\newcounter{nameOfYourChoice}
\newcommand{\Real}{\mathbb{R}}                              % Real numbers
\newcommand{\set}[1]{\left\{#1\right\}}                     % Set operator
\newcommand{\abs}[1]{\left|#1\right|}                       % Absolute value
\newcommand{\bra}[1]{\left(#1\right)}
\newcommand{\brac}[1]{\left[#1\right]}                    % Brackets
\newcommand{\blue}[1]{#1}                    % Blue text
\newcommand{\bblue}[1]{#1}
\newcommand {\beq}{\begin{equation}}
\newcommand {\eeq}{\end{equation}}
\newcommand {\beqn}{\begin{equation*}}
\newcommand {\eeqn}{\end{equation*}}
\newcommand {\bear}{\begin{eqnarray}}
\newcommand {\eear}{\end{eqnarray}}
\newcommand {\bearn}{\begin{eqnarray*}}
\newcommand {\eearn}{\end{eqnarray*}}
\newcommand{\norm}[1]{\left\lVert#1\right\rVert}
\DeclareMathOperator*{\cart}{\scalerel*{\times}{\sum}}
\DeclareMathOperator{\conv}{conv}
\DeclareMathOperator{\cone}{cone}
\DeclareMathOperator{\spann}{span}
\DeclareMathOperator{\cl}{cl}
\DeclareMathOperator{\ext}{ext}
\DeclareMathOperator{\bd}{bd}
\DeclareMathOperator{\Int}{int}
\DeclareMathOperator{\epi}{epi}
\DeclareMathOperator{\rank}{rank}
\DeclareMathOperator{\aff}{aff}
\DeclareMathOperator{\relint}{ri}
\DeclareMathOperator{\relbd}{rbd}
\DeclareMathOperator{\dom}{dom}
\newcommand {\e}{{\bf e}}
\newcommand {\bcom}{\begin{comment}}
\newcommand {\ecom}{\end{comment}}
\newcommand{\sidx}[1]{\left\llbracket     #1 \right\rrbracket}
\spnewtheorem{assumption}{Assumption}{\bf}{\it}
\begin{document}

\title{Small and Strong Formulations for Unions of Convex Sets from the Cayley Embedding}
\titlerunning{Small and Strong Formulations for Unions of Convex Sets}
\author{Juan Pablo Vielma}
\authorrunning{Juan Pablo Vielma} % abbreviated author list (for running head)
\institute{Sloan School of Management, Massachusetts Institute of Technology, Cambridge MA 02139, USA,\\
\email{jvielma@mit.edu}}
\date{Received: date / Accepted: date}

\maketitle              % typeset the title of the contribution

\begin{abstract}
There is often a significant trade-off between formulation strength and size in mixed integer programming (MIP).
When modeling convex disjunctive constraints (e.g. unions of convex sets), adding auxiliary continuous variables can sometimes help resolve this trade-off. However, \bblue{standard formulations that use such auxiliary continuous variables can have a worse-than-expected computational effectiveness, which is often attributed precisely to these auxiliary continuous variables}. For this reason, there has been considerable interest in constructing strong formulations that do not use continuous auxiliary variables. We introduce a technique to construct formulations without these detrimental continuous auxiliary variables. To develop this technique we introduce a natural  non-polyhedral generalization of the Cayley embedding of a family of polytopes and show it inherits many geometric properties of the original embedding.  We then show how the associated formulation technique can be used to construct small and strong formulation for a wide range of disjunctive constraints. In particular, we show it can recover and generalize all known strong formulations without continuous auxiliary variables.

\keywords{Mixed integer nonlinear programming; Mixed integer programming formulations; Disjunctive constraints}
\end{abstract}
\pagebreak
\section{Introduction}
Mixed integer programming (MIP) adds integrality requirements to \blue{a continuous optimization problem, which is often referred to as the continuous relaxation of the MIP problem}. \blue{MIP problems with a convex continuous relaxation}  often arise from the need to model disjunctive constraint of the form $x\in \bigcup\nolimits_{i=1}^k C^i$ where $\{C^i\}_{i=1}^k\subseteq \mathbb{R}^n$ is a family of closed convex sets. The two main classes of formulations for these constraints are the so-called \emph{Big-M} and \emph{convex hull} formulations. Big-M formulations are simple and small, but their continuous relaxations usually yield weak bounds, which can  hinder the performance of branch-and-bound based algorithms. In contrast, the convex hull formulation yields the best possible relaxation bounds for a single disjunctive constraint and normally yields strong bounds for problems with multiple constraints. Unfortunately, while convex hull formulations are only moderately larger than Big-M formulations, their computational performance is usually much worse. The folklore attributes this poor performance to certain continuous auxiliary variables used by the convex hull formulation. This has prompted significant interest on techniques to project out such variables (i.e. eliminate them without decreasing the formulation's strength). The resulting formulations can provide a significant computational advantage, but existing techniques are limited to very specific structures (e.g. see \cite{balas88,blair90,jeroslow88,Embedding,DBLP:journals/mp/VielmaN11} for polyhedra and \cite{lodi15,gunluk2010perspective,Hijazi,mohit1,mohit2} for  non-polyhedral sets). In this paper we introduce a technique  to project out the detrimental auxiliary variables  for a wide range of disjunctive constraints. In particular, the technique can be used to recover, and generalize all known results that use binary variables that add  to one (e.g. excluding the logarithmic formulation from \cite{Embedding,DBLP:journals/mp/VielmaN11}).

Our technique is based on a geometric characterization of the projection of the convex hull formulation that connects it to a natural non-polyhedral generalization of an object known as the Cayley embedding of a family of polytopes. To obtain this characterization we generalize to the non-polyhedral setting some known properties of the Cayley embedding and use it to obtain  a valid formulation of the disjunctive constraint. We then give simple sufficient conditions for this formulation to be equal to the projection of the convex hull formulation. Using these conditions we then recover and generalize all known techniques to project  the convex hull formulation. We also provide precise necessary and sufficient conditions to obtain the projection and comment on the practical implementation of the formulations. In particular, we evaluate the representation of the projection from an algebraic geometry perspective.

The paper is organized as follows. In Section~\ref{extended} we introduce a geometric abstraction that unifies all known formulations in a common framework. In Section~\ref{cayley} we introduce the geometric characterization and describe the projected convex hull formulation for two simple cases. In Section~\ref{boundary} we present the generalized properties of the Cayley embedding and the simple sufficient conditions. We then use these  conditions to recover and generalize all existing formulations in Section~\ref{applicasec}. In particular, we give  guidance on how to apply the technique, comment on its practical implementation and  present the algebraic geometry result. Finally, in Section~\ref{necandsufsec} we give detailed necessary and sufficient conditions for the technique. Omitted proofs are included in Section~\ref{proofsec}.

We use the following notation. For a function $f:\Real^n \to \Real\cup\set{\infty}$ we let its epigraph be $\epi\bra{f}:=\set{\bra{x,z}\in \Real^{n+1}\,:\, f(x)\leq z}$.  For a set $S\subseteq \Real^n$ we denote its topological closure, its convex hull, its conic hull and its affine hull by $\cl\bra{S}$, $\conv\bra{S}$, $\cone\bra{S}$ and $\aff\bra{S}$. For a closed convex  set \blue{$C\subseteq \Real^n$} we \blue{denote} its recession  cone \blue{by} $C_\infty$ \blue{ and the set containing all its extreme points by $\ext\bra{C}$}. We  let $\sidx{k}:=\set{1,\ldots,k}$,  $\e^i\in \Real^n$ be the $i$-th canonical vector,  ${\mathbf 1}\in\Real^n$ be the all ones vector and ${\mathbf 0}\in\Real^n$ be the all zeros vector (the specific dimension will be apparent from the context). For a closed convex cone $K$ we let $K^*$ be its polar cone. Finally, we let $\mathbb{Z}$ be the set of integers.
\section{MIP formulations for unions of convex sets}\label{extended}

\begin{definition}Let $\{C^i\}_{i=1}^k\subseteq \mathbb{R}^n$ be a finite family of closed convex sets and   $Q\subseteq \Real^{n+p+k}$ be a  closed convex set. We say $\bra{x,z,y}\in Q,\quad y\in \mathbb{Z}^k$ is a MIP formulation  of $x\in \bigcup\nolimits_{i=1}^k C^i$ if and only if
\beq\label{formcondition}x\in \bigcup\nolimits_{i=1}^k C^i \quad \Leftrightarrow\quad \exists\bra{z,y}\in \Real^{p}\times\mathbb{Z}^k \text{ s.t. }\bra{x,z,y}\in Q.\eeq
We refer to  $Q$ as the \emph{continuous relaxation} of the MIP formulation and say the formulation is \emph{ideal} if and only if for any minimal face $F$ of $Q$ and  $\bra{x,z,y}\in F$ we have $y\in \mathbb{Z}^k$.
\end{definition}

Existing formulations depend on specific set-representations. For instance, Balas, Jeroslow and Lowe give linear MIP formulations for polyhedra (e.g.  \cite[Section 5]{Mixed-Integer-Linear-Programming-Formulation-Techniques}),  Ben-Tal, Helton,  Nemirovski and Nie give conic MIP formulations for conic representable sets \cite{ben2001lectures,Helton} and  Ceria, Merhotra, Soares and Stubs give perspective function formulations for function level sets \cite{springerlink:10.1007/s101070050106,springerlink:10.1007/s101070050103}. To abstract the representation we use the \blue{following properties of the} gauge of a convex set (e.g. \cite{hiriart-lemarechal-2001}).
\blue{
\begin{lemma}\label{gaugepropertiesprop}Let $C\subseteq \Real^n$ be a closed convex set with ${\bf 0}\in C$ and $\gamma_C:\Real^n \to \Real \cup\set{\infty}$ be the gauge function of $C$ given by $\gamma_C\bra{x}:=\inf\set{\lambda>0\,:\, x\in \lambda C}$. Then the following properties hold.
  \begin{itemize}
    \item If $x\notin \lambda C$ for all $\lambda>0$, then  $\gamma_C\bra{x}=\infty$.
    \item The gauge function $\gamma_C$ is convex and positively homogeneous.
    \item We have that $\set{x\in \Real^n\,:\, \gamma_C\bra{x}\leq r}=rC$ and $\set{x\in \Real^n\,:\, \gamma_C\bra{x}\leq 0}=C_\infty$.
  \end{itemize}
Furthermore, if  $\tilde{C}\subseteq \Real^n$ is a closed convex set and $b\in \tilde{C}$, then \[\tilde{C}=\set{x\in \Real^n\,:\, \gamma_{\tilde{C}-b}\bra{x-b}\leq 1}.\]
\end{lemma}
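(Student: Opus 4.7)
The plan is to verify each assertion directly from the defining formula $\gamma_C(x)=\inf\set{\lambda>0\,:\, x\in \lambda C}$, relying only on the hypotheses that $C$ is closed, convex, and contains $\mathbf{0}$. A preliminary observation I would establish first is that $\lambda C\subseteq \mu C$ whenever $0<\lambda\leq \mu$: for any $c\in C$, the vector $(\lambda/\mu)c$ is a convex combination of $c\in C$ and $\mathbf{0}\in C$ and hence lies in $C$, so $\lambda c \in \mu C$. This monotonicity of the family $\set{\lambda C}_{\lambda>0}$ will be used repeatedly.

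The first bullet is just the convention $\inf\emptyset=+\infty$. Positive homogeneity follows from the bijection $\lambda=\alpha\mu$ between $\set{\lambda>0\,:\,\alpha x\in\lambda C}$ and $\alpha\cdot\set{\mu>0\,:\, x\in\mu C}$ for $\alpha>0$, giving $\gamma_C(\alpha x)=\alpha\gamma_C(x)$.

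For the sublevel set identity at $r>0$, the inclusion $rC\subseteq\set{x\,:\,\gamma_C(x)\leq r}$ is immediate. Conversely, if $\gamma_C(x)\leq r$, the monotonicity observation gives $x\in(r+\epsilon)C$ for every $\epsilon>0$; letting $\epsilon\to 0^+$ and using closedness of $C$ yields $x/r\in C$. For $r=0$, the condition $\gamma_C(x)\leq 0$ is equivalent to $x\in \lambda C$ for every $\lambda>0$, so $\set{x\,:\,\gamma_C(x)\leq 0}=\bigcap_{\lambda>0}\lambda C$, which is the standard description of the recession cone $C_\infty$ for a closed convex set containing the origin. With the sublevel sets in hand, convexity of $\gamma_C$ reduces to subadditivity: assuming $\gamma_C(x),\gamma_C(y)<\infty$, for every $\epsilon>0$ I would pick $\lambda_x,\lambda_y>0$ with $x\in\lambda_xC$, $y\in\lambda_yC$, and $\lambda_x+\lambda_y<\gamma_C(x)+\gamma_C(y)+\epsilon$; writing $(x+y)/(\lambda_x+\lambda_y)$ as a convex combination of $x/\lambda_x\in C$ and $y/\lambda_y\in C$ shows $x+y\in(\lambda_x+\lambda_y)C$, hence $\gamma_C(x+y)\leq \gamma_C(x)+\gamma_C(y)+\epsilon$; the case where either term is $+\infty$ is trivial.

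Finally, for the last assertion I would set $C:=\tilde{C}-b$, which is closed, convex, and contains $\mathbf{0}$ since $b\in\tilde{C}$. Applying the sublevel set identity at $r=1$ gives $\set{y\in\Real^n\,:\,\gamma_C(y)\leq 1}=C$, and translating by $b$ via the substitution $y=x-b$ yields the stated description of $\tilde{C}$. I expect the only moderately delicate step to be the $r=0$ case of the sublevel set identity, which is where closedness of $C$ is genuinely used through the characterization of the recession cone as $\bigcap_{\lambda>0}\lambda C$; the remaining steps are direct unpacking of the definition.
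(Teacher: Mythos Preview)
Your argument is correct. The paper does not actually prove this lemma: it is stated as a collection of standard gauge-function properties with a citation to Hiriart-Urruty and Lemar\'echal's \emph{Fundamentals of Convex Analysis}, so there is no ``paper's own proof'' to compare against. Your self-contained verification directly from the definition is exactly what one would write to make the paper independent of that reference.

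One small remark on your closing comment: closedness of $C$ is in fact used in the $r>0$ case as well, not only at $r=0$. When $\gamma_C(x)=r$ exactly, you pass from $x/(r+\epsilon)\in C$ for all $\epsilon>0$ to $x/r\in C$ by taking a limit, and that step requires $C$ closed. So both sublevel-set identities rely on closedness; the $r=0$ case additionally requires it through the recession-cone characterization $C_\infty=\bigcap_{\lambda>0}\lambda C$, which is where closedness enters in the form you single out.
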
}

 Using gauge functions we can construct a generic versions of standard  formulations for  convex sets that satisfy the following assumption.

\begin{definition}\label{pointedrecessionassumption} We say $\mathcal{C}:=\set{C^i}_{i=1}^k\in \mathbb{C}_n$ if and only if $C^i\subseteq \Real^n$ is a non-empty closed convex set for all $i\in \sidx{k}$ and $C^i_\infty=C^j_\infty$ for all $i,j\in \sidx{k}$.
\end{definition}

\begin{restatable}{theorem}{extendedTheoremthm}\label{extendedTheorem} Let $\set{b^i}_{i=1}^k\subseteq \Real^n$ and $\mathcal{C}:=\set{C^i}_{i=1}^k\in \mathbb{C}_n$ be such that $b^i\in C^i$ for all $i\in \sidx{k}$, then an ideal formulation for $\bigcup_{i=1}^k C^i $ is given by
\begin{subequations}\label{extendedformulation}
\begin{alignat}{3}
\gamma_{C^i-b^i}\bra{x^i-b^i y_i}\leq y_i, \quad\forall  i\in \sidx{k}, \quad
  \sum\nolimits_{i=1}^k y_i&=1, \quad y\in \set{0,1}^k.\\
  \sum\nolimits_{i=1}^k x^i &=x.
\end{alignat}
\end{subequations}
In particular, if $\ext\bra{C^i}\neq \emptyset $ for all $i\in \sidx{k}$, then the continuous relaxation of \eqref{extendedformulation} is line-free and all of its extreme points have integral $y$ components.
\end{restatable}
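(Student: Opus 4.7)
The plan is to establish the validity of \eqref{extendedformulation} first, then show its continuous relaxation $Q$ is ideal, and finally deduce the line-free addendum as a corollary of the idealness argument.

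For validity one verifies both directions of \eqref{formcondition}. For the forward direction, pick $i^*$ with $x\in C^{i^*}$ and set $y=\e^{i^*}$, $x^{i^*}=x$, $x^j=\mathbf 0$ for $j\neq i^*$; the constraints collapse to $\gamma_{C^{i^*}-b^{i^*}}(x-b^{i^*})\leq 1$ and $\gamma_{C^j-b^j}(\mathbf 0)\leq 0$, both immediate from Lemma~\ref{gaugepropertiesprop}. For the reverse direction, $\sum_i y_i=1$ and $y\in\set{0,1}^k$ pick a unique $i^*$ with $y_{i^*}=1$; the third bullet of Lemma~\ref{gaugepropertiesprop} then yields $x^{i^*}\in C^{i^*}$ and $x^j\in C^j_\infty$ for $j\neq i^*$, and the common-recession-cone hypothesis of Definition~\ref{pointedrecessionassumption} gives $x=x^{i^*}+\sum_{j\neq i^*} x^j\in C^{i^*}+C^{i^*}_\infty=C^{i^*}$.

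The central ingredient for idealness is the homogenized cone
\[
\hat C^i \eqq \set{(\xi,\eta)\in\Realnone : \gamma_{C^i-b^i}(\xi-b^i\eta)\leq \eta},
\]
which is a closed convex cone by positive homogeneity of the gauge, with $\eta$-slice equal to $\eta C^i$ for $\eta>0$ and to $C^i_\infty$ for $\eta=0$. In these terms the continuous relaxation becomes $Q=\set{(x,(x^i)_{i=1}^k,y) : (x^i,y_i)\in\hat C^i\ \forall i,\ \sum_i x^i=x,\ \sum_i y_i=1}$. Suppose for contradiction that a minimal face $F$ of $Q$ contains a point $q^*=(x^*,(x^{*,i})_{i=1}^k,y^*)$ with $y^*\notin\set{0,1}^k$. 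Then there exist indices $i_1\neq i_2$ with $y^*_{i_1},y^*_{i_2}\in(0,1)$, and $c^i\eqq x^{*,i}/y^*_i\in C^i$ for $i\in\set{i_1,i_2}$ by the cone structure. Take the direction $d$ whose only nonzero components are $d^{i_1}=c^{i_1}$, $d^{i_2}=-c^{i_2}$, $d_x=c^{i_1}-c^{i_2}$, $d_{y_{i_1}}=1$, $d_{y_{i_2}}=-1$. For $\abs{\alpha}<\min(y^*_{i_1},y^*_{i_2})$, both $q^*\pm\alpha d$ lie in $Q$, because the perturbed pair for $i\in\set{i_1,i_2}$ equals $(y^*_i\pm\alpha)(c^i,1)$, a nonnegative multiple of $(c^i,1)\in\hat C^i$. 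The face property then forces $q^*\pm\alpha d\in F$, so $d\in\aff(F)-q^*$. However, the implicit $\eta\geq 0$ in $\hat C^i$ forces $\lin(\hat C^i)\subseteq\set{\eta=0}$, so $\lin(Q)$ has vanishing $y$-component; since a minimal face of a closed convex set is a translate of $\lin(Q)$, this conflicts with $d_{y_{i_1}}=1$.

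The line-free addendum follows at once: if $\ext(C^i)\neq\emptyset$ then $C^i$ is line-free, hence $\lin(\hat C^i)=\set{\mathbf 0}$ by Lemma~\ref{gaugepropertiesprop}, and consequently $\lin(Q)=\set{\mathbf 0}$; the minimal faces of $Q$ are then its extreme points, all having integral $y$ by the preceding argument. The principal obstacle is the lineality-space computation in the non-polyhedral setting, together with identifying minimal faces as translates of $\lin(Q)$; both reduce cleanly to standard convex-analysis facts plus Lemma~\ref{gaugepropertiesprop} applied symmetrically to $(\xi,\eta)$ and $(-\xi,-\eta)$ in $\hat C^i$.
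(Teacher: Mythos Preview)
Your proof is correct and follows essentially the same route as the paper. Both arguments establish validity via Lemma~\ref{gaugepropertiesprop} and the common-recession-cone hypothesis, and for idealness both perturb a fractional point $q^*$ along the direction obtained by scaling the $i_1$- and $i_2$-blocks (your $d$ with $(d^{i},d_{y_i})=(c^i,1)$ for $i=i_1$ and $(-c^i,-1)$ for $i=i_2$), using positive homogeneity of the gauge to keep both perturbed points feasible. The only cosmetic difference is in how the contradiction is closed: the paper pushes the perturbation until one $y$-coordinate hits $0$ or $1$ and then exhibits a strictly smaller face of $F$, whereas you invoke the standard fact that a minimal face of a closed convex set is a translate of its lineality space and observe that $\lin(Q)$ has vanishing $y$-component. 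Both endings are clean; yours is slightly more abstract but saves the explicit smaller-face construction. The line-free computation is the same in both.
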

The proof of Theorem~\ref{extendedTheorem} is analogous to existing formulations, but for completeness we include a proof in Section~\ref{Moreongaugesec}.

A key to obtain the relatively simple and small ideal formulation \eqref{extendedformulation} is the use of the $k$ copies $x^i$ of the original variables $x$. \bblue{Unfortunately, these variable copies induce a block structure that the MIP folklore identifies as a source of the worse-than-expected computational performance of formulation \eqref{extendedformulation}}. For this reason, simpler Big-M formulations are often preferred in practice, even though they usually fail to be ideal. We can abstract the specific structure of such Big-M formulations using gauge functions as follows.
\begin{restatable}{theorem}{BigMTheoremthm}\label{BigMextendedTheorem} Let $\set{b^i}_{i=1}^k\subseteq \Real^n$ and $\mathcal{C}:=\set{C^i}_{i=1}^k\in \mathbb{C}_n$ be such that $b^i\in C^i$ for all $i\in \sidx{k}$, and $M\in \Real^{k\times k}$ be such that $M_{i,i}=1$ for all $i\in \sidx{k}$ and
$C^j\subseteq \set{x\in \Real^n\,:\, \gamma_{C^i-b^i}\bra{x-b^i } \leq M_{i,j}}$
for all $i,j\in \sidx{k}$. Then a formulation for $\bigcup_{i=1}^k C^i $ is given by
\begin{equation}\label{bigMformulation}
\gamma_{C^i-b^i}\bra{x-b^i }\leq \sum\nolimits_{j=1}^k M_{i,j} y_j, \;\forall i\in \sidx{k},\quad
  \sum\nolimits_{i=1}^k y_i=1,\; y\in \set{0,1}^k.
\end{equation}
The strength of this formulation depended on $M$ with the strongest formulation being obtained for the smallest valid coefficients.
\end{restatable}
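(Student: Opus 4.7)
The plan is to verify that formulation \eqref{bigMformulation} satisfies the equivalence \eqref{formcondition} by checking both implications, and then to justify the strength claim as a monotonicity statement in the coefficient matrix $M$.

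For the forward implication, I would start from $x\in \bigcup_{i=1}^k C^i$, pick any index $j$ with $x\in C^j$, and set $y_j=1$ and $y_l=0$ for $l\neq j$. The cardinality and integrality constraints are then immediate. For each $i\in\sidx{k}$ the right-hand side of the gauge inequality collapses to $M_{i,j}$, and the hypothesis $C^j\subseteq \{x\in\Real^n\,:\,\gamma_{C^i-b^i}(x-b^i)\leq M_{i,j}\}$ gives the required inequality directly.

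For the reverse implication, suppose $(x,y)$ is feasible for \eqref{bigMformulation}. Since $y\in\{0,1\}^k$ with $\sum_{i=1}^k y_i=1$, there is a unique index $j$ with $y_j=1$ and $y_l=0$ for $l\neq j$. The gauge inequality for $i=j$ then reads $\gamma_{C^j-b^j}(x-b^j)\leq M_{j,j}=1$, and the last clause of Lemma~\ref{gaugepropertiesprop} (with $\tilde{C}=C^j$ and $b=b^j\in C^j$) yields $x\in C^j\subseteq \bigcup_{i=1}^k C^i$, as required.

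For the strength statement, I would note that the continuous relaxation of \eqref{bigMformulation} is monotone in $M$: decreasing any entry $M_{i,j}$ tightens the corresponding gauge inequality on the face $\{y_j>0\}$ without affecting any other constraint, so the feasible region in $(x,y)$ shrinks. The validity hypothesis amounts to $M_{i,j}\geq \sup\{\gamma_{C^i-b^i}(z-b^i)\,:\,z\in C^j\}$, so the coefficientwise smallest valid choice is $M_{i,j}^{\star}:=\sup\{\gamma_{C^i-b^i}(z-b^i)\,:\,z\in C^j\}$, which yields the strongest formulation of this type. The only mildly delicate point is handling the diagonal entries, where $M_{i,i}=1$ is consistent with this recipe by the gauge identity $\sup\{\gamma_{C^i-b^i}(z-b^i)\,:\,z\in C^i\}=1$ from Lemma~\ref{gaugepropertiesprop}, so no conflict with the normalization $M_{i,i}=1$ arises. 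No obstacle beyond this bookkeeping is expected, since the argument reduces to plugging values into the definitions and invoking the gauge properties already established.
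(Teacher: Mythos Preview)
Your argument is correct. The paper does not include an explicit proof of Theorem~\ref{BigMextendedTheorem}; it is stated without proof, presumably because validity follows directly from Lemma~\ref{gaugepropertiesprop} exactly as you outline. Your two implications and the monotonicity observation for the strength claim are precisely the natural verification the paper takes for granted.

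One minor imprecision: the identity $\sup\{\gamma_{C^i-b^i}(z-b^i):z\in C^i\}=1$ need not hold in general (e.g., if $C^i=\{b^i\}$ the supremum is $0$); Lemma~\ref{gaugepropertiesprop} only gives the inequality $\leq 1$. This does not affect your argument, since the theorem \emph{imposes} $M_{i,i}=1$ as a normalization rather than deriving it, and $M_{i,i}=1\geq M_{i,i}^\star$ is always a valid choice. You may want to rephrase that sentence to say $M_{i,i}=1$ is always valid (rather than optimal) for the diagonal.
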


The abstraction provided by the use of gauge functions allow us to focus on the geometric structure of the formulations. However, it does not provide an explicit representation of the formulations that can be easily fed to a MIP solver. Fortunately, we can use known properties of gauge functions to obtain practical representation of various classes and recover existing formulations.

\begin{restatable}{lemma}{gaugelemmalem}\label{gaugelemma} Let $C\subseteq \Real^n$ be closed and convex with ${\bf 0}\in C$ and $E\subseteq \Real^{n}\times \Real_+$ be a closed convex cone. Then $E=\epi\bra{\gamma_C}$ if and only if $C=\set{x\in \Real^n\,:\, \bra{x,1}\in E}$ and  $C_\infty=\set{x\in \Real^n\,:\, \bra{x,0}\in E} $. In particular,
\begin{enumerate}
\item\label{gaugelemmaone} If $C:=\set{x\in \Real^n\,:\, \exists z\in \Real^p\text{ s.t. } A x +B z +c \in K}$ for a closed convex cone $K\subseteq \Real^m$, matrices $A\in \mathbb{R}^{m\times n}$ and $B\in \mathbb{R}^{m\times p}$,   and vector $c\in \Real^m$, then $\epi\bra{\gamma_C}=\set{\bra{x,y}\in \Real^{n+1}\,:\, \exists z\in \Real^p\text{ s.t. } A x +B z +c y \in K,\quad y\geq 0}$,
\item\label{gaugelemmathree} if $f:\Real^n \to \Real\cup \set{\infty}$ is a closed convex function, $(\cl \tilde{f})(x,y)$ is the closure of the perspective function of $f$, and $C:=\set{x\in \Real^d\,:\, f(x)\leq 0}$, then  $\epi\bra{\gamma_C}=\{(x,y)\in \Real^d\times \Real_+\,:\, (\cl \tilde{f})(x,y)\leq 0\}$,
\item if $b\in C$, then $\epi\bra{\gamma_{C-b}}=\set{\bra{x,y}\in \Real^{n+1}\,:\, \gamma_{C}\bra{x+b y}\leq y}$, and
\item \label{propintersect}If ${\bf 0}\in C^1\cap C^2$ then, $\epi\bra{\gamma_{C^1\cap C^2}}=\epi\bra{\gamma_{C^1}}\cap \epi\bra{\gamma_{C^2}}$.
\end{enumerate}
\end{restatable}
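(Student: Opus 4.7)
The plan is first to establish the main bi-implication as a characterization of $\epi\bra{\gamma_C}$ among closed convex cones in $\Real^n\times\Real_+$, and then to derive items (1)--(4) by verifying in each case that the candidate set is such a cone with the prescribed slices at $y=1$ and $y=0$.

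For the characterization, the forward direction $E=\epi\bra{\gamma_C}\Rightarrow$ (slice conditions) is immediate from the third bullet of Lemma~\ref{gaugepropertiesprop}: $\set{x : \gamma_C\bra{x}\le 1}=C$ and $\set{x : \gamma_C\bra{x}\le 0}=C_\infty$. For the reverse direction, I would use the cone structure to rescale: for any $y>0$, $\bra{x,y}\in E$ iff $\bra{x/y,1}\in E$ iff $x/y\in C$ iff $\gamma_C\bra{x}\le y$ by positive homogeneity; and for $y=0$, the slice condition together with $\gamma_C\ge 0$ gives $\bra{x,0}\in E$ iff $x\in C_\infty$ iff $\gamma_C\bra{x}\le 0$. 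Since $E\subseteq \Real^n\times\Real_+$ by hypothesis and $\epi\bra{\gamma_C}\subseteq \Real^n\times\Real_+$ because $\gamma_C\ge 0$, equality follows.

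For the four numbered items the same routine applies: present the right-hand side as a closed convex cone in $\Real^n\times\Real_+$ and check the slice at $y=1$ is $C$ and the slice at $y=0$ is $C_\infty$. In item~(1), the set is a convex cone by linearity of $Ax+Bz+cy$ and conicity of $K$, and its $y=1$ slice is $C$ by definition. In item~(2), sublinearity and lower semicontinuity of the closure of the perspective function make the candidate a closed convex cone in $\Real^n\times\Real_+$, and its $y=1$ slice is $\set{x : f\bra{x}\le 0}=C$. Item~(3) writes the slices explicitly: at $y=1$ we obtain $\set{x : \gamma_C\bra{x+b}\le 1}=C-b$ and at $y=0$ we obtain $C_\infty=\bra{C-b}_\infty$, with positive homogeneity of $\gamma_C$ giving the cone property. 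Item~(4) uses that intersections of closed convex cones are again such, that slices commute with intersections, and that $\bra{C^1\cap C^2}_\infty=C^1_\infty\cap C^2_\infty$ when both sets are closed convex and share a common point.

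The main obstacle is the $y=0$ slice in items (1) and (2), where a recession-cone identification is needed. For (1), this amounts to showing that the projection onto $x$ of $\set{\bra{x,z} : Ax+Bz\in K}$ equals $C_\infty$; the natural route is the standard fact that the recession cone of a projection equals the projection of the recession cone under closedness of the representation, taking closures where necessary. For (2), the identification rests on $\bra{\cl\tilde{f}}\bra{x,0}$ coinciding with the recession function of $f$ in direction $x$, together with the fact that its nonpositivity set equals $C_\infty$ whenever $C=\set{x : f\bra{x}\le 0}$ is nonempty. Both are standard facts from convex analysis; once in hand, the closed-cone characterization established in the first step dispatches each of the four items uniformly.
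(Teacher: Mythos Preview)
The paper does not actually give a proof of this lemma; it is presented as a compendium of ``known properties of gauge functions'' and is never revisited in Section~\ref{proofsec}. So there is nothing to compare against line by line.

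That said, your approach is exactly the natural one and is how one would reconstruct the argument: prove the biconditional characterization first via positive homogeneity and the level-set identities of Lemma~\ref{gaugepropertiesprop}, and then discharge each of (1)--(4) by exhibiting the right-hand side as a convex cone in $\Real^n\times\Real_+$ with the correct $y=1$ and $y=0$ slices. Your identification of the only nontrivial step --- the $y=0$ slice in (1) and (2), i.e.\ matching the homogenized description with $C_\infty$ --- is accurate. For (2) this is precisely the standard fact that $(\cl\tilde f)(\cdot,0)$ is the recession function of $f$, whose nonpositivity set is the recession cone of $\set{f\le 0}$. For (1) one direction (projected recession directions are recession directions of the projection) is immediate from conicity of $K$; the reverse inclusion can fail without a closedness assumption on the projected representation, which is why your caveat ``taking closures where necessary'' is appropriate --- the lemma's preamble already assumes $C$ is closed, and in all of the paper's applications the conic representations are chosen so that the projected cone is closed. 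With that understood, your proposal is complete.
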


The following example illustrates Lemma~\ref{gaugelemma} and Theorems~\ref{extendedTheorem} and \ref{BigMextendedTheorem}.
\begin{example}\label{example1}
Let $C^1=\{x\in \Real^2\,:\, \bra{2- s_1  x_1 }\bra{2- s_2  x_2 }\geq 1 \quad \forall s\in \set{-1,1}^2\}$ and $C^2=[-5/4,5/4]^2$ \blue{be the sets depicted in Figure~\ref{ex1fig}}. Using standard conic representability results for the cone $\mathbf{L}^n:=\set{\bra{x,x_0}\in \mathbb{R}^{n+1}\,:\,\norm{x}_2\leq x_0}$ (e.g. \cite{ben2001lectures}) we have
\begin{alignat*}{3}
C^1
& =\set{x\in \Real^2\,:\, \norm{\bra{2, s_1 x_1-s_2 x_2}}_2\leq 4-s_1x_1-s_2x_2 \quad \forall s\in \set{-1,1}^2}.
\end{alignat*}
Using Lemma~\ref{gaugelemma}  we have $\gamma_{C^1}\bra{x}\leq y$ if and only if
\[\norm{\bra{2y , s_1 x_1-s_2 x_2}}_2\leq 4y-s_1 x_1-s_2 x_2  \quad \forall s\in \set{-1,1}^2\]
Similarly $\gamma_{C^2}\bra{x}\leq y$ if and only if
$-\bra{5/4}y \leq x_j\leq \bra{5/4}y \quad \forall j\in \sidx{2}$. Then Theorem~\ref{extendedTheorem} yields the ideal formulation for $\bigcup_{i=1}^2 C^i $ given by
\begin{subequations}\label{extendedformulationfirstex}
\begin{alignat}{5}
\norm{ \bra{2 y_1, s_1 x_1^1-s_2 x_2^1 }}_2&\leq  4y_1 - s_1x_1^1 - s_2x_2^1 &\quad& \forall s\in \set{-1,1}^2\\
-\bra{5/4}y_2 \leq x_j^2&\leq \bra{5/4}y_2  &\quad&  \forall j\in \sidx{2}\\
x^1+x^2 =x,\quad y_1+y_2=1,\quad y &\in \set{0,1}^2. &\quad&
\end{alignat}
\end{subequations}
Alternatively, we can use Theorem~\ref{BigMextendedTheorem} to obtain the formulation given by
\begin{subequations}\label{extendedformulationfirstexbigm}
\begin{alignat}{5}
\norm{ \bra{2 \bra{y_1+M_{1,2} y_2}, s_1 x_1-s_2 x_2 }}_2&\leq  4\bra{y_1+M_{1,2} y_2}\notag\\&\quad - s_1x_1 - s_2x_2 &\quad& \forall s\in \set{-1,1}^2\\
-M_{2,1}y_1-\bra{5/4}y_2 \leq x_j&\leq M_{2,1}y_1+\bra{5/4}y_2  &\quad&  \forall j\in \sidx{2}\\
 y_1+y_2=1,\quad y &\in \set{0,1}^2. &\quad&
\end{alignat}
\end{subequations}
The smallest Big-M values that make this formulation valid are $M_{1,2}=5/4$ and $M_{2,1}=3/2$. Unfortunately, we can check that for all $t\in (0,1)$ the point $\bra{\bar{x}(t),\bar{y}(t)}$ with $\bar{x}(t)=\bra{(5 + t)/4,(5/4)(5 - t) (t-1)/(3t-5)}$ and $\bar{y}(t)=\bra{t,1-t}$ is an extreme point of the continuous relaxation of \eqref{extendedformulationfirstexbigm} with  fractional $y$ components. Furthermore, $\bar{x}(t)\notin \conv\bra{C^1\cup C^2}$ for all $t\in (0,1)$.\qed

\end{example}
\begin{figure}[htpb]
\centering
\includegraphics[scale=.5]{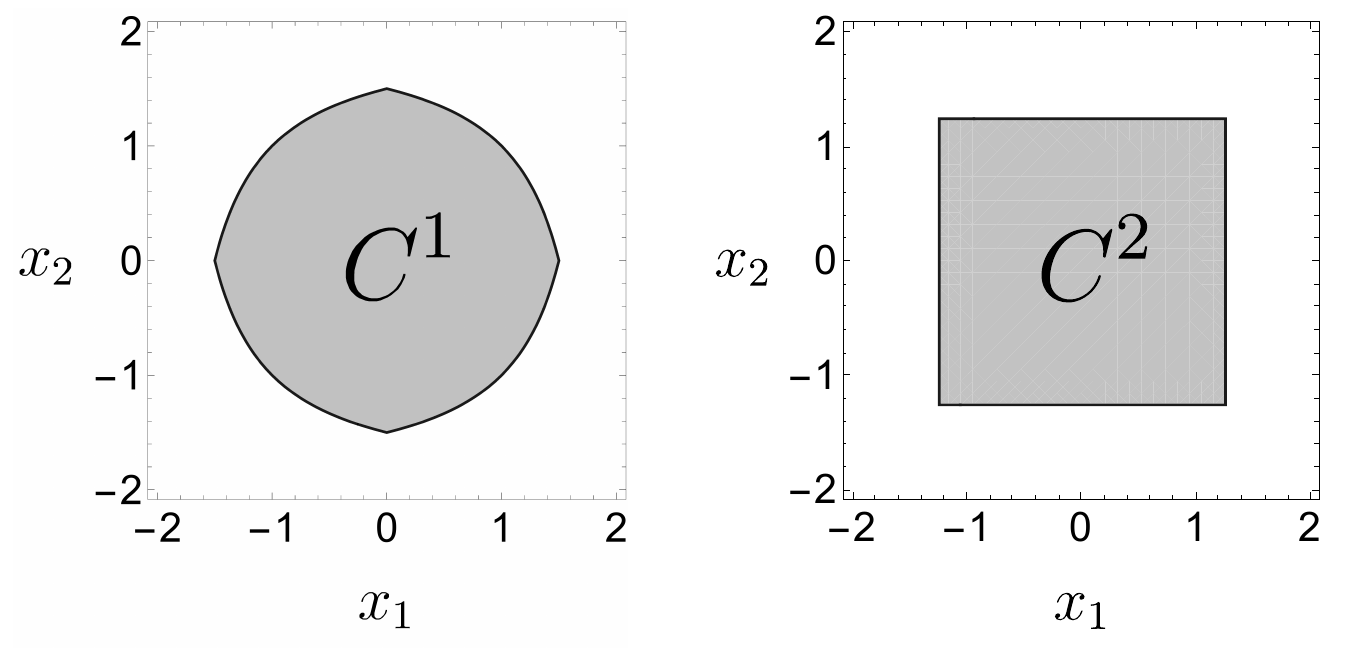}
\caption{Sets from Example~\ref{example1}.}\label{ex1fig}
\end{figure}
 An ideal formulation without the variable copies can be obtained by projecting \eqref{extendedformulation} onto the the $x$ and $y$ variables, but characterizing such projection can be challenging.  However, an effective characterization   can lead to significant computational improvements \cite{lodi15,gunluk2010perspective,Hijazi,Embedding,DBLP:journals/mp/VielmaN11}. Unfortunately, there are only few general 	techniques to obtain these characterizations.
One of the most general results by Balas, Blair and Jeroslow \cite{balas88,blair90,jeroslow88} considers unions of polyhedra with a common  geometric structure (See Proposition~\ref{blairprop} in Section~\ref{polysec}). In contrast, non-polyhedral results  require  more structure and fall into two classes. The first class considers convex sets contained in orthogonal spaces \cite{mohit2} and can be stated in our gauge notation as follows.
\begin{theorem}[\cite{mohit2}]\label{originalorthotheo}Let $\set{b^i}_{i=1}^k\subseteq \Real^n$, $\set{C^i}_{i=1}^k$ be a finite family of compact convex sets in $\Real^n$ and $\set{J_i}_{i=1}^k$ be disjoint sets such that $\bigcup_{i=1}^k J_i=\sidx{n}$ and for all $i\in \sidx{k}$ we have $b^i\in C^i$ and $C^i\subseteq \set{x\in \Real^n\,:\, x_j=0\quad \forall j\in  \sidx{n}\setminus J_i}$. Then an ideal formulation for $x\in \bigcup_{i=1}^k C^i$ is given by
\begin{equation}
\gamma_{C^i-b^i}\bra{ \brac{x- b^i y_i}_{J_i} }\leq y_i\quad\forall i\in \sidx{k},\quad
\sum\nolimits_{i=1}^k y_i=1,\quad y\in \set{0,1}^k.
 \end{equation}
  where for  $a \in \Real^n$ and $J\subseteq \sidx{n}$ we let $\brac{a}_J\in \Real^n$ be such that $\bra{\brac{a}_J}_j=a_j$ if $j\in J$ and $\bra{\brac{a}_J}_j=0$ otherwise.
\end{theorem}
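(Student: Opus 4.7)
The plan is to deduce Theorem~\ref{originalorthotheo} from the already-established Theorem~\ref{extendedTheorem} by showing that, under the orthogonality hypothesis, the auxiliary copies $x^1,\dots,x^k$ in the ideal formulation \eqref{extendedformulation} are completely determined by $x$ and can be substituted out. Since each $C^i$ is compact, $C^i_\infty=\set{{\bf 0}}$, so $\mathcal{C}\in\mathbb{C}_n$, and compactness also gives $\ext\bra{C^i}\neq \emptyset$; thus Theorem~\ref{extendedTheorem} produces an ideal formulation in the copies $x^i$ that only needs to be rewritten.

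The heart of the argument is a locking property for the gauge. Let $V_i:=\set{v\in \Real^n\,:\, v_j=0\ \forall j\in \sidx{n}\setminus J_i}$ denote the coordinate subspace indexed by $J_i$. Since $b^i\in C^i\subseteq V_i$, we have $C^i-b^i\subseteq V_i$, and hence $\lambda\bra{C^i-b^i}\subseteq V_i$ for every $\lambda>0$. By Lemma~\ref{gaugepropertiesprop}, $\gamma_{C^i-b^i}\bra{w}=\infty$ whenever $w\notin V_i$, so the constraint $\gamma_{C^i-b^i}\bra{x^i-b^i y_i}\leq y_i$ forces $x^i-b^i y_i\in V_i$ at every feasible point; combined with $b^i\in V_i$ this yields $x^i\in V_i$.

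Combining $x^i\in V_i$ with $\sum_{i=1}^k x^i=x$ and the fact that $\set{J_i}_{i=1}^k$ partitions $\sidx{n}$ (so the subspaces $V_i$ are pairwise orthogonal with direct sum $\Real^n$), the decomposition is unique and satisfies $x^i=\brac{x}_{J_i}$. Using $b^i=\brac{b^i}_{J_i}$, this substitution turns $x^i-b^i y_i$ into $\brac{x-b^i y_i}_{J_i}$, so \eqref{extendedformulation} becomes exactly the system in Theorem~\ref{originalorthotheo}; conversely, lifting any feasible $(x,y)$ via $x^i:=\brac{x}_{J_i}$ produces a feasible point of \eqref{extendedformulation}, establishing \eqref{formcondition}. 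The affine map $\Phi\bra{x^1,\dots,x^k,y}:=\bra{\sum\nolimits_{i=1}^k x^i,y}$ is therefore a bijection between the two continuous relaxations with linear inverse $\bra{x,y}\mapsto \bra{\brac{x}_{J_1},\dots,\brac{x}_{J_k},y}$, so it preserves minimal faces and transfers the ideal property from \eqref{extendedformulation}.

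The main obstacle is precisely the locking step: one must verify that $C^i-b^i$ is genuinely confined to $V_i$ so that the gauge is infinite off $V_i$, which hinges on $b^i\in V_i$. Once this is in hand, everything else is bookkeeping via the partition structure.
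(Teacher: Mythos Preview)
Your argument is correct. The paper does not actually prove Theorem~\ref{originalorthotheo}; it is quoted as a known result from \cite{mohit2}. The paper does, however, prove the generalization in Proposition~\ref{generalorthprop}, and the strategy there is the same as yours: one shows that the proposed description coincides with the projection onto $(x,y)$ of the continuous relaxation of \eqref{extendedformulation}, and then invokes Proposition~\ref{embpart} to conclude idealness. Your version is the special case where the orthogonality forces the projection to be an affine isomorphism rather than a genuine many-to-one projection, which lets you transfer idealness directly from Theorem~\ref{extendedTheorem} by a face-preserving linear bijection instead of going through $Q(\mathcal{C})$; both routes rest on the same ``locking'' observation that $\gamma_{C^i-b^i}$ is $+\infty$ off the coordinate subspace $V_i$.
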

The second class considers   sets with certain monotonicity properties and generalizes ``on/off'' constraints \cite{gunluk2010perspective,lodi15,Hijazi}.
\begin{theorem}[\cite{lodi15,Hijazi}]\label{isotone} Let $G^1,G^2\subseteq \Real^n$ be a closed convex sets such that $G^1_\infty= \Real^n_-$ and $G^2_\infty= \Real^n_+$.  Furthermore,  for each $i\in \sidx{2}$, let $l^i,u^i\in \Real^n$ and
$C^i:=\set{x\in G^i\,:\, l_j^i\leq x_j\leq u_j^i \quad \forall j\in \sidx{n}}$  be such that $l^i_j=\min\set{x_j\,:\, x\in C^i}$ and $u^i_j=\max\set{x_j\,:\, x\in C^i}$ for all  $j\in \sidx{n}$, $b^1=l^1$ and $b^2=u^2$.
Then an ideal formulation for $x\in C^1\cup C^2$ is given by
\begin{subequations}\label{isotoneform}
\begin{alignat}{3}
\gamma_{G^i-b^i}\hspace{-0.04in}\bra{\brac{x-l^1y_1- u^2 y_2}_{ J}}&\leq y_i,\; &\quad&\forall i\in \sidx{2},\;J\subseteq \sidx{n}\label{isotoneformexp}\\
y_1 l^1_j +y_2 l^2_j \leq x_j &\leq y_1 u^1_j +y_2 u^2_j,&\quad& \forall j\in \sidx{n}\\
y_1+y_2=1,\quad y&\in \set{0,1}^2.
 \end{alignat}
 \end{subequations}
\end{theorem}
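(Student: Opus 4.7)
The plan is to derive \eqref{isotoneform} as the projection onto $(x,y)$ of the ideal extended formulation from Theorem~\ref{extendedTheorem} applied to $\mathcal{C}=\set{C^1,C^2}$ with $b^1=l^1$, $b^2=u^2$, and then inherit idealness through the projection. Each $C^i=G^i\cap[l^i,u^i]$ is compact (so $\ext(C^i)\neq\emptyset$) and contains $b^i$ (using $G^i_\infty$ to slide from any point of $C^i$ to $b^i$), so Theorem~\ref{extendedTheorem} applies. Writing $C^i-b^i=(G^i-b^i)\cap([l^i,u^i]-b^i)$ and invoking Lemma~\ref{gaugelemma}(\ref{propintersect}) lets me split $\gamma_{C^i-b^i}(x^i-b^iy_i)\leq y_i$ into $\gamma_{G^i-b^i}(x^i-b^iy_i)\leq y_i$ together with the coordinatewise constraints $l^i_j y_i\leq x^i_j\leq u^i_j y_i$. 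After the change of variables $u\eqq x^1-l^1y_1$, $w\eqq x^2-u^2y_2$, the extended formulation becomes: $u+w=x-l^1y_1-u^2y_2\eqq \bar x$; $0\leq u_j\leq(u^1_j-l^1_j)y_1$; $(l^2_j-u^2_j)y_2\leq w_j\leq 0$; $\gamma_{G^1-l^1}(u)\leq y_1$; $\gamma_{G^2-u^2}(w)\leq y_2$; $y_1+y_2=1$; $y\in[0,1]^2$.

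Next I project out $(u,w)$. Feasibility of the intervals for $(u_j,w_j)$ under $u_j+w_j=\bar x_j$ is equivalent to $y_1l^1_j+y_2l^2_j\leq x_j\leq y_1u^1_j+y_2u^2_j$, which produces the second family of constraints in \eqref{isotoneform}. The gauge part is the crux. Because $(G^1)_\infty=\Real^n_-$ the gauge $\gamma_{G^1-l^1}$ is coordinatewise nondecreasing, and because $(G^2)_\infty=\Real^n_+$ the gauge $\gamma_{G^2-u^2}$ is coordinatewise nonincreasing. Defining the sign-pattern sets $J^+\eqq\set{j:\bar x_j\geq 0}$ and $J^-\eqq\set{j:\bar x_j\leq 0}$, the constraints $u\geq\mathbf{0}$ and $w\leq\mathbf{0}$ together with $u+w=\bar x$ force $u\geq[\bar x]_{J^+}$ and $w\leq[\bar x]_{J^-}$, so by monotonicity any feasible $(u,w)$ yields $\gamma_{G^1-l^1}([\bar x]_{J^+})\leq y_1$ and $\gamma_{G^2-u^2}([\bar x]_{J^-})\leq y_2$. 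Conversely, the choice $u^*\eqq[\bar x]_{J^+}$, $w^*\eqq[\bar x]_{J^-}$ lies in the two boxes precisely when the bound constraints above hold, so those two gauge inequalities suffice. Finally, for every $J\subseteq\sidx{n}$ one has $[\bar x]_J\leq[\bar x]_{J^+}$ and $[\bar x]_J\geq[\bar x]_{J^-}$, so the same monotonicity shows that the enumerated ``for all $J$'' constraints in \eqref{isotoneformexp} are equivalent to the tight constraints at the a priori unknown $J^+, J^-$; the enumeration is the sign-pattern-free way of encoding them.

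Idealness of \eqref{isotoneform} is then inherited from Theorem~\ref{extendedTheorem} via a standard projection argument. Since the continuous relaxation $Q$ of the extended formulation is line-free, so is every fiber $Q\cap\proj^{-1}(x^*,y^*)$, and any extreme point $(x^*,y^*)$ of the projected relaxation $\tilde Q$ therefore admits a lift to an extreme point of the fiber, which is in turn extreme in $Q$: any nontrivial decomposition in $Q$ would project to a nontrivial decomposition of $(x^*,y^*)$ in $\tilde Q$, contradicting extremality unless both pieces lie in the fiber, where they must coincide. Theorem~\ref{extendedTheorem} then gives $y^*\in\set{0,1}^2$. The main obstacle in turning this plan into a full proof is the monotonicity step: one must simultaneously control both gauges at the complementary sign patterns $J^+, J^-$ and certify that the single assignment $(u^*,w^*)=([\bar x]_{J^+},[\bar x]_{J^-})$ clears every box constraint, which is precisely what makes \eqref{isotoneform} a valid projection of the extended formulation without continuous auxiliary copies of $x$.
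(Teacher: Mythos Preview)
Your argument is correct. You take a direct primal route: start from the extended formulation of Theorem~\ref{extendedTheorem}, split each $\gamma_{C^i-b^i}$ via Lemma~\ref{gaugelemma}(\ref{propintersect}) into the gauge of $G^i-b^i$ and box constraints, and then explicitly project out the variable copies using the coordinatewise monotonicity of $\gamma_{G^1-l^1}$ and $\gamma_{G^2-u^2}$ induced by the recession-cone hypotheses. The key observation---that the optimal split of $\bar x$ into a nonnegative part $u^*=[\bar x]_{J^+}$ and a nonpositive part $w^*=[\bar x]_{J^-}$ simultaneously minimizes both gauges and remains feasible for the boxes whenever the projected bound constraints hold---is exactly right, and the enumeration over all $J$ is correctly identified as the sign-pattern-agnostic encoding of the two tight constraints. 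Your lifting argument for idealness is standard and fine; you could also shortcut it by invoking Proposition~\ref{embpart}(\ref{embpart3}) and (\ref{embpart44}), since you have shown that the continuous relaxation of \eqref{isotoneform} coincides with $Q(\mathcal{C})$.

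The paper does not prove Theorem~\ref{isotone} directly (it is cited from the literature), but it recovers and generalizes it through a genuinely different, dual/geometric mechanism: the proof of Theorem~\ref{isotonegeneral} decomposes $Q(\mathcal{C})$ as $\bigcap_{t\in\{-1,1\}^n} Q(\mathcal{C}^t)$ with $C^{t,i}=C^i+K^t$, verifies condition~\eqref{parboth} of Proposition~\ref{newtheoref} via support functions and Lemma~\ref{commontangent2}, and then describes each $Q(\mathcal{C}^t)$ through Proposition~\ref{generalorthprop}; Example~\ref{Example33} gives the same conclusion via the normal-cone condition of Corollary~\ref{alternativetheo} with the families $C^{J,1}=G^1+\spann(\{e^j\}_{j\in J})$. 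Your approach is more elementary and self-contained for the $k=2$ isotone case, while the paper's decomposition viewpoint is what allows the extension to $k>2$ and to the weaker monotonicity condition $\bra{D^i-K^{s^i}}\cap K^{s^i}=D^i$ in Theorem~\ref{isotonegeneral}.
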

The most general known version of this result (e.g. Theorem 4 in \cite{lodi15}) is obtained by combining Theorem~\ref{isotone} with Lemma~\ref{gaugelemma} and noting that the result is still valid if we flip or mirror the axes of the $x$ variables. In fact,  Theorems~\ref{isotone} and \ref{originalorthotheo} can also be easily extended further by combining formulation \eqref{isotoneform} and any orthogonal transformation of the $x$ variables (i.e. axis flip plus rotation).

\section{Ideal Formulations without Variable Copies}\label{cayley}

To construct the projection of formulation \eqref{extendedformulation} onto the $x$ and $y$ variables we use a geometric characterization introduced in \cite{Embedding} for the polyhedral setting.
This characterization is based on the \emph{Cayley trick} or \emph{Cayley Embedding}, which is used to study Minkowski sums of polyhedra (e.g. \cite{caytrick,karavelas2013maximum,WeibelPhd}). The characterization in \cite{Embedding} uses a generalization of the Cayley Embedding to consider alternative uses of $0$-$1$ variables (beyond the $k$ variables $y_i$ that add to one used in \eqref{extendedformulation}). However, for simplicity we only generalize the standard version  to the non-polyhedral setting  through the following result we prove in Section~\ref{Moreongaugesec}.
\begin{restatable}{proposition}{embpartprop}\label{embpart}  Let $\mathcal{C}:=\set{C^i}_{i=1}^k\in \mathbb{C}_n$ and $Q\bra{\mathcal{C}}:=\conv\bra{\bigcup\nolimits_{i=1}^k C^i\times \set{\e^i} }$,
where $\e^i$ is the i-th $k$-dimensional unit vector. Then
\begin{enumerate}
\item\label{embpart1} $Q\bra{\mathcal{C}}$ is a closed convex set and $Q\bra{\mathcal{C}}_\infty=\set{\bra{x,y}\in \Real^{n+k}\,:\,x\in C^1_\infty,\; y=0}$,
\item\label{embpart3} $Q\bra{\mathcal{C}}$ is the projection  of the continuous relaxation of \eqref{extendedformulation}, and
\item\label{embpart44} $\bra{x,y}\in Q\bra{\mathcal{C}},\quad y\in \mathbb{Z}^k$ is an ideal formulation of $x\in \bigcup_{i=1}^k C^i$.
\end{enumerate}
\end{restatable}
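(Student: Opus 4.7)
For item (\ref{embpart1}) I would invoke the classical closure theorem for convex hulls of finitely many closed convex sets sharing a common recession cone (see e.g.\ Rockafellar's \emph{Convex Analysis}, Corollary 9.8.1). The sets $C^i\times\set{\e^i}\subseteq \Real^{n+k}$ are closed and convex, and their recession cones $C^i_\infty\times\set{{\mathbf 0}}$ all coincide with $C^1_\infty\times\set{{\mathbf 0}}$ by the assumption $\mathcal{C}\in \mathbb{C}_n$. This directly yields both the closedness of $Q(\mathcal{C})$ and the claimed expression for its recession cone.

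For item (\ref{embpart3}) I would first use the standard gauge characterizations from Lemma~\ref{gaugepropertiesprop} to rewrite the constraint $\gamma_{C^i-b^i}\bra{x^i-b^i y_i}\leq y_i$: for $y_i>0$ it reads $x^i\in y_iC^i$, and for $y_i=0$ it forces $x^i\in (C^i-b^i)_\infty=C^i_\infty$. Projecting a relaxation point $(x,y,x^1,\ldots,x^k)$ onto $(x,y)$ thus produces a point with $y\geq {\mathbf 0}$, $\sum_i y_i=1$, and $x=\sum_{i:y_i>0}y_i d^i+r$, where $d^i\in C^i$ and $r:=\sum_{i:y_i=0}x^i\in C^1_\infty$ (all recession cones coinciding). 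Picking any $i_0$ with $y_{i_0}>0$ and replacing $d^{i_0}$ by $d^{i_0}+r/y_{i_0}\in C^{i_0}$ (using $C^{i_0}+C^{i_0}_\infty\subseteq C^{i_0}$) recasts $(x,y)$ as a genuine convex combination $\sum_i y_i(\hat d^i,\e^i)\in Q(\mathcal{C})$. The reverse direction is a direct verification: any $(x,y)\in Q(\mathcal{C})$ groups into $\sum_i y_i(\tilde x^i,\e^i)$ with $\tilde x^i\in C^i$ whenever $y_i>0$, and setting $x^i:=y_i\tilde x^i$ for $y_i>0$ and $x^i:={\mathbf 0}\in C^i_\infty$ otherwise produces a relaxation point projecting to $(x,y)$.

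For item (\ref{embpart44}) I would exploit the fact that every minimal face of a closed convex set is a translate of its lineality space. By (\ref{embpart1}) this lineality space sits inside $(C^1_\infty\cap(-C^1_\infty))\times \set{{\mathbf 0}}$, so the $y$-coordinate is constant on every minimal face $F$. Fix $(x^*,y^*)\in F$ and suppose $|\mathrm{supp}(y^*)|\geq 2$ for contradiction. As in the reverse direction of (\ref{embpart3}), write $(x^*,y^*)=\sum_{i\in \mathrm{supp}(y^*)}y^*_i(\tilde x^i,\e^i)$ with $\tilde x^i\in C^i$. Peeling off any $i_0\in\mathrm{supp}(y^*)$ expresses $(x^*,y^*)$ as the proper convex combination $y^*_{i_0}(\tilde x^{i_0},\e^{i_0})+(1-y^*_{i_0})p$, where $p\in Q(\mathcal{C})$ is the renormalized sum over the remaining indices; the two endpoints have different $y$-components. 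The defining property of faces forces both endpoints into $F$, contradicting the constancy of $y$ on $F$. Hence $y^*=\e^j$ for some $j\in \sidx{k}$, establishing (\ref{embpart44}).

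The main technical subtlety is the absorption step in (\ref{embpart3}): it is precisely the common recession cone hypothesis $\mathcal{C}\in \mathbb{C}_n$ that allows the leftover residual $r$ collected from the $y_i=0$ coordinates to be folded into a single $C^{i_0}$. Without this hypothesis the projection of the relaxation can strictly contain $Q(\mathcal{C})$ and, moreover, $Q(\mathcal{C})$ itself may fail to be closed, as is well known for Cayley-type constructions with mismatched recession directions.
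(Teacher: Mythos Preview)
Your argument is correct. Parts~\ref{embpart1} and~\ref{embpart3} match the paper's proof essentially line for line: the paper also invokes Rockafellar's Corollary~9.8.1 for closedness and the recession cone, and for part~\ref{embpart3} it proves precisely the same equivalence via the same absorption trick (folding the $y_i=0$ residuals into a single index with $y_i>0$, using the common-recession-cone hypothesis).

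For part~\ref{embpart44} your route differs from the paper's. The paper simply writes that it ``follows directly from part~\ref{embpart3}'': since $Q(\mathcal{C})$ is the projection onto $(x,y)$ of the continuous relaxation of~\eqref{extendedformulation}, and Theorem~\ref{extendedTheorem} has already established that the extended formulation is ideal, idealness is inherited by the projection. Your argument is instead self-contained: you read off from part~\ref{embpart1} that the lineality space of $Q(\mathcal{C})$ has trivial $y$-component, so $y$ is constant on each minimal face, and then you split any point with $|\mathrm{supp}(y^*)|\geq 2$ as a proper convex combination in $Q(\mathcal{C})$ with differing $y$-components to force a contradiction. This is in fact the same perturbation idea the paper uses to prove Theorem~\ref{extendedTheorem} itself, just applied directly to $Q(\mathcal{C})$ rather than to the lifted relaxation; the payoff is that your proof of~\ref{embpart44} does not rely on Theorem~\ref{extendedTheorem} at all, whereas the paper's one-line deduction implicitly does. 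One small omission: you address only idealness and do not explicitly verify the validity condition~\eqref{formcondition} for $Q(\mathcal{C})$, but this is immediate from the fact that the slice $\{x:(x,\e^j)\in Q(\mathcal{C})\}$ equals $C^j$.
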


Proposition~\ref{embpart} reduces the construction of an ideal formulation to that of the convex hull defining $Q\bra{\mathcal{C}}$, which can be as challenging as the projection of \eqref{extendedformulation}. Fortunately, as illustrated in the following propositions, it can sometimes be easily constructed for special structures. The first structure we consider is \emph{nearly-homothetic} sets that are almost translations and scalings of one another (we replace the scaling by $0$ with the common recession cone of the sets).

\begin{proposition}\label{homothetict}
Let $C\subseteq \Real^n$  be a closed convex set such that ${\bf 0}\in C$,  $\set{b^i}_{i=1}^k\subseteq \mathbb{R}^n$ and $r\in \Real^k_+\setminus \set{{\bf 0}}$. If $\mathcal{C}:=\set{C^i}_{i=1}^k$ is such that $C^i=r_i C+b^i + C_\infty$ for each  $i\in \sidx{k}$, then $\bra{x,y}\in Q\bra{\mathcal{C}}$ if and only if
\begin{equation}\label{projectedgauge}
 \hspace{-0.1in} \gamma_{C}\left(x- \sum\nolimits_{i=1}^k y_i b^i\right)\leq \sum\nolimits_{i=1}^k r_i y_i,\quad \sum\nolimits_{i=1}^k y_i=1,\quad y_i\geq 0 \quad \forall i\in \sidx{k}.
\end{equation}
\end{proposition}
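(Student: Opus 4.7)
The plan is to exploit Proposition~\ref{embpart}~\ref{embpart3}, which says that $Q\bra{\mathcal{C}}$ coincides with the projection of the continuous relaxation of \eqref{extendedformulation} onto the $(x,y)$-space. Thus $\bra{x,y}\in Q\bra{\mathcal{C}}$ is equivalent to the existence of $x^1,\ldots,x^k\in\Real^n$ with $\sum_{i=1}^k x^i=x$, $\sum_{i=1}^k y_i=1$, $y\geq {\bf 0}$, and $\gamma_{C^i-b^i}\bra{x^i-b^i y_i}\leq y_i$ for each $i$. Under the homothety hypothesis, using $C+C_\infty=C$ we have $C^i-b^i=r_iC$ when $r_i>0$ and $C^i-b^i=C_\infty$ when $r_i=0$. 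Lemma~\ref{gaugepropertiesprop} then gives $\gamma_{C^i-b^i}(v)=\gamma_C(v)/r_i$ in the first case, and $\gamma_{C^i-b^i}(v)=0$ if $v\in C_\infty$ (and $+\infty$ otherwise) in the second. In both cases, the per-index inequality $\gamma_{C^i-b^i}(x^i-b^iy_i)\leq y_i$ is equivalent to the single inequality $\gamma_C\bra{x^i-b^iy_i}\leq r_iy_i$, so the whole argument reduces to proving equivalence between these per-index inequalities (plus the simplex constraints) and \eqref{projectedgauge}.

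For the ($\Rightarrow$) direction, I would take $\bra{x,y}\in Q\bra{\mathcal{C}}$ with its associated variable copies $x^i$, sum the inequalities $\gamma_C(x^i-b^iy_i)\leq r_iy_i$ over $i$, and invoke subadditivity of the sublinear function $\gamma_C$ (a consequence of convexity and positive homogeneity from Lemma~\ref{gaugepropertiesprop}) to conclude
\[
\gamma_C\bra{x-\sum\nolimits_{i=1}^k y_i b^i}=\gamma_C\bra{\sum\nolimits_{i=1}^k\bra{x^i-b^iy_i}}\leq \sum\nolimits_{i=1}^k r_i y_i,
\]
which, combined with $\sum_i y_i=1$ and $y\geq {\bf 0}$, is exactly \eqref{projectedgauge}.

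For ($\Leftarrow$) the plan is to construct the $x^i$ explicitly from \eqref{projectedgauge}, splitting on whether $s:=\sum_{i=1}^k r_iy_i$ is positive. If $s>0$, \eqref{projectedgauge} gives $x-\sum_i y_ib^i\in sC$, so I can write $x-\sum_iy_ib^i=sc$ for some $c\in C$ and set $x^i:=y_ib^i+r_iy_ic$; then $\sum_i x^i=x$ and $\gamma_C(x^i-y_ib^i)=r_iy_i\gamma_C(c)\leq r_iy_i$ by positive homogeneity and $\gamma_C(c)\leq 1$. If $s=0$, then $r_i=0$ whenever $y_i>0$ and $x-\sum_iy_ib^i\in C_\infty$; I would pick any $i_0$ with $y_{i_0}>0$ and route the entire recession displacement through that index, setting $x^{i_0}:=y_{i_0}b^{i_0}+\bra{x-\sum_i y_ib^i}$ and $x^i:=y_ib^i$ for $i\neq i_0$. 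The main subtlety is this $s=0$ branch, where I must use that $C_\infty$ is a cone and that $r_{i_0}=0$ forces $\gamma_{C^{i_0}-b^{i_0}}$ to vanish on $C_\infty$, so that the constraint at $i_0$ is satisfied despite the entire recession displacement being placed there while the other indices trivially satisfy $\gamma_C(0)=0\leq r_iy_i$.
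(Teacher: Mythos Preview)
Your proof is correct, but it takes a different route from the paper. The paper argues directly at the level of sets: for $Q\bra{\mathcal{C}}\subseteq Q$ it simply observes that the set $Q$ described by \eqref{projectedgauge} is convex and contains each $C^i\times\set{\e^i}$, hence contains their convex hull; for $Q\subseteq Q\bra{\mathcal{C}}$ it reads the gauge inequality as $x-\sum_i y_i b^i\in\bra{\sum_i r_i y_i}C$ and then uses the Minkowski-sum identity
\[
\bra{\sum\nolimits_i r_i y_i}C+\sum\nolimits_i y_i b^i+C_\infty=\sum\nolimits_i y_i\bra{r_i C+b^i+C_\infty}=\sum\nolimits_i y_i C^i,
\]
which immediately gives $\bra{x,y}\in Q\bra{\mathcal{C}}$. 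No variable copies, no case split on $s=\sum_i r_i y_i$, and no appeal to Proposition~\ref{embpart}.

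Your approach instead passes through the extended formulation via Proposition~\ref{embpart}\ref{embpart3}, reduces the per-index constraints to the uniform form $\gamma_C\bra{x^i-b^iy_i}\leq r_iy_i$, and then uses subadditivity of $\gamma_C$ for one direction and an explicit construction of the $x^i$ for the other. This is perfectly valid and is in some sense more constructive: it exhibits the decomposition witnessing membership in $Q\bra{\mathcal{C}}$. The cost is the extra machinery (invoking Proposition~\ref{embpart}, the gauge scaling computation, and the case split on $s$), whereas the paper's argument is a three-line set calculation that handles the $s=0$ case implicitly through the addition of $C_\infty$.
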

\begin{proof}
Let $Q$ be the \blue{set of points that satisfy}  \eqref{projectedgauge}. Then $Q$ is convex and $ C^i\times \set{\e^i}\subseteq Q$ for all $i\in \sidx{k}$, so we have $  Q\bra{\mathcal{C}}\subseteq Q$.
Finally, if $\bra{x,y}\in Q$, then $x\in \bra{\sum_{i=1}^k y_i r_i } C +\sum_{i=1}^k y_i b^i\subseteq \bra{\sum_{i=1}^k y_i r_i } C +\sum_{i=1}^k y_i b^i+C_\infty=\sum_{i=1}^k y_i \bra{r_i C +b^i+C_\infty}$ \blue{and hence $\bra{x,y}\in Q\bra{\mathcal{C}}$.} \qed
\end{proof}

The second structure we consider is a technical generalization of  Theorem~\ref{originalorthotheo} that we later use to generalize Theorem~\ref{isotone}. This generalization relaxes the orthogonality requirement of Theorem~\ref{originalorthotheo} by allowing sets that are the Minkowski sum of the orthogonal sets and the non-negative orthant. This requires adding technical restriction \eqref{genisocond} on the sets, which generalizes the monotonicity condition of \blue{Theorem~\ref{isotone}}. We discuss this condition further in \blue{Section~\ref{isotonegensec}}. Finally, we explicitly consider the possible orthogonal transformation we have previously alluded to, and which we represent through an orthonormal basis. This last step allows for a more direct practical application of the result, but makes the proof more technical so we postpone it to Section~\ref{proofofgeneralorthpropsec}.
\begin{proposition}\label{generalorthprop} Let $\set{G^i}_{i=1}^k$ be closed convex sets in $\Real^n$ such that ${\bf 0} \in G^i$ for all $i\in \sidx{k}$,   $\set{v^j}_{j=1}^n\subseteq \Real^n$ be an orthonormal basis of $\Real^n$, and $\set{J_i}_{i=1}^k$ be disjoint sets such that $\sidx{n}=\bigcup_{i=1}^k J_i$, $\set{b^i}_{i=1}^k\subseteq \Real^n$, $t\in \set{-1,1}^n$ and $M=\cone\bra{\set{ t_j v^j}_{j=1}^n}$. Finally, let $\set{s^i}_{i=1}^k\subseteq \set{-1,0,1}^n$ be such that
  for each $i\in \sidx{k}$  we have  $s^i_j=0$ for all $j\notin J_i$,  $K^i=\cone\bra{\set{ s^i_j v^j}_{j=1}^n}$ and  $C^i:=b^i+G^i\cap K^i+M$. If for all $i\in \sidx{k}$ we have
\begin{equation}\label{genisocond}
 \bra{\bra{G^i\cap K^i}-K^i}\cap K^i=G^i\cap K^i
 \end{equation}
 and $G^i\cap K^i$ is compact, then $\bra{x,y}\in Q\bra{\mathcal{C}}$ if and only if
\begin{subequations}\label{orthogonalplusprojcone}
\begin{alignat}{3}
\gamma_{G^i}\bra{\sum\nolimits_{j\in J_i} u^{i,j} \bra{u^{i,j}\cdot x-\sum\nolimits_{l=1}^k \overline{b}^{i,l}_j y_l }^{\hspace{-0.05in}\text{\relsize{2}{$+$}}\hspace{0.05in}}}&\leq y_i&\quad&\forall i\in \sidx{k}\label{orthogonalplusprojcone1}\\
t_j   v^j\cdot x-\sum\nolimits_{l=1}^k \underline{b}^{l}_j y_l &\geq 0 &\quad& \forall  j\in  \sidx{n}\label{orthogonalplusprojcone2}\\
%\sum\nolimits_{i=1}^k \underline{b}_{i,j} y_i \leq v^j\cdot x&\leq \sum\nolimits_{i=1}^k \overline{b}_{i,j} y_i&\quad&\forall j\in \sidx{n}\\
 \sum\nolimits_{i=1}^k y_i=1,\quad y_i&\geq 0 &\quad& \forall i\in \sidx{k},
\end{alignat}
 \end{subequations}
 where  $\bra{a}^+=\max\set{0,a}$ for any $a\in \Real$ and for all $i,l\in \sidx{k}$ and $j\in \sidx{n}$ we let $u^{i,j}= \bra{-s_j^i t_j}^+ s_j^i v^j$, $\underline{b}^{i}_j=\min\set{t_j   v^j\cdot x\,:\, x\in b^i+G^i\cap K^i}$ and  $\overline{b}^{i,l}_j=\max\set{s_j^i   v^j\cdot x\,:\, x\in b^l+G^l\cap K^l}$ if $i\neq l$ and  $\overline{b}^{i,i}_j=u^{i,j}\cdot b^i$.
\end{proposition}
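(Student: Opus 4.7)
My plan is to prove the two inclusions $Q(\mathcal{C}) \subseteq R$ and $R \subseteq Q(\mathcal{C})$ separately, letting $R$ denote the set defined by \eqref{orthogonalplusprojcone}. Both directions rely on a sign case analysis in the orthonormal basis $\set{v^j}_{j=1}^n$: for each $i$ and $j \in J_i$ we have (a) $s^i_j = 0$, (b) $s^i_j t_j > 0$, or (c) $s^i_j t_j < 0$; only case (c) produces a nonzero $u^{i,j}$, which then equals $s^i_j v^j$. Because $\set{J_i}$ is a partition and any $g \in G^l \cap K^l$ has zero $v^j$-component whenever $s^l_j = 0$, the coefficients collapse: $\overline{b}^{i,l}_j = s^i_j v^j \cdot b^l$ for all $l \neq i$ with $j \in J_i$ in case (c), while $\underline{b}^l_j = t_j v^j \cdot b^l$ in every situation except when $j \in J_l$ and $l$ is in case (c) at $j$.

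For $Q(\mathcal{C}) \subseteq R$, it suffices by convexity of $R$ to check $C^i \times \set{\e^i} \subseteq R$ for each $i$. Fix $c = b^i + g + m$ with $g \in G^i \cap K^i$, $m \in M$, and decompose $g = \sum_{j \in J_i} \alpha_j s^i_j v^j$ and $m = \sum_j \mu_j t_j v^j$ with $\alpha_j, \mu_j \geq 0$. Inequality \eqref{orthogonalplusprojcone2} follows immediately from the definition of $\underline{b}^i_j$. For \eqref{orthogonalplusprojcone1} with $l = i$, substitution in case (c) yields $u^{i,j} \cdot c - \overline{b}^{i,i}_j = \alpha_j - \mu_j$, so the summand vector equals $w^i := \sum_{j :\, s^i_j t_j < 0} (\alpha_j - \mu_j)^+ s^i_j v^j$. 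Using $\alpha_j - (\alpha_j - \mu_j)^+ = \min(\alpha_j, \mu_j) \geq 0$ in case (c) and $\alpha_j \geq 0$ in cases (a)-(b), the vector $g - w^i$ has nonnegative coefficients along every $s^i_j v^j$, so $g - w^i \in K^i$ and hence $w^i \in (G^i \cap K^i) - K^i$; combined with $w^i \in K^i$, hypothesis \eqref{genisocond} yields $w^i \in G^i$ and $\gamma_{G^i}(w^i) \leq 1 = y_i$. For $l \neq i$, every summand in \eqref{orthogonalplusprojcone1} vanishes: whenever $u^{l,j} \neq 0$, a short calculation gives $u^{l,j} \cdot c - \overline{b}^{l,i}_j = s^l_j t_j \mu_j \leq 0$.

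For $R \subseteq Q(\mathcal{C})$, I construct a feasible point of the extended formulation \eqref{extendedformulation} and invoke Proposition~\ref{embpart}(\ref{embpart3}). Given $(x,y) \in R$, define $g^i_* := \sum_{j \in J_i} u^{i,j}(u^{i,j} \cdot x - \sum_l \overline{b}^{i,l}_j y_l)^+$. Then $g^i_* \in K^i$ by construction, and \eqref{orthogonalplusprojcone1} together with Lemma~\ref{gaugepropertiesprop} gives $g^i_* \in y_i G^i$, whence $g^i_* \in y_i(G^i \cap K^i)$ since $K^i$ is a cone. Compactness of $G^i \cap K^i$ handles the degenerate case $y_i = 0$: it forces $(G^i)_\infty \cap K^i = (G^i \cap K^i)_\infty = \set{0}$, so $g^i_* = 0$. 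I then pick nonnegative $\delta^i_j$ whose index-$i$ sums $\sum_i \delta^i_j$ satisfy the componentwise equations required by $\sum_i x^i = x$; these sums equal either $\phi_j$ or $\phi_j^+$ depending on whether the unique $i$ with $j \in J_i$ is in case (c), where $\phi_j := t_j v^j \cdot x - \sum_l y_l t_j v^j \cdot b^l$; in either situation the sum is nonnegative (the first by \eqref{orthogonalplusprojcone2}, the second by inspection), so a valid distribution exists. Setting $m^i_* := \sum_j \delta^i_j t_j v^j \in M$ and $x^i := y_i b^i + g^i_* + m^i_*$, one verifies $x^i - y_i b^i \in y_i(G^i \cap K^i) + M = y_i(C^i - b^i)$ for $y_i > 0$ and $x^i - y_i b^i \in M = (C^i - b^i)_\infty$ for $y_i = 0$; Lemma~\ref{gaugepropertiesprop} then gives $\gamma_{C^i - b^i}(x^i - y_i b^i) \leq y_i$, which is exactly the constraint of \eqref{extendedformulation}.

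The main difficulty is the sign-by-sign bookkeeping needed to simplify $\underline{b}^l_j$ and $\overline{b}^{i,l}_j$ and to express each vector constraint in scalar terms across the three sign regimes; once that is in hand, the forward direction hinges on a single invocation of hypothesis \eqref{genisocond} (the truncated $w^i$ lies in $(G^i \cap K^i) - K^i$ and also in $K^i$), and the reverse direction on the elementary distribution of nonnegative slacks in the recession directions of $M$.
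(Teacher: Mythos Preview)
Your plan is correct and matches the paper's argument: for $Q(\mathcal{C})\subseteq R$ you verify $C^i\times\{\e^i\}\subseteq R$ directly (the paper phrases this step tersely as ``validity of \eqref{orthogonalplusprojcone} as a formulation''), and for $R\subseteq Q(\mathcal{C})$ you construct explicit copies $x^i$ feasible for \eqref{extendedformulation} and invoke Proposition~\ref{embpart}, exactly as the paper does. The paper packages the description of $\epi(\gamma_{G^i\cap K^i+M})$ into a preliminary lemma (Lemma~\ref{finalfinalgaugelemma}) and its $x^i$ construction differs from yours in detail, but the strategy is identical; the one point both you and the paper pass over lightly is that convexity of $R$---which you need to go from $C^i\times\{\e^i\}\subseteq R$ to $Q(\mathcal{C})\subseteq R$---is not immediate from \eqref{orthogonalplusprojcone1} (a composition of $\gamma_{G^i}$ with the nonlinear map $(\cdot)^+$) and relies on hypothesis \eqref{genisocond}, which yields monotonicity of $\gamma_{G^i}$ along the cone $K^i$ and hence convexity of the composite.
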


\section{Boundary Structure of the Cayley Embedding}\label{boundary}
To characterize $Q\bra{\mathcal{C}}$ for more complicated unions we will use the special structure of its boundary. It is known that if all $C^i$ are polytopes, then every face of $Q\bra{\mathcal{C}}$ is of the form $\conv\bra{\bigcup_{i=1}^n F^i\times\set{\e^i}}$ where  the $F^i$ are faces of $C^i$ whose normals intersect \cite{caytrick,karavelas2013maximum,WeibelPhd}. We generalize this result beyond polyhedra using standard properties of the boundary of a closed convex set (e.g. \cite{hiriart-lemarechal-2001}).

\begin{definition} The support function of $S\subseteq \Real^n$ is the function $\sigma_S:\Real^n \to \Real \cup \set{\infty}$ defined by $\sigma_S\bra{d}:=\sup\set{d\cdot x\,:\, x\in S}$.
The domain of $\sigma_S$ is the set $\dom\bra{\sigma_S}:=\set{d\in \mathbb{R}^n\,:\, \sigma_S\bra{d}< \infty}$.

For a closed convex set $C\subseteq \Real^n$ we denote its boundary by $\bd\bra{C}=C\setminus \Int\bra{C}$, its relative boundary by $\relbd\bra{C}=C\setminus \relint \bra{C}$, its affine hull by $\aff\bra{C}$ and the linear subspace parallel to $\aff\bra{C}$ by $L\bra{C}$.

The face of $C$ exposed by $d\in \Real^n$ is $F_C\bra{d}:=\set{x\in C\,:\, d\cdot x=\sigma_C\bra{d}}$ and its normal cone at $x\in \bd\bra{C}$ is $N_C(x):=\set{d\in \Real^n\,:\, d\cdot(y-x)\leq 0\quad \forall y\in C}$. The tangent cone $T_C(x)$ to $C$ at $x\in \bd\bra{C}$ is the polar of   $N_C(x)$.

\end{definition}

\begin{restatable}{proposition}{boundaryprop}\label{bdprop}
Let $\mathcal{C}:=\set{C^i}_{i=1}^k\in \mathbb{C}_n$ , $\set{b^i}_{i=1}^k\subseteq \Real^n$ be such that $b^i\in C^i$ for all $i\in \sidx{k}$ and $A\in \Real^{r\times n}$ be such that $L\bra{\mathcal{C}}:=\sum_{i=1}^k L\bra{C^i}=\set{x\in \Real^n\,:\, Ax={\bf 0}}$. Then
\[\aff\bra{Q\bra{\mathcal{C}}}=\set{\bra{x,y}\in \Real^{n+k}\,:\, \sum\nolimits_{i=1}^k y_i=1,\quad Ax=\sum\nolimits_{i=1}^k A b^i y_i}.\]
In addition, let $\mathcal{U}\bra{\mathcal{C}}:=\set{u\in L\bra{\mathcal{C}}\setminus \set{\bf 0}\,:\, F_{C^i}(u)\neq \emptyset \quad \forall i\in \sidx{k}}$,
$N\bra{\mathcal{C}}:=\set{\bra{{x}^i}_{i=1}^k\in \cart\nolimits_{i=1}^k \bd\bra{C^i}  \,:\, L\bra{\mathcal{C}}\cap\bigcap\nolimits_{i=1}^k N_{C^i}\bra{{x}^i} \neq \set{\bf 0}}$,
and for each $\mathcal{X}:=\bra{{x}^i}_{i=1}^k\in N\bra{\mathcal{C}}$ let $Q\bra{\mathcal{X}}:=\conv\bra{\bigcup_{i=1}^n \set{{x}^i}\times \set{\e^i} }$. Then  $\relbd\bra{Q\bra{\mathcal{C}}}$ is equal to the union of
\beq\label{relbdunion1}
\bigcup\nolimits_{u\in \mathcal{U}\bra{\mathcal{C}}} \conv\bra{\bigcup\nolimits_{i=1}^k F_{C^i}(u)\times \set{\e^i}}=\bigcup\nolimits_{\substack{\mathcal{X}\in N\bra{\mathcal{C}}}} Q\bra{\mathcal{X}}
\eeq
and
\beq\label{relbdunion2}
\bigcup\nolimits_{i=1}^k \conv\bra{\bigcup\nolimits_{j\neq i} C^j \times \set{\e^j}}= \bigcup\nolimits_{i=1}^k \set{\bra{x,y}\in Q\bra{\mathcal{C}}\,:\, y_i=0}.
\eeq
\end{restatable}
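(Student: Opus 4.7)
The plan is to compute $\aff\bra{Q\bra{\mathcal{C}}}$ directly, describe every exposed face of $Q\bra{\mathcal{C}}$ via its support function, and then invoke the standard characterization $\relbd\bra{C}=\bigcup\set{F_C\bra{d}\,:\, d\notin L\bra{C}^\perp}$ for a closed convex set $C$ to obtain the boundary statement. The workhorse will be the identity $\sigma_{Q\bra{\mathcal{C}}}\bra{u,\beta}=\max_i\bra{\sigma_{C^i}\bra{u}+\beta_i}$ together with its companion $F_{Q\bra{\mathcal{C}}}\bra{u,\beta}=\conv\bra{\bigcup\nolimits_{i\in I\bra{u,\beta}} F_{C^i}\bra{u}\times\set{\e^i}}$, where $I\bra{u,\beta}$ collects the indices attaining that maximum; both follow by writing each point of $Q\bra{\mathcal{C}}$ as a finite convex combination of points in $\bigcup_i C^i\times\set{\e^i}$, which is legitimate because Proposition~\ref{embpart}.\ref{embpart1} makes this convex hull closed.

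For the affine hull I would fix the base point $\bra{b^1,\e^1}$ and span the differences $\bra{x,\e^i}-\bra{b^1,\e^1}$ over $x\in C^i$, $i\in\sidx{k}$. Using $\aff\bra{C^i}=b^i+L\bra{C^i}$ and $b^i\in C^i$, this span collapses to $L\bra{\mathcal{C}}\times\set{\mathbf{0}}+\spann\set{\bra{b^i-b^1,\e^i-\e^1}\,:\, i\geq 2}$, and translating back by $\bra{b^1,\e^1}$ rewrites it as the system $\sum_i y_i=1$, $A\bra{x-\sum_i y_i b^i}=\mathbf{0}$. The internal equality of \eqref{relbdunion2} is then immediate: $\set{\bra{x,y}\in Q\bra{\mathcal{C}}\,:\, y_i=0}$ is the face exposed by $\bra{\mathbf{0},-\e^i}$, which the face formula rewrites as $\conv\bra{\bigcup\nolimits_{j\neq i} C^j\times\set{\e^j}}$. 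The internal equality of \eqref{relbdunion1} is tautological once one observes that $L\bra{\mathcal{C}}\cap\bigcap_i N_{C^i}\bra{x^i}\neq\set{\mathbf{0}}$ is exactly the existence of a common $u\in\mathcal{U}\bra{\mathcal{C}}$ with $x^i\in F_{C^i}\bra{u}$ for every $i$, so both unions parametrize the same collection of convex hulls of tuples.

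For $\relbd\bra{Q\bra{\mathcal{C}}}\subseteq\eqref{relbdunion1}\cup\eqref{relbdunion2}$, fix $\bra{x^*,y^*}\in\relbd\bra{Q\bra{\mathcal{C}}}$ and an exposing direction $d^*=\bra{u^*,\beta^*}\notin L\bra{Q\bra{\mathcal{C}}}^\perp$, and split on $I\bra{d^*}$. If $I\bra{d^*}\subsetneq\sidx{k}$, any $i\notin I\bra{d^*}$ places $\bra{x^*,y^*}$ inside $\conv\bra{\bigcup\nolimits_{j\neq i} C^j\times\set{\e^j}}$, inside \eqref{relbdunion2}. Otherwise $I\bra{d^*}=\sidx{k}$; let $\tilde u^*$ be the orthogonal projection of $u^*$ onto $L\bra{\mathcal{C}}$, and observe that $L\bra{C^i}\subseteq L\bra{\mathcal{C}}$ makes $u^*-\tilde u^*$ annihilate $L\bra{C^i}$ and hence be constant on each $C^i$, so $F_{C^i}\bra{u^*}=F_{C^i}\bra{\tilde u^*}$ for all $i$. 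The projection $\tilde u^*$ must be nonzero---otherwise every $F_{C^i}\bra{u^*}=C^i$ and $F_{Q\bra{\mathcal{C}}}\bra{d^*}=Q\bra{\mathcal{C}}$, contradicting $d^*\notin L\bra{Q\bra{\mathcal{C}}}^\perp$---so $\tilde u^*\in\mathcal{U}\bra{\mathcal{C}}$ and $\bra{x^*,y^*}$ sits in \eqref{relbdunion1}. The reverse containment uses the witnesses $d=\bra{u,\bra{-\sigma_{C^i}\bra{u}}_{i=1}^k}$ for \eqref{relbdunion1} and $d=\bra{\mathbf{0},-\e^i}$ for \eqref{relbdunion2}, each checked outside $L\bra{Q\bra{\mathcal{C}}}^\perp$ via the affine-hull description. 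The main obstacle I expect is the bookkeeping for non-full-dimensional sets: one must work with supporting directions modulo $L\bra{Q\bra{\mathcal{C}}}^\perp$ rather than merely asking $d^*\neq\mathbf{0}$, and must verify that the projection $u^*\mapsto\tilde u^*$ does not accidentally leak the direction back into $L\bra{Q\bra{\mathcal{C}}}^\perp$.
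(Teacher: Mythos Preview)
Your plan mirrors the paper's proof almost exactly: both compute $\sigma_{Q(\mathcal{C})}(u,\beta)=\max_i\bigl(\sigma_{C^i}(u)+\beta_i\bigr)$, deduce the companion face formula $F_{Q(\mathcal{C})}(u,\beta)=\conv\bigl(\bigcup_{i\in I(u,\beta)}F_{C^i}(u)\times\{\e^i\}\bigr)$, and then sweep the relative boundary by exposed faces. The paper organizes the sweep by the type of $u$ (namely $u\in\mathcal{U}(\mathcal{C})$, $u=0$, and a residual case), whereas you organize it by whether $I(d^*)$ is all of $\sidx{k}$; these are equivalent decompositions and your affine-hull computation and your witnesses for the reverse inclusion are correct.

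There is one genuine omission in your case split. When $I(d^*)=\sidx{k}$ you conclude $\tilde u^*\in\mathcal{U}(\mathcal{C})$, but membership in $\mathcal{U}(\mathcal{C})$ requires $F_{C^i}(\tilde u^*)\neq\emptyset$ for every $i$, and $I(d^*)=\sidx{k}$ only says that each $\sigma_{C^i}(u^*)$ is finite and ties for the maximum---it does not say the supremum is attained. With the common-recession-cone hypothesis it is perfectly possible to have $u^*\in\dom(\sigma_{C^i})$ for all $i$, $I(d^*)=\sidx{k}$, yet $F_{C^{i_0}}(u^*)=\emptyset$ for some $i_0$ (take $C^{i_0}$ to be a hyperbola branch and $u^*$ an asymptotic direction). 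The paper treats this explicitly as its ``last case'': when $I(d^*)=\sidx{k}$ but $F_{C^{i_0}}(u^*)=\emptyset$, the face formula collapses to $\conv\bigl(\bigcup_{j:F_{C^j}(u^*)\neq\emptyset}F_{C^j}(u^*)\times\{\e^j\}\bigr)$, so any $(x^*,y^*)$ in that face has $y^*_{i_0}=0$ and lands in \eqref{relbdunion2} rather than \eqref{relbdunion1}. Add this sub-case to your $I(d^*)=\sidx{k}$ branch and the argument is complete.
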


We postpone the proof of Proposition~\ref{bdprop} to Section~\ref{proofofbdprop} and instead illustrate it in the following example.

\begin{example}\label{example2}Let
  $C^1=\set{x\in \Real^2\,:\, \bra{2- v_1 x_1 }\bra{2- v_2 x_2 }\geq 1 \quad \forall v\in \set{-1,1}^2}$ and $C^2=[-5/4,5/4]^2$ be the sets from Example~\ref{example1} depicted in Figures~  \ref{ex1fig} and \ref{normalfigb}. Let  $\hat{x}^2=(1,-1)\in \bd\bra{C^2}$ and $\hat{B}^1:=\set{x\in C^1\,:\, \bra{2-x_1}\bra{2+x_2}=1}\subseteq \bd\bra{C^1}$ be the boundary subsets highlighted in black in Figure~\ref{normalfigb} (the range of their normals are depicted by dashed arrows). Then $N_{C^1}\bra{{x}^1}\cap N_{C^2}\bra{\hat{x}^2}\neq \emptyset$ if and only if ${x}^1\in \hat{B}^1$ and hence by Proposition~\ref{bdprop} we have that $\hat{B}:=\bigcup_{{x}^1\in \hat{B}^1}\conv\bra{\bra{\set{{x}^1}\times \set{\e^1}}\cup \bra{\set{\hat{x}^2}\times \set{\e^2}}}\subseteq \relbd\bra{Q\bra{\mathcal{C}}}$. This is illustrated in Figure~\ref{normalfiga} were we use the fact that $y_1+y_2=1$ for all  $\bra{x,y}\in Q\bra{\mathcal{C}}$ to eliminate $y_2$ and depict $Q\bra{\mathcal{C}}$ three dimensions. In Figure~\ref{normalfiga} the representations (i.e. after eliminating $y_2$) of  $\hat{B}^1\times \set{\e^1}$ and $\set{\hat{x}^2}\times \set{\e^2}$ are highlighted in black and $\hat{B}$ corresponds to the meshed  surface. This surface is an example of a portion of the boundary of $Q\bra{\mathcal{C}}$ considered in \eqref{relbdunion1}. We obtain another  example of this portion if we let $\tilde{x}^1:=\bra{0,-3/2}\in\bd\bra{C^1}$ and $B^2:=\set{x\in C^2\,:\,x_2=-5/4}\subseteq \bd\bra{C^2}$ be the boundary subsets highlighted in white in Figure~\ref{normalfigb}, for which $N_{C^1}\bra{\tilde{x}^1}\cap N_{C^2}\bra{x^2}\neq \emptyset$ if and only if $x^2\in B^2$, and $\tilde{B}:=\conv\bra{\bra{\set{\tilde{x}^1}\times \set{\e^1}}\cup \bra{B^2\times \set{\e^2}}}\subseteq \relbd\bra{Q\bra{\mathcal{C}}}$.  An example of a portion considered in \eqref{relbdunion2} is simply $C^1 \times \set{\e^1}$ whose representation is depicted by the dotted surface in  Figure~\ref{normalfiga}. \qed
\end{example}
\begin{figure}[htpb]
\centering
\subfigure[Cayley embedding with variable $y_2$ projected out.]{\includegraphics[scale=0.55]{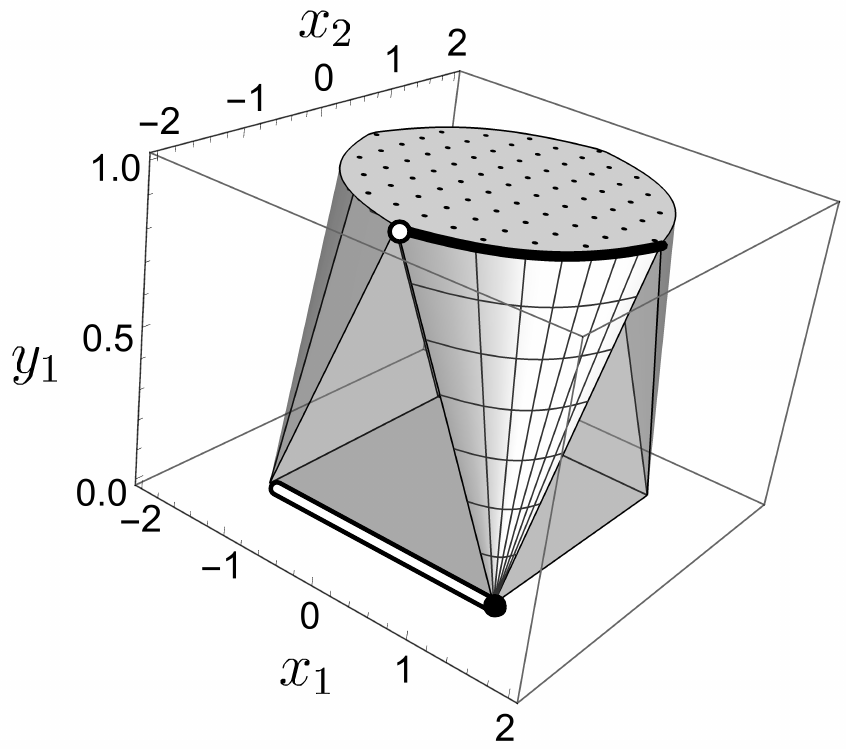}\label{normalfiga}}
\subfigure[Sets in dark gray. Boundary subsets from Example~\ref{example2} in black and white, with normals as dashed arrows. Supersets $C^{s,i}$ from Example~\ref{example2b} in light gray. One nonlinear inequality of $C^1$ as dotted curve.]{\includegraphics[scale=.35,trim = -4mm -10mm -4mm 0mm]{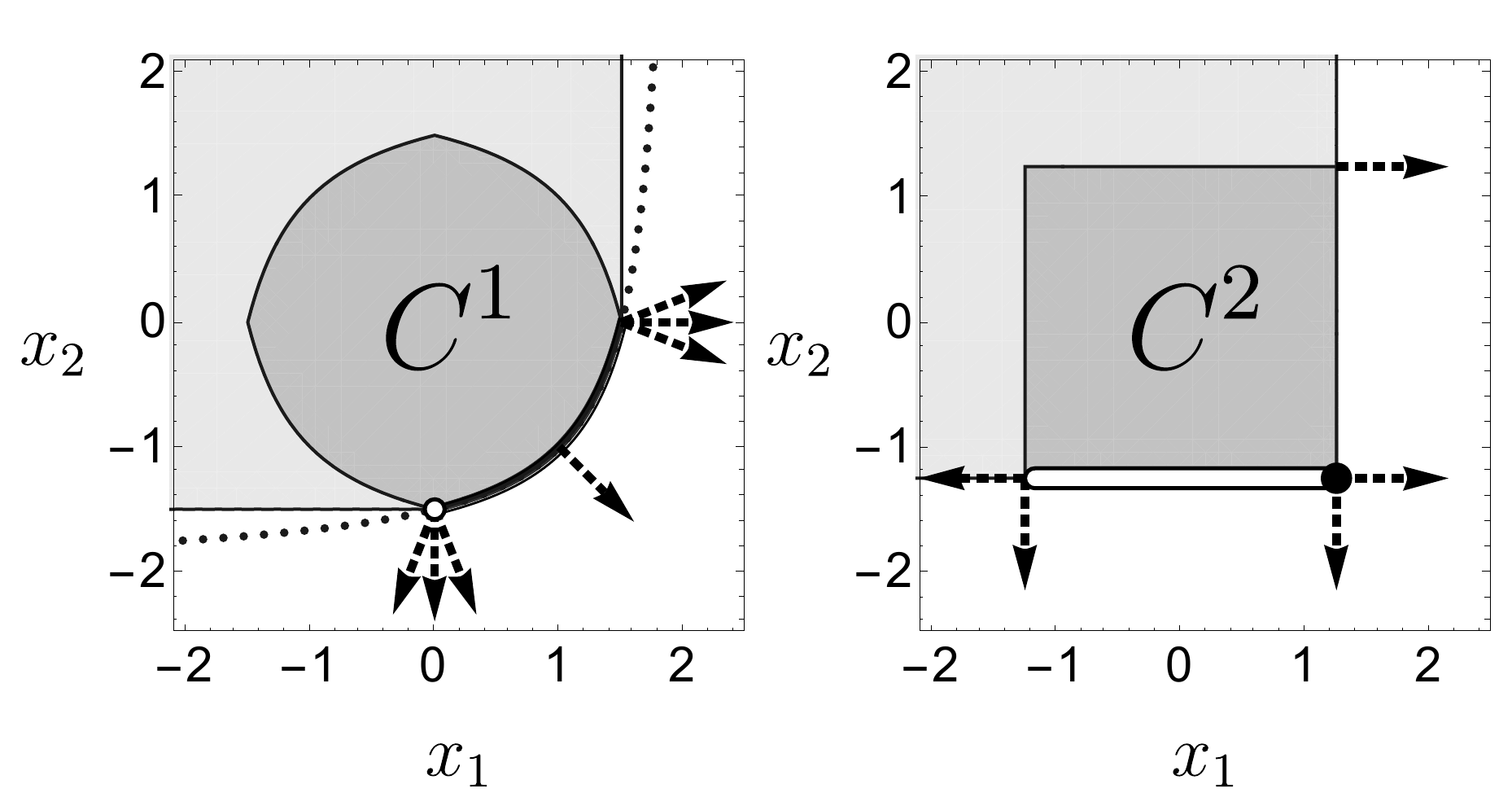}\label{normalfigb}}
\caption{An illustration of Propositions~\ref{bdprop} and \ref{newtheoref} for Examples~\ref{example2} and \ref{example2b}.}\label{normalfig}
\end{figure}

Example~\ref{example2} illustrates how the characterization of $\bd\bra{Q\bra{\mathcal{C}}}$ from Proposition~\ref{bdprop} can be turned into a piecewise description composed of a finite number of sets (e.g. $\hat{B}$, $\tilde{B}$, $C^1 \times \set{\e^1}$, etc.).	All sets associated to \eqref{relbdunion2} have simple explicit descriptions that yield trivial valid inequalities for $Q\bra{\mathcal{C}}$ (e.g. $C^1 \times \set{\e^1}$ yields $y_1\leq 1$ or equivalently $y_2\geq 0$). In contrast, the sets associated \eqref{relbdunion1} yield non-trivial valid inequalities, but do not always have clear explicit descriptions (e.g. $\tilde{B}$ yields $-x_2\leq (3/2)y_1+(5/4)(1-y_1)$, but the non-linear inequality associated to $\hat{B}$ is harder to describe). Fortunately, it is sometimes possible to directly obtain a finite piecewise description of $Q\bra{\mathcal{C}}$. The first step is to describe $Q\bra{\mathcal{C}}$ as a finite intersection of similar sets, but with known descriptions.
\begin{proposition}\label{newtheoref}Let $\mathcal{C}:=\set{C^i}_{i=1}^k\in \mathbb{C}_n$,   $\mathcal{C}^j:=\set{C^{j,i}}_{i=1}^k\in \mathbb{C}_n$ for each $j\in \sidx{m}$ and $U=\bigcap_{i=1}^k \dom\bra{\sigma_{C^i}} \setminus\set{{\bf 0}}$ or $U=\Real^n$. If
\begin{subequations}\label{parboth}
\begin{alignat}{3}
C^i&\subseteq C^{j,i} &\quad& \forall j\in \sidx{m},\,i\in \sidx{k}\label{par1}\\
\forall u\in U\quad \exists j\in\sidx{m} \text{ s.t. }\sigma_{C^i}\bra{u}&=\sigma_{C^{j,i}}\bra{u}&\quad& \forall i\in \sidx{k}
,\label{par2}
\end{alignat}
\end{subequations}
then $Q\bra{\mathcal{C}}=\bigcap_{j=1}^m Q\bra{\mathcal{C}^{j}}$.
\end{proposition}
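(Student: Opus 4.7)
The forward inclusion $Q(\mathcal{C}) \subseteq \bigcap_{j=1}^m Q(\mathcal{C}^j)$ is immediate from \eqref{par1}: since $C^i \times \set{\e^i} \subseteq C^{j,i} \times \set{\e^i}$ for every $i, j$, taking convex hulls gives $Q(\mathcal{C}) \subseteq Q(\mathcal{C}^j)$ for each $j$, and intersecting over $j$ yields the inclusion. The real work is the reverse inclusion, and the plan is to reduce it to a pointwise inequality between support functions and then discharge that inequality via a short case analysis driven by condition \eqref{par2}.

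For the reverse inclusion, Proposition~\ref{embpart} guarantees that $Q(\mathcal{C})$ is closed and convex and $\bigcap_j Q(\mathcal{C}^j)$ plainly is, so it suffices to establish $\sigma_{\bigcap_j Q(\mathcal{C}^j)}(d,\alpha) \leq \sigma_{Q(\mathcal{C})}(d,\alpha)$ for every $(d,\alpha) \in \Real^{n+k}$. Because the support function agrees on a set and on its convex hull, I would compute
\[
\sigma_{Q(\mathcal{C})}(d,\alpha) = \max_{i \in \sidx{k}}\bra{\sigma_{C^i}(d) + \alpha_i}
\]
and analogously $\sigma_{Q(\mathcal{C}^j)}(d,\alpha) = \max_i (\sigma_{C^{j,i}}(d) + \alpha_i)$. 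Using the elementary bound $\sigma_{\bigcap_j Q(\mathcal{C}^j)}(d,\alpha) \leq \min_j \sigma_{Q(\mathcal{C}^j)}(d,\alpha)$, the problem reduces to proving the pointwise inequality
\[
\min_{j \in \sidx{m}} \max_{i \in \sidx{k}} \bra{\sigma_{C^{j,i}}(d) + \alpha_i} \;\leq\; \max_{i \in \sidx{k}} \bra{\sigma_{C^i}(d) + \alpha_i}. \qquad (\star)
\]

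To finish I would split on whether $d \in U$. If $d \in U$, condition \eqref{par2} supplies some index $j^\star$ with $\sigma_{C^{j^\star,i}}(d) = \sigma_{C^i}(d)$ for every $i$, so using $j^\star$ in the outer minimum of $(\star)$ turns its two sides into the same expression. If $U = \Real^n$ this already covers everything; if instead $U = \bigcap_i \dom\bra{\sigma_{C^i}} \setminus \set{{\bf 0}}$, only $d = {\bf 0}$ (both sides of $(\star)$ equal $\max_i \alpha_i$) and $d$ with $\sigma_{C^{i_0}}(d) = +\infty$ for some $i_0$ (the right-hand side of $(\star)$ is already $+\infty$) remain, and both are trivial.

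The main obstacle is conceptual rather than technical: recognizing that hypotheses \eqref{par1} and \eqref{par2} together encode exactly the pointwise support-function inequality characterizing closed convex containment, with \eqref{par1} handling the directions in which some $\sigma_{C^i}$ blows up and \eqref{par2} matching the finite directions exactly. Once this viewpoint is adopted, the remaining steps are routine, with the only delicate point being the bookkeeping around the two admissible choices of $U$.
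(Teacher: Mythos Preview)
Your proposal is correct and follows essentially the same route as the paper: both arguments establish the reverse inclusion by comparing support functions, invoke the identity $\sigma_{Q(\mathcal{C})}(d,\alpha)=\max_i\bigl(\sigma_{C^i}(d)+\alpha_i\bigr)$, and dispatch the cases $d=\mathbf{0}$, $d\notin\bigcap_i\dom(\sigma_{C^i})$, and $d\in U$ exactly as you do. The only cosmetic difference is that you record the intermediate bound $\sigma_{\bigcap_j Q(\mathcal{C}^j)}\leq\min_j\sigma_{Q(\mathcal{C}^j)}$ and state the reduced inequality $(\star)$ explicitly, whereas the paper bounds $\sigma_Q$ directly by $\sigma_{Q(\mathcal{C}^{j^\star})}$ for the index $j^\star$ supplied by \eqref{par2}.
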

\begin{proof}
Let $Q=\bigcap_{j=1}^m Q\bra{\mathcal{C}^{j}}$. Condition \eqref{par1} implies $Q\bra{\mathcal{C}}\subseteq Q$. For $Q\subseteq Q\bra{\mathcal{C}}$ we show
$\sigma_{ Q}\bra{u,v}\leq \sigma_{Q\bra{\mathcal{C}} } \bra{u,v}$. If $\sigma_{Q\bra{\mathcal{C}} } \bra{u,v}=\infty$ this holds trivially,  so we  assume $\bra{u,v}\in \dom\bra{\sigma_{Q\bra{\mathcal{C}}}}$. By Theorem 3.3.2 in \cite{hiriart-lemarechal-2001} we have
\begin{equation}\label{maxsup}
\sigma_{Q\bra{\mathcal{C}}}\bra{u,v}=\max\nolimits_{i=1}^k \sigma_{C^i}\bra{u}+ v\cdot e^i.
\end{equation}
Then $\dom\bra{\sigma_{Q\bra{\mathcal{C}}}}= \bra{\bigcap_{i=1}^k \dom\bra{\sigma_{C^i}}}\times \Real^k$ and $u\in\dom\bra{\sigma_{C^i}}$ for all $i\in \sidx{k}$.
If $u=0$, then $\sigma_{Q}\bra{u,v}=\sigma_{Q\bra{\mathcal{C}} } \bra{u,v}=\max_{i=1}^k v_i$.  For $u\neq 0$ let $j\in \sidx{m}$ be the index from condition \eqref{par2}. Combining this condition with \eqref{maxsup} and Theorem 3.3.2 in \cite{hiriart-lemarechal-2001}  for $Q\bra{\mathcal{C}^{j}}$ we finally have
$\sigma_{ Q}\bra{u,v}\leq \sigma_{Q\bra{\mathcal{C}^{j}}}\bra{u,v}=\max\nolimits_{i=1}^k \sigma_{C^{j,i}}\bra{u}+ v\cdot e^i=\sigma_{Q\bra{\mathcal{C}}}\bra{u,v}$. \qed
\end{proof}
The second step is to combine Proposition~\ref{newtheoref} with the known descriptions of the $Q\bra{\mathcal{C}^{j}}$. For instance, below we  combine it  with Proposition~\ref{homothetict}.
\begin{corollary}\label{generalhcoro}
Let $\mathcal{C}:=\set{C^i}_{i=1}^k\in \mathbb{C}_n$ and 	for each $j\in \sidx{m}$ let $C^{j,0}\subseteq \Real^n$ with ${\bf 0}\in C^{j,0}$,  $\set{b^{j,i}}_{i=1}^k\subseteq \mathbb{R}^n$,  $r^j\in \Real^k_+\setminus \set{0}$ and $\mathcal{C}^j:=\set{C^{j,i}}_{i=1}^k$ be such that $C^{j,i}=r^j_i C^{j,0}+b^{j,i} + C^{j,0}_\infty$ for all $i\in \sidx{k}$ and $j\in \sidx{m}$. If \eqref{parboth} holds for $\set{\mathcal{C}^j}_{j=1}^m$, then an ideal formulation for $x\in \bigcup_{i=1}^k C^i$ is given by
\begin{equation}\label{complexform}
  \gamma_{C^{j,0}}\left(x- \sum_{i=1}^k y_i b^{j,i}\right)\leq \sum_{i=1}^k r^j_i y_i \; \forall j\in \sidx{m},\quad
   \sum_{i=1}^k y_i=1,\quad y\in\set{0,1}^k.
\end{equation}
\end{corollary}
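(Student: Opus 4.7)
The plan is to chain together three prior results: Proposition~\ref{embpart} reduces the construction of an ideal formulation to describing the set $Q(\mathcal{C})$; Proposition~\ref{newtheoref} expresses $Q(\mathcal{C})$ as a finite intersection of the sets $Q(\mathcal{C}^j)$ under hypothesis \eqref{parboth}; and Proposition~\ref{homothetict} supplies an explicit gauge description for each $Q(\mathcal{C}^j)$, since $\mathcal{C}^j$ is nearly-homothetic by construction.

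I would first verify that each $\mathcal{C}^j\in\mathbb{C}_n$. Because $\mathbf{0}\in C^{j,0}$, the set $r^j_i C^{j,0}+b^{j,i}+C^{j,0}_\infty$ is nonempty and has recession cone $C^{j,0}_\infty$ independently of $i$ (regardless of whether $r^j_i$ vanishes or is strictly positive), so all members of $\mathcal{C}^j$ share a common recession cone. With this in hand, Proposition~\ref{homothetict} applied to $\mathcal{C}^j$ produces $Q(\mathcal{C}^j)$ as the set of $(x,y)$ satisfying the $j$-th gauge inequality of \eqref{complexform} together with $\sum_{i=1}^k y_i=1$ and $y\geq \mathbf{0}$. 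Since \eqref{parboth} is assumed for $\{\mathcal{C}^j\}_{j=1}^m$, Proposition~\ref{newtheoref} then yields $Q(\mathcal{C})=\bigcap_{j=1}^m Q(\mathcal{C}^j)$; intersecting these descriptions collects all $m$ gauge inequalities while collapsing the common affine and nonnegativity constraints into a single copy, which is exactly the continuous relaxation of \eqref{complexform}.

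To finish, I would invoke the third part of Proposition~\ref{embpart}: since $\sum_{i=1}^k y_i=1$, $y\geq\mathbf{0}$, and $y\in\mathbb{Z}^k$ together force $y\in\set{0,1}^k$, the system \eqref{complexform} is equivalent to $(x,y)\in Q(\mathcal{C})$ with $y\in\mathbb{Z}^k$, and is therefore an ideal formulation of $x\in\bigcup_{i=1}^k C^i$. The only mildly delicate point in the argument is the recession-cone check for $\mathcal{C}^j$ when some $r^j_i$ vanish, which is precisely what the explicit $+C^{j,0}_\infty$ summand in the definition of $C^{j,i}$ is designed to handle; beyond this, the corollary is a clean composition of previously developed machinery and presents no substantive obstacle.
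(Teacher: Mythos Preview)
Your proposal is correct and follows precisely the approach the paper intends: the corollary is presented immediately after Proposition~\ref{newtheoref} as the combination of that proposition with Proposition~\ref{homothetict}, with idealness inherited from Proposition~\ref{embpart}. Your added verification that each $\mathcal{C}^j\in\mathbb{C}_n$ (via the common recession cone $C^{j,0}_\infty$, including the $r^j_i=0$ case) is a detail the paper leaves implicit but which is indeed needed to invoke Proposition~\ref{newtheoref}.
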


\begin{example}\label{example2b} Let $C^1$ and  $C^2$ again be the sets from Example~\ref{example1} depicted in Figures~\ref{ex1fig} and \ref{normalfigb}. To  construct an ideal formulation for $x\in C^1\cup C^2$ we divide directions $u\in \Real^2\setminus\set{0}$ for condition \eqref{par2}  into four  classes.
For each $s\in \set{-1,1}^2$ let $C^{s,1}:=\set{x\in \Real^2\,:\, \bra{2- s_1  x_1 }\bra{2- s_2  x_2 }\geq 1,\; s_j  x_j \leq 3/2\; \forall i\in \sidx{2}}$, $C^{s,2}:=\set{x\in \Real^2\,:\, s_j  x_j \leq 5/4\quad \forall i\in \sidx{2}}$   and $D_s:=\set{u\in \Real^2\,:\, s_1  x_1 \geq 0,\quad s_2  x_2 \geq 0}$.
For $s=(1,-1)$, Figure~\ref{normalfigb} depicts $C^{s,1}$ and $C^{s,2}$ in light gray and illustrates how condition \eqref{parboth} is satisfied: for each $i\in \sidx{2}$, $s\in \set{-1,1}$ and $u\in D_s$ we have $\sigma_{C^i}\bra{u}=\sigma_{C^{s,i}}\bra{u}$ and $C^i\subseteq C^{s,i}$. Finally, if we let  $C^{s,0}=C^{s,1}$, $r^s_1=1$, $b^{s,1}=(0,0)^T$,  $r^s_1=0$ and  $b^{s,1}=\bra{s_1(5/4),s_2(5/4)}^T$ we have $C^{s,i}=r^s_i C^{s,0}+b^{s,i} + C^{s,0}_\infty$ for all $i\in \sidx{2}$ and
\[\epi\bra{\gamma_{C^{s,0}}}=\set{\bra{x,y}\in \Real^3\,:\, \begin{alignedat}{3}\norm{\bra{2 y,\; s_1  x_1-s_2  x_2}}_2&\leq 4y-s_1 x_2-s_2 x_2,\\ s_j  x_j &\leq (3/2)y \quad \forall j\in \sidx{2}\end{alignedat}}.\]
Then \eqref{complexform} yields the ideal formulation of $x\in C^1\cup C^2$ given by
\begin{subequations}\label{idealofex1}
\begin{alignat}{3}
\norm{\bra{2 y_1,\; s_1  x_1-s_2  x_2}}_2&\leq 4y_1+(5/2)y_2-s_1 x_2-s_2 x_2 \quad \forall s\in \set{-1,1}^2,\\
 s_j  x_j &\leq (3/2)y_1 +(5/4) y_2\quad \forall j\in \sidx{2},\;s\in \set{-1,1}^2,\label{reducelinearineq}\\
 \quad y_1+y_2&=1,\quad y\in \set{0,1}^2.
\end{alignat}
\end{subequations}
where we used the fact that $s_1 ^2=s_2 ^2=1$ for all $s\in\set{-1,1}^2$ to simplify the nonlinear inequalities in $x$ and $y$.\qed
\end{example}

Note that the key to effectively satisfy condition \eqref{par2} was to include $s_j  x_j \leq 3/2$ in the definition of $C^{s,1}$. Indeed, as can be glimpsed from Figure~\ref{normalfigb} if we omitted these constraints for $s=(1,-1)$, we would have $\sigma_{C^1}\bra{0,-1}=3/2<2=\sigma_{C^{s,1}}\bra{0,-1}$. Another way to understand the need for these inequalities is by noting that for $s=(1,-1)$ they ensure that  $N_{C^{s,1}}\bra{\tilde{x}^1}\cap N_{C^{s,2}}\bra{x^2}\neq \emptyset$ for $\tilde{x}^1:=\bra{0,-3/2}$ and all $x^2\in B^2:=\set{x\in C^2\,:\,x_2=-5/4}$ (cf. white boundary subsets depicted in Figure~\ref{normalfigb} and discussed in Example~\ref{example2}). This last observation can be useful to construct families $\set{\mathcal{C}^j}_{j=1}^m$ that satisfy condition \eqref{par2} (and verify that they do satisfy it) so we formalize it in Corollary~\ref{alternativetheo} of Section~\ref{necandsufsec}. However, we first showcase some important applications where    \eqref{par2} can be easily verified.

\section{Applications of Proposition~\ref{newtheoref}}\label{applicasec}

While Proposition~\ref{newtheoref} and Corollary~\ref{generalhcoro} are simple, together with Proposition~\ref{generalorthprop} they can recover and generalize all known results from the literature.
\subsection{Unions of Polyhedra}\label{polysec}

The first result that Corollary~\ref{generalhcoro} can generalize is the following class of formulations introduced by Balas, Blair and Jeroslow \cite{balas88,blair90,jeroslow88}.

\begin{definition}
For any $A\in \mathbb{R}^{m\times n}$ and $B\subseteq \sidx{m}$ let $A_B\in\mathbb{R}^{\abs{B}\times n}$ be the sub-matrix of $A$ composed of the rows indexed by $B$.
For a fixed $A\in \mathbb{R}^{m\times n}$ let $\mathcal{B}=\set{B\subseteq \sidx{m}\,:\, \abs{B}=\rank(A), \quad \rank\bra{A_B}   =\rank(A)}$, and for any $B\in \mathcal{B}$ and $b\in \Real^m$ let $P\bra{B,b}:=\set{x\in \Real^n\,:\, A_B x\leq b_B}$ and $\bar{x}\bra{B,b}\in \Real^n$ be an arbitrary solution of $A_B x=b_B$.
\end{definition}

\begin{theorem}[Theorem 2 in \cite{blair90}]\label{blairprop} Let $A\in \mathbb{R}^{m\times n}$ and for each $i\in \sidx{k}$ let $b^i\in \Real^m$ and $P^i=\set{x\in \Real^m\,:\,Ax\leq b^i}$. If
\[\forall B\in \mathcal{B}\quad \bra{\bar{x}\bra{B,b^i}\in P^i\quad \forall i\in \sidx{k}} \quad\vee \quad\bra{\bar{x}\bra{B,b^i}\notin P^i\quad \forall i\in \sidx{k}},\]
then an ideal formulation of $x\in \bigcup_{i=1}^k P^i$ is given by
 \begin{subequations}\label{blairform}
\begin{alignat}{3}
Ax \leq \sum\nolimits_{i=1}^k b^i y_i,\quad \sum\nolimits_{i=1}^k y_i=1,\quad y\in \set{0,1}^k.
\end{alignat}
\end{subequations}
\end{theorem}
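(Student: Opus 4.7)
The plan is to apply Corollary~\ref{generalhcoro} with a family indexed by the bases $\mathcal{B}$ of $A$. For each $B\in\mathcal{B}$ define $C^{B,0} := \set{x\in\Real^n\,:\, A_B x\leq {\bf 0}}$, which is a polyhedral cone containing ${\bf 0}$ satisfying $C^{B,0}_\infty=C^{B,0}$, and set $b^{B,i}:=\bar{x}\bra{B,b^i}$ and $r^B_i:=1$ for every $i\in\sidx{k}$. Then $r^B_i C^{B,0}+b^{B,i}+C^{B,0}_\infty = b^{B,i}+C^{B,0}=P\bra{B,b^i}$, so the role of $C^{j,i}$ in the Corollary is played by $P\bra{B,b^i}$. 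Condition \eqref{par1} holds trivially because $P^i\subseteq P\bra{B,b^i}$ for every $B$ (dropping inequalities enlarges the set).

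The crucial step is verifying condition \eqref{par2}: for any nonzero $u\in\bigcap_{i=1}^k\dom\bra{\sigma_{P^i}}$ we must exhibit $B\in\mathcal{B}$ with $\sigma_{P^i}\bra{u}=\sigma_{P\bra{B,b^i}}\bra{u}$ for \emph{every} $i\in\sidx{k}$. By LP duality $\dom\bra{\sigma_{P^i}}$ equals the dual feasibility cone $\set{A^\top\mu\,:\, \mu\geq {\bf 0}}$, which depends only on $A$, so for such a $u$ each LP $\max\set{u\cdot x\,:\, Ax\leq b^i}$ is bounded and attains its maximum at some vertex $\bar{x}\bra{B^i,b^i}$ with $B^i\in\mathcal{B}$. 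Fix any $i_0$ and set $B:=B^{i_0}$; then $u\in\cone\set{A_j^\top\,:\, j\in B}$, a normal cone that depends on $A$ and $B$ alone. Since $\bar{x}\bra{B,b^{i_0}}\in P^{i_0}$, the Blair--Jeroslow hypothesis forces $\bar{x}\bra{B,b^i}\in P^i$ for every $i\in\sidx{k}$; combined with the $b^i$-independent normal-cone condition on $u$, this makes $\bar{x}\bra{B,b^i}$ an optimal point of $P^i$ in direction $u$, yielding $\sigma_{P^i}\bra{u}=u\cdot \bar{x}\bra{B,b^i}=\sigma_{P\bra{B,b^i}}\bra{u}$ for all $i$.

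Corollary~\ref{generalhcoro} now produces the ideal formulation $\gamma_{C^{B,0}}\bra{x-\sum_{i=1}^k y_i b^{B,i}}\leq \sum_{i=1}^k r^B_i y_i$ indexed by $B\in\mathcal{B}$, together with $\sum_{i=1}^k y_i=1$ and $y\in\set{0,1}^k$. Because $C^{B,0}$ is a cone, $\gamma_{C^{B,0}}\bra{v}\leq r$ reduces to $A_B v\leq {\bf 0}$ for any $r\geq 0$, and since $A_B\bar{x}\bra{B,b^i}=b^i_B$ the gauge constraint becomes $A_B x\leq\sum_{i=1}^k y_i b^i_B$. Every nonzero row of $A$ can be extended by Steinitz exchange to a set of $\rank\bra{A}$ linearly independent rows of $A$ and so appears in some $B\in\mathcal{B}$, while zero rows contribute inequalities that are vacuous given the nonemptiness of each $P^i$. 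Aggregating the basis-restricted inequalities across all $B\in\mathcal{B}$ thus recovers exactly $Ax\leq\sum_{i=1}^k y_i b^i$, which together with $\sum_i y_i=1$ and $y\in\set{0,1}^k$ is \eqref{blairform}.

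The main obstacle is the verification of \eqref{par2}: a priori there is no reason a single basis $B$ should simultaneously deliver an optimal \emph{feasible} vertex of every $P^i$ in the same direction $u$. This is precisely what the Blair--Jeroslow hypothesis buys, by transporting feasibility of $\bar{x}\bra{B,b^{i_0}}$ to all other $i$; the argument then closes thanks to the observation that optimality of a basis in a prescribed direction is a statement about the common matrix $A$ alone, and is therefore preserved when the right-hand side is changed from $b^{i_0}$ to $b^i$.
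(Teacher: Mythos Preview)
Your proof is correct and follows the same route as the paper: it instantiates Corollary~\ref{generalhcoro} with the basis-indexed family $\mathcal{C}^B=\set{P\bra{B,b^i}}_{i=1}^k$, exactly as in the proof of Corollary~\ref{blairpropplus}, and then simplifies the resulting gauge constraints to \eqref{blairform}. The paper merely asserts that the Blair--Jeroslow hypothesis implies condition~\eqref{blairproppluscondition} (equivalently~\eqref{par2}), whereas you supply the explicit LP-duality argument for this implication; otherwise the two proofs coincide.
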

Corollary~\ref{generalhcoro} generalizes Theorem~\ref{blairprop} as follows.
\begin{corollary}\label{blairpropplus}Let $A\in \mathbb{R}^{m\times n}$ and for each $i\in \sidx{k}$ let $b^i\in \Real^m$ and $P^i=\set{x\in \Real^m\,:\,Ax\leq b^i}$. If for all $c\in \Real^n$ there exist $B\in \mathcal{B}$ such that
\begin{equation}\label{blairproppluscondition}
\max\set{c\cdot x\,:\, x\in P\bra{B,b^i}}=\max\set{c\cdot x\,:\,x\in P^i} \quad \forall i\in \sidx{k},
\end{equation}
then \eqref{blairform} is an ideal formulation of $x\in \bigcup_{i=1}^k P^i$ .
\end{corollary}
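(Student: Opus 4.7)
The plan is to derive Corollary~\ref{blairpropplus} as a direct application of Corollary~\ref{generalhcoro}, by indexing the auxiliary families by the bases $B\in \mathcal{B}$. Concretely, for each $B\in \mathcal{B}$ I would set $C^{B,0}:=\set{x\in \Real^n\,:\, A_B x\leq {\bf 0}}$, $r^B:={\bf 1}\in \Real^k$, and $b^{B,i}:=\bar{x}(B,b^i)$. Since $C^{B,0}$ is a cone (so $C^{B,0}_\infty=C^{B,0}$) and $A_B\bar{x}(B,b^i)=b^i_B$, one gets $C^{B,i}:=r^B_i C^{B,0}+b^{B,i}+C^{B,0}_\infty=P(B,b^i)$, and the family $\mathcal{C}^B=\set{C^{B,i}}_{i=1}^k$ lies in $\mathbb{C}_n$ because its recession cone $C^{B,0}$ does not depend on $i$. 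The original family $\set{P^i}_{i=1}^k$ also lies in $\mathbb{C}_n$ with common recession cone $\set{x\,:\, Ax\leq {\bf 0}}$ (assuming non-emptiness of each $P^i$, which is implicit in the statement).

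Next I would verify the two conditions of \eqref{parboth} taking $U=\Real^n$. Condition \eqref{par1} is immediate since $P^i$ is obtained from $P(B,b^i)$ by adding inequalities, so $P^i\subseteq P(B,b^i)$. For condition \eqref{par2}, note that $\sigma_{P^i}(c)=\max\set{c\cdot x\,:\, x\in P^i}$ and $\sigma_{P(B,b^i)}(c)=\max\set{c\cdot x\,:\, x\in P(B,b^i)}$ (with the understanding that both equal $+\infty$ when the max is unattained), so the hypothesis \eqref{blairproppluscondition} is precisely \eqref{par2} for every $c\in \Real^n\setminus\set{\bf 0}$; the case $c={\bf 0}$ is trivial.

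With the hypotheses of Corollary~\ref{generalhcoro} met, formulation \eqref{complexform} for this data reads, for each $B\in \mathcal{B}$, $\gamma_{C^{B,0}}\bra{x-\sum_{i=1}^k y_i \bar{x}(B,b^i)}\leq \sum_{i=1}^k y_i=1$. Because $C^{B,0}$ is a closed convex cone containing the origin, Lemma~\ref{gaugepropertiesprop} yields $\gamma_{C^{B,0}}(w)\leq 1$ iff $w\in C^{B,0}$, i.e.\ iff $A_B w\leq {\bf 0}$. Using $A_B \bar{x}(B,b^i)=b^i_B$ this constraint simplifies to $A_B x\leq \sum_{i=1}^k b^i_B y_i$. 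Hence the ideal formulation from Corollary~\ref{generalhcoro} is precisely
\begin{equation*}
A_B x\leq \sum\nolimits_{i=1}^k b^i_B y_i\quad \forall B\in \mathcal{B},\qquad \sum\nolimits_{i=1}^k y_i=1,\qquad y\in \set{0,1}^k.
\end{equation*}

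To conclude I would show that this system is equivalent to \eqref{blairform}. Every nonzero row of $A$ lies in the row space of $A$ and can therefore be extended to a set of $\rank(A)$ linearly independent rows, so it appears in at least one $B\in \mathcal{B}$; this shows each nonzero row inequality of $Ax\leq \sum_i b^i y_i$ is among those generated above. Rows of $A$ that are identically zero give $0\leq \sum_i b^i_j y_i$, which is valid under $\sum_i y_i=1$, $y\geq {\bf 0}$ as soon as $P^i\neq \emptyset$ forces $b^i_j\geq 0$. Therefore the two systems describe the same polyhedron, and \eqref{blairform} is an ideal formulation of $x\in \bigcup_{i=1}^k P^i$. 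The only subtle point is the verification of \eqref{par2} in the extended-real sense when some $\sigma_{P^i}(c)=+\infty$; I would handle this by interpreting the equality in \eqref{blairproppluscondition} as equality in $\Real\cup\set{+\infty}$, which matches the convention used throughout Proposition~\ref{newtheoref}.
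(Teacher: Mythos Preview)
Your proof is correct and follows essentially the same route as the paper: apply Corollary~\ref{generalhcoro} with the basis-indexed families $\mathcal{C}^B$ built from $P(B,b^i)=P(B,{\bf 0})+\bar{x}(B,b^i)$, verify \eqref{parboth} via the corollary's hypothesis, and then simplify the gauge constraint to $A_B x\leq \sum_i b^i_B y_i$. Your treatment is slightly more careful than the paper's in one respect: the paper asserts $\sidx{m}=\bigcup_{B\in\mathcal{B}}B$ without comment, while you separately dispose of zero rows of $A$ (which indeed cannot belong to any $B\in\mathcal{B}$).
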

\begin{proof} For all $B\in \mathcal{B}$ let $\mathcal{C}^B:=\set{C^{B,i}}_{i=1}^k$ be such that $C^{B,i}=P\bra{B,b^i}=P\bra{B,{\bf 0}}+\bar{x}\bra{B,b^i}$ for all $i\in \sidx{k}$. Condition \eqref{par1} is trivially satisfied and condition \eqref{par2} is satisfied by the corollary's assumption. The result follows from Corollary~\ref{generalhcoro} by noting
that $\sidx{m}=\bigcup_{B\in \mathcal{B}} B$, that because $\epi\bra{\gamma_{P\bra{B,{\bf 0}}}}=\set{\bra{x,y}\in \Real^{n+1}\,:\, y\geq 0,\quad  A_B x\leq 0}$ we have  $\bra{x,y}\in Q\bra{\mathcal{C}^B}$ if and only if
\begin{alignat*}{3}
A_B\bra{x-\sum\nolimits_{i=1}^k \bar{x}\bra{B,b^i} y_i}=A_B x - \sum\nolimits_{i=1}^k  b^i_B y_i &\leq 0 &\quad& \\
\sum\nolimits_{i=1}^k y_i=1,\quad y_i&\geq 0 &\quad& \forall i\in \sidx{k}.\quad\quad\quad\qed
\end{alignat*}

\end{proof}
The sufficient condition of Theorem~\ref{blairprop} implies that of Corollary~\ref{blairpropplus}, but the following example adapted from \cite{Mixed-Integer-Linear-Programming-Formulation-Techniques} shows that the converse may not hold.
\begin{example}Consider \[A=\left[\begin{array}{rrr} 1 &0 &1\\ -1 & 0 &1 \\ 0 &1 &1\\ 0 &-1&\phantom{-} 1\end{array}\right],\quad b^1=\begin{pmatrix} 1\\1\\2\\2\end{pmatrix},\quad
	b^2=\begin{pmatrix} 2\\2\\1\\1\end{pmatrix}.\]
We can check that $B_1:=\set{1,2,3}\in \mathcal{B}$, $\bar{x}\bra{B_1,b^1}=\bra{0,1,1}\in P^1$ and $\bar{x}\bra{B_1,b^2}=\bra{0,-1,2}\notin P^2$. Furthermore, \[\max\set{x_3\,:\, x\in P\bra{B_1,b^2}}=2>1=\max\set{x_3\,:\,x\in P^2}.\]
Then, neither Theorem~\ref{blairprop} nor Corollary~\ref{blairpropplus} are applicable and indeed formulation \eqref{blairform} for these matrix/vectors is not ideal ($x=\bra{0,0,3/2}$ and $y=\bra{1/2,1/2}$ is an extreme point of its LP relaxation). However, if we augment $A$, $b^1$ and $b^2$ with the redundant inequality $x_3\leq 1$ (i.e. let the fifth row of $A$ be $\bra{0,0,1}$ and $b^1_5=b^2_5=1$) we have that  $B_2:=\set{1,2,5}\in \mathcal{B}$ and
\[\max\set{x_3\,:\, x\in P\bra{B_2,b^2}}=1=\max\set{x_3\,:\,x\in P^2}.\]
Moreover, with this additional inequality/row we have that for any $u\in \Real^3 $ condition \eqref{blairproppluscondition} either holds trivially (i.e. with $+\infty$ on both sides) or for a basis of the form $B=\set{i,j,5}$ for $i,j\in \sidx{4}$. Hence, Corollary~\ref{blairpropplus} shows that \eqref{blairform} for this augmented matrix/vectors does yield an ideal formulation for $x\in P^1\cup P^2$. In contrast, we still have $\bar{x}\bra{B_1,b^1}\in P^1$ and $\bar{x}\bra{B_1,b^2}\notin P^2$ for the augmented matrix/vectors so Theorem~\ref{blairprop} cannot be used to prove that this formulation is ideal.\qed
\end{example}

Theorem~\ref{blairprop} and Corollary~\ref{blairpropplus} are based on exploiting a common tangent structure of the  $P^i$. This can also be useful to (partially) satisfy  condition  \eqref{parboth} for non-polyhedral sets so we give one formalization of the approach.

\begin{lemma}\label{commontangent1} Let  $\mathcal{C}:=\set{C^i}_{i=1}^k\in \mathbb{C}_n$, $\set{x^{j,i}}_{j=1}^m\subseteq \bd\bra{C^i}$ for all $i\in \sidx{k}$,  $\mathcal{C}^j:=\set{C^{j,i}}_{i=1}^k\in \mathbb{C}_n$ for all $j\in \sidx{m}$ and $C^{j,0}\subseteq \Real^n$ be a closed convex cone for all $j\in \sidx{m}$. If  $C^{j,i}=T_{C^i}\bra{x^{j,i}}=x^{j,i}+C^{j,0}$ for all $i\in \sidx{k}$ and $j\in \sidx{m}$, then $\set{\mathcal{C}^j}_{j=1}^m$ satisfies  \eqref{parboth} for $U=\bigcup_{j=1}^m \bra{C^{j,0}}^*$.
\end{lemma}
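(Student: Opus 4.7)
The plan is to verify conditions \eqref{par1} and \eqref{par2} directly from the definitions. Condition \eqref{par1} is essentially free: since $x^{j,i}\in\bd\bra{C^i}$ and $C^{j,i}=T_{C^i}\bra{x^{j,i}}$ is the tangent cone of a convex set at a boundary point, we have $C^i\subseteq T_{C^i}\bra{x^{j,i}}=C^{j,i}$ for every $i\in\sidx{k}$ and $j\in\sidx{m}$, because $T_{C^i}\bra{x^{j,i}}= x^{j,i}+\cl\bra{\cone\bra{C^i-x^{j,i}}}$ contains $C^i$.

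For condition \eqref{par2}, I would fix $u\in U$ and pick an index $j\in\sidx{m}$ with $u\in\bra{C^{j,0}}^*$ (such a $j$ exists by definition of $U$). The key observation is that the common cone $C^{j,0}$ coincides (up to translation) with the tangent cone $T_{C^i}\bra{x^{j,i}}-x^{j,i}$ for every $i\in\sidx{k}$, so its polar satisfies $\bra{C^{j,0}}^*=N_{C^i}\bra{x^{j,i}}$ for every $i$. Therefore $u\in N_{C^i}\bra{x^{j,i}}$, which by the defining inequality of the normal cone gives $u\cdot y\leq u\cdot x^{j,i}$ for every $y\in C^i$, combined with $x^{j,i}\in C^i$ yielding $\sigma_{C^i}\bra{u}=u\cdot x^{j,i}$.

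On the formulation side, I would then compute $\sigma_{C^{j,i}}\bra{u}$ directly from the representation $C^{j,i}=x^{j,i}+C^{j,0}$. Since $C^{j,0}$ is a closed convex cone, its support function equals $0$ on $\bra{C^{j,0}}^*$ (and $+\infty$ elsewhere), so
\begin{equation*}
\sigma_{C^{j,i}}\bra{u}=u\cdot x^{j,i}+\sigma_{C^{j,0}}\bra{u}=u\cdot x^{j,i}.
\end{equation*}
Combining this with the previous computation gives $\sigma_{C^i}\bra{u}=\sigma_{C^{j,i}}\bra{u}$ for every $i\in\sidx{k}$, which is precisely condition \eqref{par2}.

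The only delicate point is verifying that $\bra{C^{j,0}}^*=N_{C^i}\bra{x^{j,i}}$ under the convention used for $T_{C^i}\bra{x^{j,i}}$ in the lemma; once one fixes the translation convention (so that $C^{j,0}$ is the cone of feasible directions at $x^{j,i}$), this is just the standard fact that the normal cone is the polar of the tangent cone at a boundary point of a closed convex set. Everything else is a short direct computation, so no serious obstacle is anticipated.
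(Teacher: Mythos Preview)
Your proposal is correct and follows essentially the same approach as the paper. The paper's proof is the one-liner ``Direct from $\sigma_{C}\bra{u}=\sigma_{T_C\bra{x}}\bra{u}$ for all $x\in \bd\bra{C}$ and $u\in T_C\bra{x}^*$,'' and your argument simply unpacks this identity: you verify \eqref{par1} from $C^i\subseteq T_{C^i}\bra{x^{j,i}}$, identify $\bra{C^{j,0}}^*$ with $N_{C^i}\bra{x^{j,i}}$, and compute both support functions at $u\cdot x^{j,i}$ to obtain \eqref{par2}.
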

\begin{proof}Direct from $\sigma_{C}\bra{u}=\sigma_{T_C\bra{x}}\bra{u}$ for all $x\in \bd\bra{C}$ and $u\in T_C\bra{x}^*$.\qed
\end{proof}
\subsection{Common tangent structure through Minkowski sum}

Example~\ref{example2b} uses a ``nearly-homothetic'' variant of ``conic'' tangents of Lemma~\ref{commontangent1}. For instance, as illustrated in Figure~\ref{normalfigb} for $s=(1,-1)$ and $\bar{x}=(5/4,-5/4)$,  we have that $C^{s,2}=\bar{x}+C^{s,0}_\infty$ is the  cone tangent to $C^2$ at $\bar{x}$, but no translation of $C^{s,0}_\infty$ is tangent to $C^{1}$ at some $x\in \bd\bra{C^1}$. However, $C^{s,1}=C^1+C^{s,0}_\infty$ serves the same role as the translation of $C^{s,0}_\infty$ through the following property.
\begin{lemma}\label{commontangent2} Let $C\subseteq \Real^n$ be a closed convex set and $K\subseteq \Real^n$ be a closed convex cone. Then $\sigma_C\bra{u}=\sigma_{C+K}\bra{u}$ for all $u\in K^*$.
\end{lemma}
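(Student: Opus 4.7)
The plan is to exploit the well-known additivity of the support function under Minkowski sums, and then show that the polar cone condition $u\in K^*$ makes the support function of $K$ vanish, leaving only $\sigma_C(u)$.

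First I would write $\sigma_{C+K}(u)=\sup\set{u\cdot(x+k)\,:\, x\in C,\,k\in K}$. Since the supremum over a product separates additively, this equals $\sigma_C(u)+\sigma_K(u)$, with the usual conventions about $+\infty$. This step is routine and needs only the linearity of the inner product.

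Next I would compute $\sigma_K(u)$ for $u\in K^*$. By the paper's convention $K^*=\set{u\in\Real^n\,:\, u\cdot k\leq 0\;\forall k\in K}$, so $u\cdot k\leq 0$ for every $k\in K$, giving $\sigma_K(u)\leq 0$. On the other hand, $K$ is a closed convex cone so ${\bf 0}\in K$, yielding $\sigma_K(u)\geq u\cdot {\bf 0}=0$. Combining, $\sigma_K(u)=0$ whenever $u\in K^*$. Substituting back gives $\sigma_{C+K}(u)=\sigma_C(u)$, as claimed.

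There is really no obstacle here; the only subtlety worth flagging is the case where $\sigma_C(u)=+\infty$, in which case both sides are $+\infty$ (since $C\subseteq C+K$ forces $\sigma_{C+K}(u)\geq \sigma_C(u)$), so the identity still holds. The lemma itself is almost a one-liner once one observes that the polar cone is precisely the set of directions on which $K$'s support function is zero.
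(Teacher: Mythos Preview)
Your proof is correct and follows essentially the same approach as the paper: the paper's proof simply cites Theorem C.3.3.2 in \cite{hiriart-lemarechal-2001} for the additivity $\sigma_{C+K}(u)=\sigma_C(u)+\sigma_K(u)$ and the fact that $\sigma_K(u)=0$ for $u\in K^*$, whereas you spell out both steps explicitly (including the careful handling of the $+\infty$ case).
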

\begin{proof}
By Theorem C.3.3.2 in \cite{hiriart-lemarechal-2001} $\sigma_{C+K}\bra{u}=\sigma_{C}\bra{u}+\sigma_{K}\bra{u}$ and $\sigma_{K}\bra{u}=0$ for all $u\in K^*$. \qed
\end{proof}

The following example further illustrates this approach to guide the construction of $\set{\mathcal{C}^j}_{j=1}^m $ to use Corollary~\ref{generalhcoro}  for non-polyhedral sets with $k,n>2$. It also illustrates how redundancy in $\set{\mathcal{C}^j}_{j=1}^m $ can simplify verification of   \eqref{parboth}.

\begin{example}\label{generalexamplecone}
Let $b=\sqrt{2}-1$, $r\in \Real^k_+$, $\set{s^i}_{i=1}^k \subseteq \set{0,1}^2$,  $\set{\bra{p_0^i,p^i}}_{i=1}^k\subseteq \Real^{n+1}$, $G^i:=\set{\bra{x_0,x}\in \Real^{n+1}\,:\, \begin{alignedat}{3}\norm{\bra{x,s^i_l }}_2 &\leq  s^i_l \bra{\sqrt{2}-1}+1 + (-1)^l   x_0 \quad \forall l\in \sidx{2}\end{alignedat}}$ and $C^i=\bra{p_0^i,p^i }+r_i G^i$ for all $i\in \sidx{k}$.
Family $\mathcal{C}:=\set{C^i}_{i=1}^k$ is depicted in Figure~\ref{gcfig1} for $k=2$, $n=1$, $r=(1,1)$, $s^1=\bra{1,0}$, $s^1=\bra{0,1}$ and $\bra{p_0^i,p^i}=\bra{0,0}$ for all $i\in \sidx{2}$.
Let $\set{\mathcal{C}^j}_{j=1}^2$ be such that for each $j\in \sidx{2}$
\[C^{j,0}:=\set{\bra{x_0,x}\in \Real^{n+1}\,:\,\norm{\bra{x,1}}_2 \leq  \sqrt{2}+ (-1)^j x_0,\; \norm{x}_2 \leq  1 + (-1)^j x_0}\]
and $C^{j,i}=\bra{p_0^i-(-1)^j (1-s^i_j)r_i,p^i}+r_i s^i_j C^{j,0}$ for all $i\in \sidx{k}$. Then $C^{2,0}_\infty=-C^{1,0}_\infty=\set{\bra{x_0,x}\in \Real^{n+1}\,:\, \norm{x}_2 \leq  x_0}$ and for all $j\in \sidx{2}$ and $i\in \sidx{k}$ we have $C^{j,i}=T_{C^i}\bra{p^i_0-(-1)^j,p^i}$ if $s^i_j=0$ and $C^{j,i}=C^i+C^{j,0}_\infty$ if $s^i_j=1$. Then $\set{\mathcal{C}^j}_{j=1}^2$ satisfies  \eqref{parboth} for $U=\bra{C^1_\infty}^*\cup\bra{C^2_\infty}^*$. Furthermore, for each $j\in \sidx{2}$
\begin{alignat*}{3}\epi\bra{\gamma_{C^{j,0}}}&=\set{\bra{x_0,x,y}\in  \Real^{n+2}\,:\, \begin{alignedat}{3}\norm{\bra{x,y }}_2 &\leq  y (b +1) + (-1)^j  x_0,\\ \norm{x}_2 &\leq  y+ (-1)^j  x_0\end{alignedat}}
 \end{alignat*}
For  all $i\in \sidx{k}$ and $j\in \sidx{2}$ let $q^{i}_j=r_i s_j^i \sqrt{2}-(-1)^j p_0^j+(1-s_j^i)r_i$ and $h^{i}_j=r_i s_j^i -(-1)^j p_0^j+(1-s_j^i)r_i$. Because $C^i=C^{1,i}\cap C^{2,i}$ formulation \eqref{complexform} for yields the valid formulation of  $\bra{x_0,x}\in \bigcup_{i=1}^k C^i$ given by
\begin{subequations}\label{generalconeex}
\begin{alignat}{3}
\norm{\bra{x-\sum\nolimits_{i=1}^k p^i y_i, \sum\nolimits_{i=1}^k r_i s^i_j y_i} }_2 &\leq  \sum\nolimits_{i=1}^k q^{i}_j y_i+(-1)^j  x_0, \; \forall j\in \sidx{2} \\
 \norm{x-\sum\nolimits_{i=1}^k p^i y_i}_2 &\leq  \sum\nolimits_{i=1}^k h^i_j y_i +(-1)^j  x_0\label{generalconeexreds}, \; \forall j\in \sidx{2} \\
\sum\nolimits_{i=1}^k y_i&=1,\quad y\in \set{0,1}^k.
\end{alignat}
\end{subequations}
  We have  $\bra{C^1_\infty}^*\cup\bra{C^2_\infty}^*=C^2_\infty\cup C^1_\infty$  strictly contained in $\bigcap_{i=1}^k \dom\bra{\sigma_{C^i}} \setminus\set{0}$, so Corollary~\ref{generalhcoro} does not imply idealness of \eqref{generalconeex}. To check that it is indeed ideal let $\mathcal{C}^3$ be such that  $C^{3,0}:=\set{\bra{x_0,x}\in \Real^{n+1}\,:\,\norm{x}_2 \leq   1+  x_0,\quad \norm{x}_2 \leq   1-  x_0}$ and $C^{3,i}=\bra{p_0^i,p^i}+r_i C^{3,0}$ for all $i\in \sidx{k}$.
  Then  for all $i\in \sidx{k}$ and $u\in \Real^n \setminus \bra{C^2_\infty\cup C^1_\infty}$  we have  $C^i\subseteq C^{3,i}$ and $\sigma_{C^{3,i}}\bra{u}=\sigma_{C^i}\bra{u}$. Finally, we have
$
\epi\bra{\gamma_{C^{3}}}=\set{\bra{x_0,x,y}\in  \Real^{n+2}\,:\, \norm{x}_2 \leq   y+  x_0,\quad \norm{x}_2 \leq   y-  x_0}$.

We can now use Corollary~\ref{generalhcoro} for $\set{\mathcal{C}^j}_{j=1}^3$ to construct an ideal formulation of $\bra{x_0,x}\in \bigcup_{i=1}^k C^i$ that corresponds to \eqref{generalconeex} plus the inequalities associated to $\mathcal{C}^3$. However, these additional inequalities are precisely \eqref{generalconeexreds}. \qed
\end{example}

\begin{figure}[htbp]
\centering \includegraphics[scale=.35]{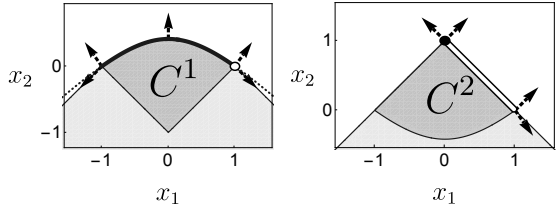}
\caption{Sets from Example~\ref{generalexamplecone}.}\label{gcfig1}
\end{figure}

Note that in Example~\ref{example2b} the approach based on Lemma~\ref{commontangent2} of adding $C^{s,0}_\infty$ to $C^1$ in the definition of $C^{s,1}$ (or equivalently including $s_j  x_j \leq 3/2$ in the definition of $C^{s,1}$) was enough to yield an ideal formulation and to verify property \eqref{parboth}. In contrast, in Example~\ref{generalexamplecone} this approach was enough to yield an ideal formulation, but not to verify the property. We further discuss this in Section~\ref{necandsufsec} where the boundary subsets highlighted in white in Figure~\ref{gcfig1} will play a similar role to those in Figure~\ref{normalfigb}.

\subsection{Constraints from power systems applications}

A very clever technique to extend the applicability of Theorem~\ref{isotone} was introduced by \cite{bestuzheva2016convex} in the context of power systems. The following example illustrates how this technique relates to the use of Lemma~\ref{commontangent2} and Corollary~\ref{generalhcoro}.
\begin{example}\label{quadrsticsin} Let $C^1:=[-1,1]\times \set{0}$,
$K\bra{l,u}:=\set{x\in [l,u]\times [0,1]\,:\, x_1^2\leq x_2}$ and $C^2:=K\bra{-1,1}$.  We have that $\{C^i\}_{i=1}^2$  does not satisfy the assumptions of Theorem~\ref{isotone}. However, \cite{bestuzheva2016convex} notes that if $\tilde{C}^2=K\bra{-1,0}$ or $\tilde{C}^2=K\bra{0,1}$, then (after a rotation) $\{C^1,\tilde{C}^2\}$ does satisfy the assumptions. Hence,  Theorem~\ref{isotone} can characterize $Q^1:=\conv\bra{\bra{C^1\times \set{\e^1}} \cup \bra{K\bra{-1,0}\times \set{\e^2}}  }$ and $Q^2:=\conv\bra{\bra{C^1\times \set{\e^1}}  \cup \bra{K\bra{0,1}\times \set{\e^2}}  }$.
Then \cite{bestuzheva2016convex} further notes  $\conv\bra{\bra{C^1\times \set{\e^1}} \cup \bra{K\bra{-1,1}\times \set{\e^2}} }=\conv\bra{Q^1\cup Q^2}$ and using the construction of the $Q^i$ from Theorem~\ref{isotone} shows that $Q^1\cup Q^2$  convex   and
\[
 Q^1\cup Q^2= \set{\bra{x,y}\in \Real^4\,:\,\begin{alignedat}{6}
x_1-y_1&\leq \sqrt{x_2 y_2},\quad&  0 \leq x_2&\leq y_2, \quad& y_1+y_2&=1, \\-x_1-y_1&\leq \sqrt{x_2 y_2},\quad& -1\leq x_1&\leq 1,  \quad& y_1,y_2&\geq 0
\end{alignedat}}.
\]
To instead construct a formulation using Corollary~\ref{generalhcoro} let $\set{\mathcal{C}^j}_{j=1}^2$ be such that  \blue{$C^{j,0}=C^2+\{x\in \Real^2\,:\, \bra{-1}^j x_1\leq 0,\quad x_2=0\}$},    $C^{j,1}=\bra{-1}^j \e^1+C^{j,0}_\infty=\{x\in \Real^2\,:\, \bra{-1}^j x_1\leq 1,\quad x_2=0\}$ and $C^{j,2}=C^{j,0}$ for each $j\in \sidx{2}$. Then $C^{j,0}_\infty=\{x\in \Real^2\,:\, \bra{-1}^j x_1\leq 0,\quad x_2=0\}$, $C^{j,1}=T_{C^1}(\bra{-1}^j,0)$ and $C^{j,2}=C^2+C^{j,0}_\infty$ for all $j\in \sidx{2}$. Then $\set{\mathcal{C}^j}_{j=1}^2$ satisfies  \eqref{parboth} for $U=\Real^2$\blue{, so we can use Corollary~\ref{generalhcoro}. To  obtain an explicit  algebraic description of the resultant formulation, first note that for each $j\in \sidx{2}$ we have  $C^{j,0}=\{x\in \Real^2\,:\, g_j(x_1)\leq x_2,\; x_2\leq 1\}$, where
\[g_j(x_1)=((\bra{-1}^j x_1)^+)^2=\begin{cases}x_1^2& \text{if }\bra{-1}^j x_1 \geq 0 \\ 0 & \text{if }\bra{-1}^j x_1 \leq 0\end{cases}.\]
This construction is illustrated in Figure~\ref{fee1} for $j=1$, where $C^{1,0}$ is depicted  in gray, the graph of $x_1^2$ is depicted by the solid black curve and the graph of $g_1(x_1)$ is depicted by the dotted black curve. Figure~\ref{fee1} illustrates the convexity of $g_j(x_1)$, which we can use to conclude that }  for each $j\in \sidx{2}$ we have
$\epi\bra{\gamma_{C^{j,0}}}=\set{\bra{x,y}\in  \Real^{3}\,:\, \begin{alignedat}{3}((\bra{-1}^j x_1)^+)^2&\leq y\cdot x_2, \;x_2&\leq y \end{alignedat}}$. \blue{Finally, the ideal formulation for $x\in C^1\cup C^2$ from Corollary~\ref{generalhcoro}}  is given by
 \begin{equation}\label{notbasicform}
\bra{((-1)^j x_1-y_1)^+}^2\leq y_2\cdot x_2\,  \forall j\in \sidx{2},\,  x_2\leq y_2, \, y_1+y_2=1, \, y\in \set{0,1}^2.
\end{equation}
The continuous relaxation of this formulation is identical to $Q^1\cup Q^2$.\qed
\end{example}
\begin{figure}[htpb]
\centering
\subfigure[\blue{Construction for Example~\ref{quadrsticsin}.}]{\includegraphics[scale=0.29]{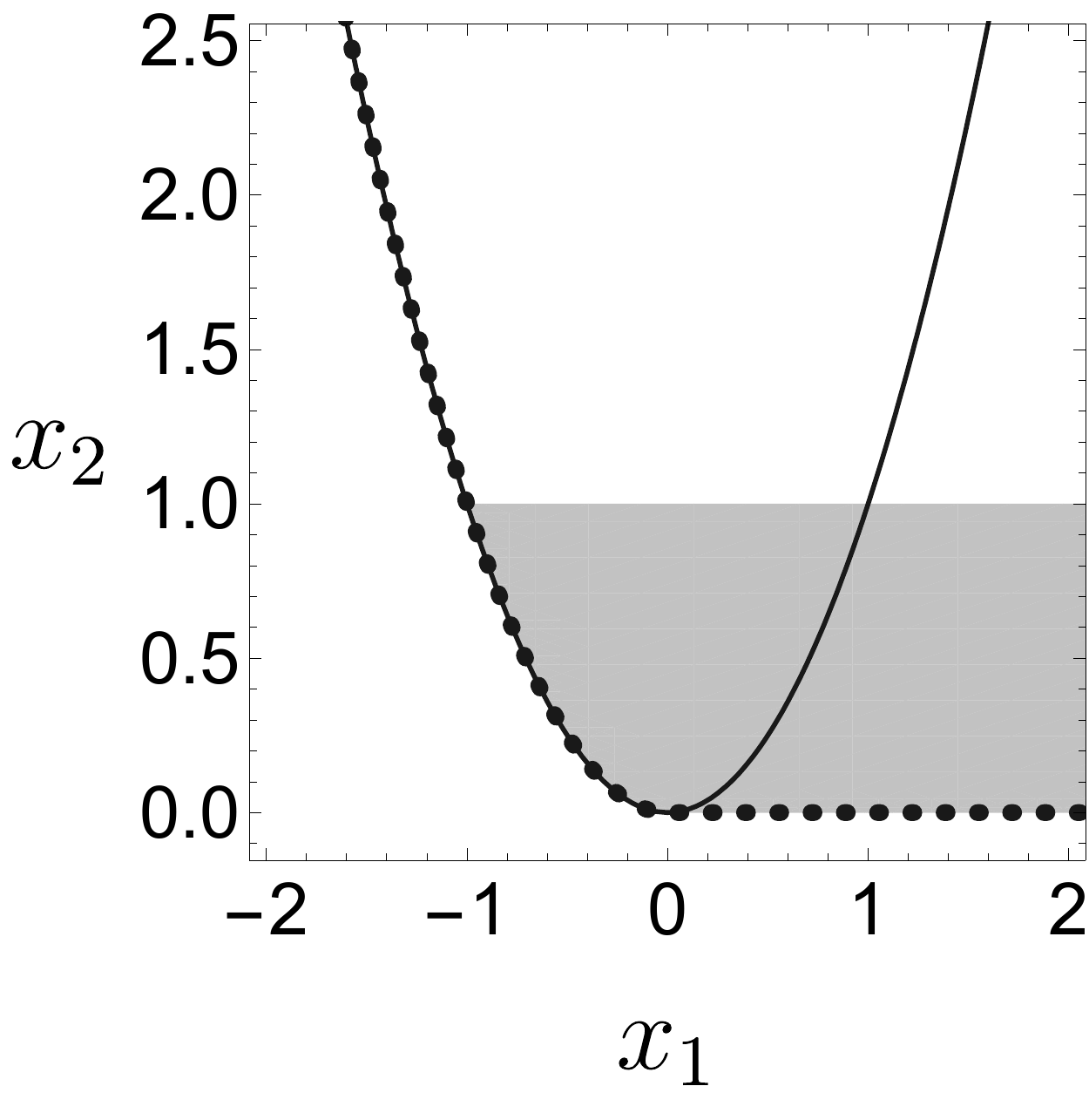}\label{fee1}}\quad
\subfigure[\blue{First attempt for Example~\ref{quadrsticsin2}.}]{\includegraphics[scale=0.29]{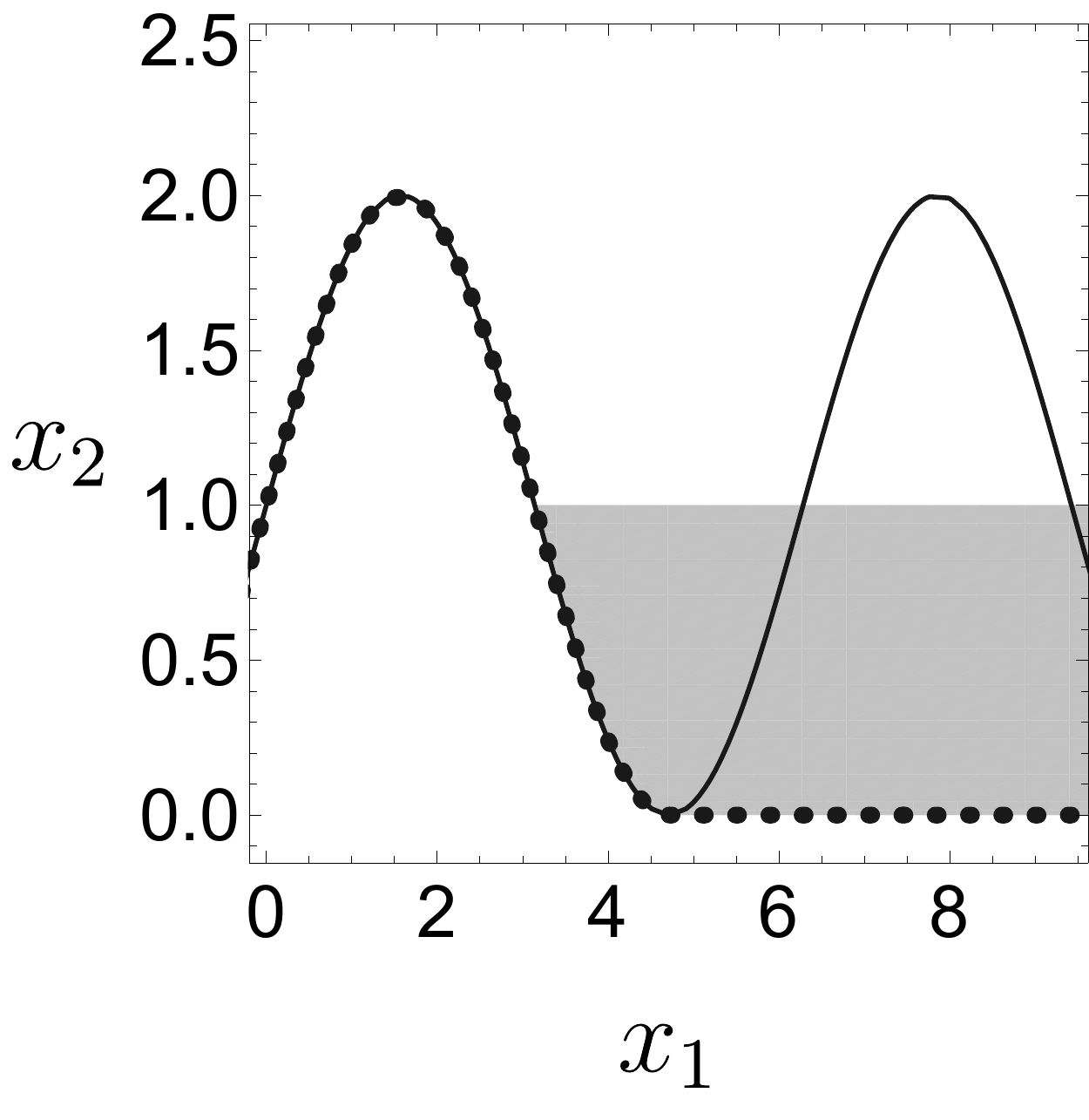}\label{fee2}}\quad
\subfigure[\blue{Final construction for Example~\ref{quadrsticsin2}.}]{\includegraphics[scale=0.29]{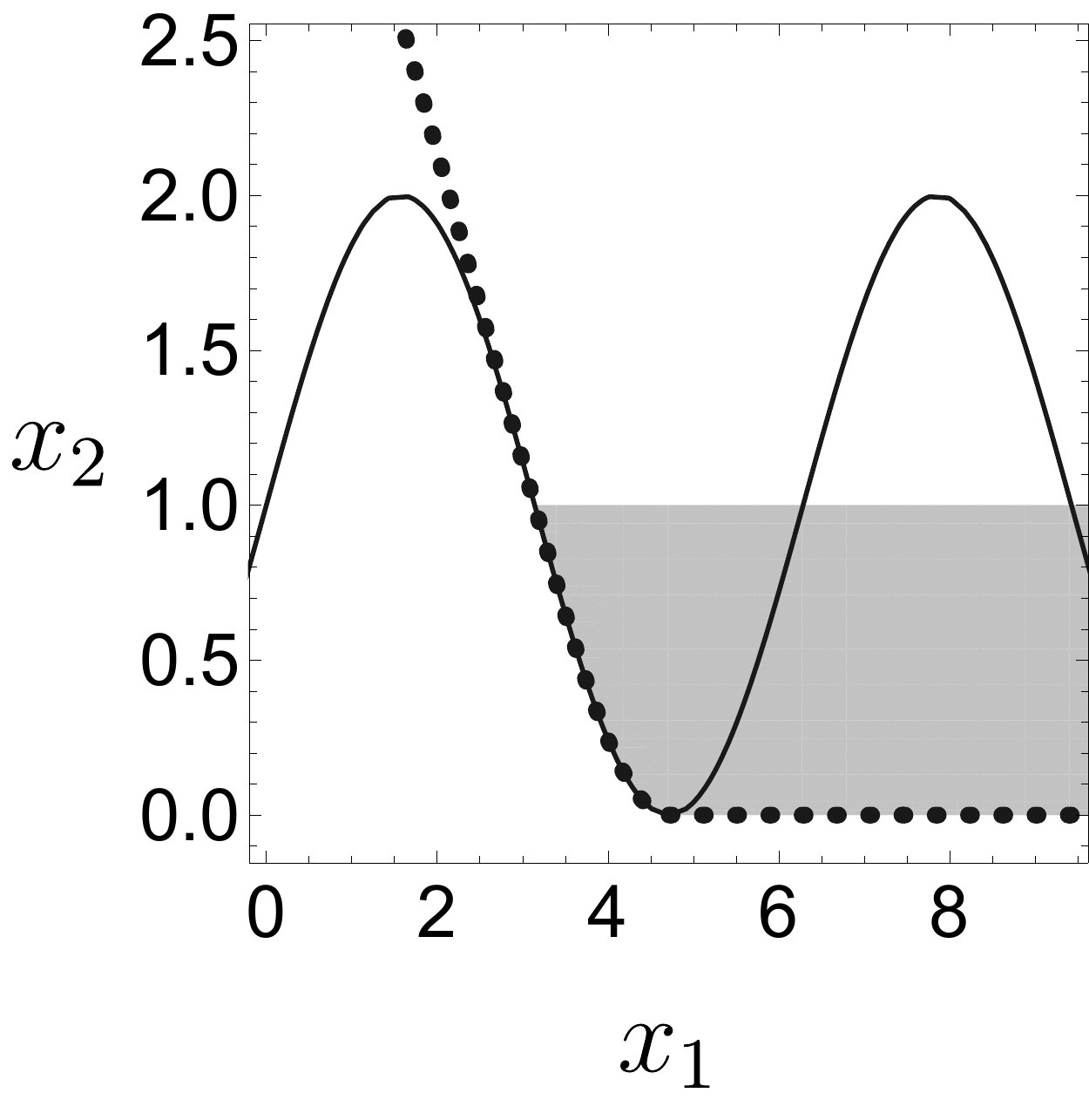}\label{fee3}}
\caption{\blue{Using $\bra{\cdot}^+$ and $\max$ to describe Minkowski sums.}}\label{fee}
\end{figure}
The quadratic set considered in \cite{bestuzheva2016convex} was an approximation of a trigonometric set \cite{bestuzheva2016convex,hijazi2013convex}. The following example shows that the Lemma~\ref{commontangent2} and Corollary~\ref{generalhcoro} can also be applied directly to such sets.

\begin{example}\label{quadrsticsin2}  Let $C^2:=\set{x\in [\pi,2\pi]\times [0,1]\,:\, \sin\bra{x_1}+1\leq x_2}$, $C^1:=[\pi,2\pi]\times \set{0}$, \blue{$\set{\mathcal{C}^j}_{j=1}^2$ be such that  $C^{j,0}=C^2+\{x\in \Real^2\,:\, \bra{-1}^j x_1\leq 0,\quad x_2=0\}$,    $C^{1,1}=\pi\e^1+C^{1,0}_\infty$,  $C^{2,1}=2\pi\e^1+C^{2,0}_\infty$, and $C^{j,2}=C^{j,0}$ for each $j\in \sidx{2}$.  Similarly to Example~\ref{quadrsticsin}, we can check that $\set{\mathcal{C}^j}_{j=1}^2$ satisfies  \eqref{parboth} for $U=\Real^2$ and we can use Corollary~\ref{generalhcoro}. To  obtain an explicit  algebraic description of the resultant formulation  we can again write  $C^{1,0}=\{x\in \Real^2\,:\, g_1(x_1)\leq x_2,\; x_2\leq 1\}$, where now
  \begin{alignat*}{3}
  g_1(x_1)-1=\sin\bra{\max\set{3\pi-x_1,3\pi/2}}&=\sin\bra{3\pi+\max\set{-x_1,-3\pi/2}}\\&=\sin\bra{3\pi-\min\set{x_1,3\pi/2}}\\&=\sin\bra{\min\set{x_1,3\pi/2}}\\&=\begin{cases}\sin\bra{x_1}& \text{if } x_1 \leq 3 \pi/2 \\ \sin\bra{3 \pi/2} & \text{if } x_1 \geq 3 \pi/2\end{cases}.
\end{alignat*}
This construction is illustrated in Figure~\ref{fee2}, where $C^{1,0}$ is depicted  in gray, the graph of $\sin\bra{x_1}+1$ is depicted by the solid black curve and the graph of $g_1(x_1)$ is depicted by the dotted black curve. Figure~\ref{fee2} shows that while $g_1(x_1)$ can be used to describe $C^{1,0}$, $g_1(x_1)$ is not convex. However, we can check that  $h_1\bra{x_1}=\max\set{g_1\bra{x_1}, \pi - x_1 }$ is convex and we can replace $g_1$ by $h_1$ in the algebraic description of $C^{1,0}$. This  is illustrated in Figure~\ref{fee3}, where again $C^{1,0}$ is depicted  in gray, the graph of $\sin\bra{x_1}+1$ is depicted by the solid black curve and now the the dotted black curve depicts the graph of $h_1(x_1)$. Using a similar reasoning for $C^{2,0}$, we can check that for both $j\in \sidx{2}$ we have} $C^{j,0}=\set{x\in \Real^2\,:\, f_i\bra{x}\leq 0,\quad \bra{-1}^j x_1\leq \bra{-1}^j j\cdot \pi ,\quad 0\leq x_2\leq 1}$, \blue{where}
$f_1\bra{x}=\blue{1-x_2+h_1\bra{x_1}}=1-x_2+\max\set{\sin\bra{\max\set{-x_1+3 \pi , 3 \pi/2} }, \pi - x_1 }$
and $f_2\bra{x_1}=1-x_2+\max\set{\sin\bra{\max\set{x_1 , 3 \pi/2} }, x_1 - 2 \pi}$. Finally, \blue{we can check that}
\[
 (\cl \tilde{f_i})(x,y)=\begin{cases}y f_i\bra{x/y}& y>0\\ y-x_2+(\bra{-1}^j x_1)^+ & y=0 \\\infty &y<0\end{cases}\]
and $\epi\bra{\gamma_{C^{j,0}}}=\{\bra{x,y}\in  \Real^{3}\,:\, (\cl \tilde{f_i})(x,y)\leq 0,\quad  \bra{-1}^j x_1\leq  \bra{-1}^j j\cdot \pi \cdot y,$ $0\leq x_2\leq y\}$.
Then  we can then use Corollary~\ref{generalhcoro} to obtain the ideal formulation of $x\in C^1\cup C^2$ given by
 \begin{alignat*}{3}
(\cl \tilde{f_1})\bra{x_1-\pi y_2,x_2,y_1}\leq 0,\quad  (\cl \tilde{f_2})\bra{x_1-2 \pi y_2,x_2,y_1}&\leq 0,\\ \pi\leq x_1\leq 2\pi,\quad 0 \leq x_2\leq y_2, \quad y_1+y_2=1, \quad y&\in \set{0,1}^2.\hspace{0.5in}\qed
\end{alignat*}
\end{example}
\subsection{Generalization of Theorem~\ref{isotone}}\label{isotonegensec}

Lemma~\ref{commontangent2} and Proposition~\ref{newtheoref} can also be used to generalize Theorem~\ref{isotone} by combining them with Proposition~\ref{generalorthprop}.

\begin{theorem}\label{isotonegeneral}
Let $\set{v^j}_{j=1}^n\subseteq \Real^n$ be an orthonormal basis of $\Real^n$ and for each $s\in \set{-1,0,1}^n$ let  $K^s=\cone\bra{\set{s_j v^j}_{j=1}^n}$.  In addition, let $\set{G^i}_{i=1}^k$ be closed convex sets in $\Real^n$ such that ${\bf 0} \in G^i$ for all $i\in \sidx{k}$, $\set{s^i}_{i=1}^k\subseteq \set{-1,1}^n$ and $D^i=G^i\cap K^{s^i}$ for each $i\in \sidx{k}$ be such that
\begin{enumerate}
	\item  $\bra{D^i-K^{s^i}}\cap K^{s^i}=D^i$ and is compact for all $i\in \sidx{k}$,
	\item for all $t\in \set{-1,1}^n$ there exist disjoint sets $\set{J^t_i}_{i=1}^k$ such that $J^t_i\subseteq \sidx{n}$ for all $i\in \sidx{k}$ and
	$D^i+K^t=D^i\cap \spann\bra{\set{v^j}_{j\in J^t_i}} +K^t \quad \forall i\in \sidx{k}$.\footnote{with $\spann\bra{\emptyset}=\set{\bf 0}$.}
\end{enumerate}
Finally, let $\set{b^i}_{i=1}^k$ and for all $i,l\in \sidx{k}$ and $j\in \sidx{n}$ let $C^i:=D^i+b^i$, $v^{i,j}:=s^i_j v^j$, $\overline{b}^{i,l}_j=\max\set{v^{i,j}\cdot x\,:\, x\in C^l}$ for $i\neq l$, $\overline{b}^{i,i}_j=v^{i,j}\cdot b^i$, $L^{i}_j:=\min\set{ v^{j}\cdot x\,:\, x\in C^i}$ and $U^{i}_j:=\max\set{ v^{j}\cdot x\,:\, x\in C^i}$. Then an ideal formulation for $x\in \bigcup_{i=1}^k C^i$ is given by
\begin{subequations}\label{isotonegeneralform}
\begin{alignat}{3}
\gamma_{G^i}\bra{\sum\nolimits_{j=1}^n v^{i,j} \bra{v^{i,j}\cdot x-\sum\nolimits_{l=1}^n \overline{b}^{i,l}_j y_l }^{\hspace{-0.05in}\text{\relsize{3}{$+$}}\hspace{0.05in}}}&\leq y_i&\quad&\forall i\in \sidx{k}\label{isotonegeneralformgau}\\
\sum\nolimits_{i=1}^k L^{i}_j y_i \leq v^j\cdot x\leq \sum\nolimits_{i=1}^k  &U^{i}_j y_i&\quad&\forall j\in \sidx{n}\label{isotonegeneralformlin}\\
\sum\nolimits_{i=1}^k y_i=1, \quad y\in \{0&,1\}^k.
 \end{alignat}
 \end{subequations}
 In particular, if $G^i=\set{x\in H^i\,:\, v^{i,j}\cdot \bra{x+b^i}\leq \overline{b}^{i,i}_j,\quad \forall j\in \sidx{n}}$ for a closed convex set $H^i\subseteq \Real^n$, then  we can replace $\gamma_{G^i}$ by $\gamma_{H^i}$ in \eqref{isotonegeneralformgau}.
 \end{theorem}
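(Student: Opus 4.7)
My plan is to apply Proposition~\ref{newtheoref} to decompose $Q(\mathcal{C})$ as an intersection of $2^n$ simpler sets $Q(\mathcal{C}^t)$ indexed by sign patterns $t\in\{-1,1\}^n$, characterize each $Q(\mathcal{C}^t)$ via Proposition~\ref{generalorthprop}, and then aggregate the resulting inequalities into \eqref{isotonegeneralform}. For each $t$ set $K^t:=\cone(\{t_j v^j\}_{j=1}^n)$ and define $\mathcal{C}^t:=\{C^{t,i}\}_{i=1}^k$ with $C^{t,i}:=C^i+K^t=b^i+D^i+K^t$. Condition~\eqref{par1} of Proposition~\ref{newtheoref} is immediate since $0\in K^t$. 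For condition~\eqref{par2}, Lemma~\ref{commontangent2} gives $\sigma_{C^{t,i}}(u)=\sigma_{C^i}(u)$ whenever $u\in (K^t)^*=K^{-t}$; because the orthants $\{K^{-t}\}_{t\in\{-1,1\}^n}$ cover $\Real^n$, every nonzero $u$ lies in some $K^{-t}$, and there $\sigma_{C^{t,i}}(u)=\sigma_{C^i}(u)$ simultaneously for all $i$. Proposition~\ref{newtheoref} then yields $Q(\mathcal{C})=\bigcap_{t}Q(\mathcal{C}^t)$.

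To characterize each $Q(\mathcal{C}^t)$, invoke condition~2 of the theorem to write $C^{t,i}=b^i+D^i\cap\spann\{v^j\}_{j\in J^t_i}+K^t$ and apply Proposition~\ref{generalorthprop} with $J_i=J^t_i$, $M=K^t$, and sign vector $\tilde s^{t,i}\in\{-1,0,1\}^n$ defined by $\tilde s^{t,i}_j=s^i_j$ for $j\in J^t_i$ and $\tilde s^{t,i}_j=0$ otherwise. The cone $\tilde K^{t,i}:=\cone\{\tilde s^{t,i}_j v^j\}_{j=1}^n$ equals $K^{s^i}\cap\spann\{v^j\}_{j\in J^t_i}$, making $G^i\cap\tilde K^{t,i}=D^i\cap\spann\{v^j\}_{j\in J^t_i}$ match the Minkowski form required by Proposition~\ref{generalorthprop}. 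Condition~1 of the theorem in turn verifies $(G^i\cap\tilde K^{t,i}-\tilde K^{t,i})\cap\tilde K^{t,i}=G^i\cap\tilde K^{t,i}$: any $x=e-k\in\tilde K^{t,i}\subseteq K^{s^i}$ with $e\in D^i\cap\spann\{v^j\}_{j\in J^t_i}$ and $k\in\tilde K^{t,i}\subseteq K^{s^i}$ lies in $(D^i-K^{s^i})\cap K^{s^i}=D^i$ by condition~1 and also in $\spann\{v^j\}_{j\in J^t_i}$. Compactness of $G^i\cap\tilde K^{t,i}$ is inherited from $D^i$.

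The main obstacle will be aggregating the explicit descriptions of the $Q(\mathcal{C}^t)$ into the compact form \eqref{isotonegeneralform}. For each $t$, Proposition~\ref{generalorthprop} produces a gauge inequality with argument $\sum_{j\in J^t_i}u^{t,i,j}(u^{t,i,j}\cdot x-\sum_l\overline{b}^{t,i,l}_j y_l)^+$, where $u^{t,i,j}=(-s^i_j t_j)^+ s^i_j v^j$ equals $v^{i,j}$ when $t_j=-s^i_j$ and vanishes otherwise, together with linear inequalities $t_j v^j\cdot x\geq\sum_l\underline{b}^{t,l}_j y_l$. A case analysis on $t_j\in\{-1,1\}$ and membership $j\in J^t_l$ shows that the linear inequalities across $t$ collectively reduce to the box constraints \eqref{isotonegeneralformlin}, with $\underline{b}^{t,l}_j$ matching $L^l_j$ or $-U^l_j$ when $j\in J^t_l$ realizes the extremum on $D^l$ and reducing to the trivial $t_j v^j\cdot b^l$ otherwise. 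For the gauge inequalities, the $(\cdot)^+$ structure forces the argument into $K^{s^i}$, and combining the contributions from all $t$ (each activating only the indices $j\in J^t_i$ with $t_j=-s^i_j$) reconstructs the single aggregated gauge \eqref{isotonegeneralformgau} running over all $j\in\sidx{n}$, with the orthant cover ensuring every index is represented. Idealness follows from Proposition~\ref{embpart} applied to $Q(\mathcal{C})$, and the ``in particular'' clause follows from Lemma~\ref{gaugelemma}(\ref{propintersect}) after observing that the argument of the gauge automatically satisfies the supplementary constraints distinguishing $G^i$ from $H^i$, so $\gamma_{G^i}$ and $\gamma_{H^i}$ coincide on it.
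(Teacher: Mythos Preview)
Your plan is essentially the paper's proof: decompose $Q(\mathcal{C})=\bigcap_{t\in\{-1,1\}^n}Q(\mathcal{C}^t)$ with $C^{t,i}=C^i+K^t$ via Proposition~\ref{newtheoref} and Lemma~\ref{commontangent2}, describe each $Q(\mathcal{C}^t)$ through Proposition~\ref{generalorthprop} using hypothesis~2, and then aggregate. Your verification of \eqref{genisocond} for the restricted cones $\tilde K^{t,i}$ is more explicit than the paper's.

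The one place your sketch diverges from the paper is the gauge-aggregation step. Your phrase ``the orthant cover ensuring every index is represented'' suggests that because $\bigcup_t \hat J^t_i=\sidx{n}$, the family of partial-sum gauge inequalities is equivalent to the single full-sum inequality \eqref{isotonegeneralformgau}; that inference is not valid for convex constraints in general. The paper instead proves a monotonicity property: if $0\le\mu\le\lambda$ componentwise then $\gamma_{G^i}\bigl(\sum_j\mu_j v^{i,j}\bigr)\le\gamma_{G^i}\bigl(\sum_j\lambda_j v^{i,j}\bigr)$, argued by contradiction from hypothesis~1 via $(D^i-K^{s^i})\cap K^{s^i}=D^i$. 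This shows \eqref{isotonegeneralformgau} dominates every $t$-indexed gauge inequality, and combined with direct validity of \eqref{isotonegeneralformgau} on each $C^i\times\{\e^i\}$ gives the equivalence. You should also track that the coefficients $\overline b^{t,i,l}_j$ coming out of Proposition~\ref{generalorthprop} are computed over $b^l+D^l\cap\spann\{v^j\}_{j\in J^t_l}$, and argue why on the active indices $j\in\hat J^t_i$ they coincide with the $t$-independent $\overline b^{i,l}_j$ of the theorem; the paper passes over this point quickly using the disjointness of the $J^t_i$.
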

\begin{proof}
For each $t\in \set{-1,1}^n$ and $i\in \sidx{k}$ let $C^{s,i}:=C^i+K^t=D^i+K^t+b^i$. We trivially have $C^i\subseteq C^{t,i}$ for all $t\in \set{-1,1}^n$ and $i\in \sidx{k}$. Furthermore, for all $t\in \set{-1,1}^n$  and $u\in - K^t$ we have $\sigma_{C^{s,i}}\bra{u}=\sigma_{C^i}\bra{u}$ for all $i\in \sidx{k}$ because $\bra{K^t}^*=-K^t$ and $C^i$ is compact. Then, because $\Real^n=\bigcup_{t\in \set{-1,1}^n} K^t$ and Proposition~\ref{newtheoref} we have $Q\bra{\mathcal{C}}=\bigcap_{t\in \set{-1,1}^n} Q\bra{\mathcal{C}^{t}}$ for $\mathcal{C}^t:=\set{C^{t,i}}_{i=1}^k$. Noting that  $C^{t,i}:=C^i+K^t=D^i+K^t+b^i=D^i\cap \spann\bra{\set{v^j}_{j\in J^t_i}} +K^t +b^i$ and using $D^i=G^i\cap K^{s^i}$ we can use Proposition~\ref{generalorthprop} to describe $Q\bra{\mathcal{C}^{t}}$. Noting that $u^{i,j}=v^{i,j}$ if $s_j^i=-t_j$ and $u^{i,j}=0$ other wise we have that this description is equal to
\begin{subequations}
\begin{alignat}{3}
\gamma_{G^i}\bra{\sum\nolimits_{j\in \hat{J}_i^t} v^{i,j} \bra{v^{i,j}\cdot x-\sum\nolimits_{l=1}^n \overline{b}^{i,l}_j y_l }^{\hspace{-0.05in}\text{\relsize{2}{$+$}}\hspace{0.05in}}}&\leq y_i&\quad&\forall i\in \sidx{k}\label{isotonegeneralformproofgau}\\
t_j   v^j\cdot x-\sum\nolimits_{l=1}^n \underline{b}^{l}_j y_l &\geq 0 &\quad& \forall  j\in  \sidx{n}\label{isotonegeneralformprooflin}\\
%\sum\nolimits_{i=1}^k \underline{b}_{i,j} y_i \leq v^j\cdot x&\leq \sum\nolimits_{i=1}^k \overline{b}_{i,j} y_i&\quad&\forall j\in \sidx{n}\\
 \sum\nolimits_{i=1}^k y_i=1,\quad y_i&\geq 0 &\quad& \forall i\in \sidx{k},
\end{alignat}
 \end{subequations}
 where  for all $i,l\in \sidx{k}$ and $j\in \sidx{n}$, $\hat{J}_i^t=\set{j\in J_i^t\,:\, s_j^i=-t_j}$ and
\begin{align*}
\underline{b}^{i}_j&=\min\set{t_j   v^j\cdot x\,:\, x\in b^i+D^i\cap \spann\bra{\set{v^j}_{j\in J^t_i}}}\\
&=\min\set{t_j   v^j\cdot x\,:\, x\in b^i+D^i\cap \spann\bra{\set{v^j}_{j\in J^t_i}}+K^t}\\
&=\min\set{t_j   v^j\cdot x\,:\, x\in b^i+D^i+K^t}=\min\set{t_j   v^j\cdot x\,:\, x\in b^i+D^i},
\end{align*}
where the first and last equality follow from $t_j v^j$ being a ray of $K^t$ and the second follows from the theorem's assumptions. Because $C^i=b^i+D^i$ we have that \eqref{isotonegeneralformprooflin} for all $t\in \set{-1,1}^n$ is equivalent to \eqref{isotonegeneralformlin}. To show that \eqref{isotonegeneralformproofgau} for all $t\in \set{-1,1}^n$ is equivalent to \eqref{isotonegeneralformgau} it suffices to note that if $\mu\in \Real^n_+$ and $\lambda\in \Real^n_+$ are such that $\mu_j\leq \lambda_j$ for all $j\in \sidx{n}$ then $\gamma_{G^i}\bra{\sum_{j=1}^n \mu_j v^{i,j}}\leq \gamma_{G^i}\bra{\sum_{j=1}^n \lambda_j v^{i,j}}$. For that assume for a contradiction that the reverse inequality holds for some $\mu$ and $\lambda$. Then we can scale $\mu$ and $\lambda$ so that $\sum_{j=1}^n \mu_j v^{i,j}\notin G^i$ and $\sum_{j=1}^n \lambda_j v^{i,j}\in G^i$. However, $\sum_{j=1}^n \lambda_j v^{i,j},\;\sum_{j=1}^n \mu_j v^{i,j}\in K^{s^i}$ so $\sum_{j=1}^n \mu_j v^{i,j}=\sum_{j=1}^n \lambda_j v^{i,j}-\sum_{j=1}^n (\lambda_j-\mu_j) v^{i,j}\in \bra{\bra{D^i-K^{s^i}}\cap K^{s^i}}$, which contradicts the the theorem's assumptions. The final statement by noting that  $\epi\bra{\gamma_{G^i}}=\set{\bra{x,y}\in \epi\bra{\gamma_{H^i}}\,:\,v^{i,j}\cdot \bra{x+b^i y}\leq \overline{b}^{i,i}_j y,\quad \forall j\in \sidx{n}}$.\qed
\end{proof}

Theorem~\ref{isotonegeneral} generalizes Theorem~\ref{isotone} in two ways. First by allowing unions of more than two sets. Second by relaxing the monotonicity requirement on the sets from a condition of the form $G^i_\infty= \Real^n_-$ to one of the form $\bra{G^i\cap \Real^n_+ -\Real^n_+}\cap\Real^n_+=G^i\cap \Real^n_+$. An example of a set that satisfies the latter condition, but not the former is the Euclidean ball. Theorem~\ref{isotonegeneral} achieves this by using a representation of the Minkowski sum based on the operation $\bra{\cdot}^+$ (cf. Lemma~\ref{finalfinalgaugelemma}), which can have some practical implications that we explore next.

\subsection{Minkowski sum, formulation size and constraint representation}

\blue{As noted in \cite{Hijazi,Hijazioptonline14}, ensuring formulation \eqref{isotoneform} from Theorem~\ref{isotonegeneral} is ideal may require adding all exponentially many (in $n$)   inequalities \eqref{isotoneformexp} for each $i\in \sidx{2}$.
 However,} formulation \eqref{isotonegeneralform} from Theorem~\ref{isotonegeneral} only requires one nonlinear inequality \eqref{isotonegeneralformgau} for each $i\in \sidx{k}$ to be ideal. We now study this seeming paradox starting with an example that shows how and when an exponential number of  inequalities \eqref{isotoneformexp} are needed.

\begin{example}\label{Example22} Let  $G^1:=\set{x\in \Real^n\,:\, \prod_{j=1}^n (2 - x_j) \geq 1, \; x_j\leq 2\; \forall j\in \sidx{n}}$, $G^2=-2\cdot{\bf 1}+\Real^n_+$, $r=2-2^{-1/n}<2-2^{1/(1-n)}$, $C^1=G^1\cap [0,r]^n$ and  $C^2=G^2\cap [-2,0]^n$. By Theorem~\ref{isotone} an ideal formulation for
$x\in C^1\cup C^2$ is given by
\begin{subequations}\label{oldisotoneexfor}
\begin{alignat}{3}
\gamma_{G^1}\bra{\brac{x}_{ J}}&\leq y_1&\quad&\forall J\subseteq \sidx{n}\label{oldisotoneexforexp}\\
-2 y_2  \leq x_j \leq r y_1,\quad \forall j\in \sidx{n},\quad
y_1+y_2=1,\quad y&\in \set{0,1}^2,\label{oldisotoneexforexpother}
 \end{alignat}
 \end{subequations}
 where we omitted $\gamma_{G^2}\bra{\brac{x}_{ J}}\leq y_2$ as they are redundant because $\epi\bra{\gamma_{G^2}}=\set{\bra{x,y}\,:\, x_i\geq -2 y \; \forall i\in \sidx{n}}$.
Alternatively, if for any $a \in \Real^n$ we let $\brac{a}^+\in \Real^n$ be such that ${\brac{a}^+}_j=\bra{a_j}^+$, then
by Theorem~\ref{isotonegeneral} an ideal formulation is given by
\begin{subequations}\label{newisotoneexfor}
\begin{alignat}{3}
\gamma_{G^1}\bra{\brac{x}^+ }&\leq y_1 &\quad&\label{newisotoneexforexp}\\
-2 y_2  \leq x_j \leq r y_1,\quad \forall j\in \sidx{n},\quad
y_1+y_2=1,\quad y&\in \set{0,1}^2,\label{newisotoneexforexpother}
 \end{alignat}
 \end{subequations}
 where we again removed a redundant inequality associated to $\gamma_{G^2}$. Finally,
 \begin{equation}\label{gaugerepofgeo}
 \hspace{-0.12in}\epi\bra{\gamma_{G^1}}=\set{\bra{x,y}\in \Real^{n}\times \Real_+: \begin{alignedat}{3}\prod_{j=1}^n (2y - x_j) \geq y^n,  x_j\leq 2y \, \forall j\in \sidx{n}\end{alignedat}}.
 \end{equation}
Now, by the selection of $r$ we have that for any $J\subseteq \sidx{n}$ such that $\abs{J}\leq n-1$, having $-2 y_2  \leq x_j \leq r y_1$ for all $j\in \sidx{n}$ implies
\[2^{n-\abs{J}} y^{n-\abs{J}} \prod\nolimits_{j\in J} \bra{2 y - x_j}> y^n 2^{\frac{n}{1-n}(\abs{J}-(n-1))}>y^n.\]
Hence replacing \eqref{oldisotoneexforexp} or \eqref{newisotoneexforexp} by $\gamma_{G^1}\bra{x}\leq y_1$
also yields an ideal formulation.

In contrast, if we instead let $r=2$, the  replacement of \eqref{oldisotoneexforexp} or \eqref{newisotoneexforexp}  results in a valid, but not ideal formulation.
Indeed, for any $J\subseteq \sidx{n}$ such that $\abs{J}\leq n-1$, let $\bra{x,y}\in \Real^{n+2}$ given by $y_1=y_2=1/2$, $x_j=1-2^{-n/\abs{J}}(3/2)^{\bra{\abs{J}-n}/J}$ for $j\in J$ and $x_j=-1/2$ for $j\notin J$. Then $\bra{x,y}$ is feasible for the continuous relaxation of \eqref{oldisotoneexforexpother}/\eqref{newisotoneexforexpother} and $\gamma_{G^1}\bra{x}\leq y_1$, but violates $\gamma_{G^1}\bra{\brac{x}_{ J}}\leq y_1$ and $\gamma_{G^1}(\brac{x}^+ )\leq y_1$. Hence, in this case formulation \eqref{oldisotoneexfor} from Theorem~\ref{isotone} requires an exponential number of inequalities, while formulation \eqref{newisotoneexfor} from Theorem~\ref{isotonegeneral} only requires a linear number of inequalities. However, the non-polyhedral nature of the inequalities makes such accounting a subtle matter. For instance, \eqref{oldisotoneexforexp} is equivalent to the single inequality $\max_{J\subseteq \sidx{n}}\gamma_{G^1}\bra{\brac{x}_{ J}}\leq y_1$ and in fact $\max_{J\subseteq \sidx{n}}\gamma_{G^1}\bra{\brac{x}_{ J}}=\gamma_{G^1}(\brac{x}^+)$. Then \eqref{oldisotoneexforexp} or \eqref{newisotoneexforexp} are different representations of the same convex constraint. Further insight into this can be gained by noting that  \eqref{newisotoneexforexp} (i.e. $\gamma_{G^1}(\brac{x}^+ )\leq y_1$) is equivalent to
\begin{equation}\label{extendedform}
\gamma_{G^1}\bra{z}\leq y_1, \quad x_j\leq z_j,\quad 0\leq z_j \quad \forall j\in \sidx{n}.
\end{equation}
Hence, \eqref{newisotoneexforexp} can be thought of as the implicit description of linear sized extended formulation \eqref{extendedform} of the exponential number of inequalities \eqref{oldisotoneexforexp}.
\qed
\end{example}
A detailed study of the size evaluation challenges illustrated in Example~\ref{Example22} is beyond the scope of this paper, but we make two observations about it.

The first concerns explicit formulation representations  that can be fed to a MIP solver. Formulation \eqref{newisotoneexforexp} can be explicitly represented using operation $(\cdot)^+$ or through extended formulation \eqref{extendedform}. The former could cause numerical issues due to the non-differentiability of $(\cdot)^+$, while the auxiliary variables $z$ of the latter could have a similar detrimental effect as variable copies $x^i$ of formulation \eqref{extendedformulation} from Theorem~\ref{extendedTheorem}. In addition, we can use representation \eqref{gaugerepofgeo} of $\gamma_{G^1}$ or use standard second order cone (SOC) representations of the geometric mean that use additional auxiliary variables (e.g. \cite{ben2001lectures}). In contrast to variable copies $x^i$, the auxiliary variables of such SOC representations have been shown to have a significant positive performance effect \cite{DBLP:conf/ipco/LubinYBV16,lubin2016polyhedral}. Hence, these implementation alternatives must be carefully compared to ensure the potential performance gain of the significantly smaller formulation \eqref{newisotoneexforexp} over \eqref{oldisotoneexforexpother} (or even formulations based on Theorem~\ref{extendedTheorem}) is achieved in practice. Similarly, the computational advantage of formulating trigonometric sets directly (as in  Example~\ref{quadrsticsin2}) instead of a quadratic approximation (as in Example~\ref{quadrsticsin2}) is uncertain because of the high quality of the approximation from \cite{bestuzheva2016convex,hijazi2013convex}.

The second observation concerns the existence of linear-sized formulations that do not use operation $(\cdot)^+$ or additional continuous auxiliary variables $z$. As noted in Example~\ref{Example22} this question is meaningless unless we give precise restrictions on the class of nonlinear inequalities we allow. Restricting to polynomial inequalities is not enough to achieve this goal, but the following example shows that it can still lead to interesting results and insights.

\begin{example}\label{zariex}The sets considered in Examples~\ref{example1}--\ref{example2b}, in Example~\ref{generalexamplecone} and in Example~\ref{quadrsticsin} can be described by a finite number of polynomial inequalities. Such sets are usually denoted \emph{basic semi-algebraic} and unions of such sets are usually denoted \emph{semi-algebraic} sets. It is known that the convex hull of the union of basic semi-algebraic sets is semi-algebraic, but not necessarily basic semi-algebraic. Hence, if $\mathcal{C}:=\set{C^i}_{i=1}^k$ is a finite family of basic semi-algebraic sets, then $Q\bra{\mathcal{C}}$ may or may not be basic semi-algebraic as it is the convex hull of particularly structured sets. The continuous relaxations of \eqref{idealofex1} and \eqref{generalconeex} show that $Q\bra{\mathcal{C}}$ is basic semi-algebraic for the sets in Examples~\ref{example1}--\ref{example2b} and in Example~\ref{generalexamplecone}. However, we now show that it is not basic semi-algebraic for the sets in Example~\ref{quadrsticsin}. For that take the affine section of the continuous relaxation of \eqref{notbasicform} obtained by fixing $y_1=y_2=1/2$ and which is given by
 \[
M:=\set{x\in \Real^2\,:\, \bra{\bra{(-1)^j x_1-1/2}^+}^2\leq x_2/2\quad  \forall j\in \sidx{2},\quad  x_2\leq 1/2}.
\]
This set is depicted in Figure~\ref{zariski} in gray where we can confirm that it is semi-algebraic (it is the convex hull of portions of two parabolas). However, we can check that the Zariski closure of its boundary (smallest algebraic variety that contains this boundary) is given  by \[Z:=\set{x\in \Real^2\,:\,  \biggl(\bra{x_1-\frac{1}{2}}^2-\frac{x_2}{2} \biggr)\biggl(\bra{x_1+\frac{1}{2}}^2-\frac{ x_2}{2}\biggr)\bra{x_2-\frac{1}{2}}x_2=0}\]
and depicted in black in Figure~\ref{zariski}. We can also check that $Z\cap \Int\bra{M}\neq \emptyset$, which is a known impediment for a set to be  basic semi-algebraic \cite{andradas1994ubiquity,blekherman2013semidefinite}. \qed
\end{example}
\begin{figure}[htb]
\centering \includegraphics[scale=.45]{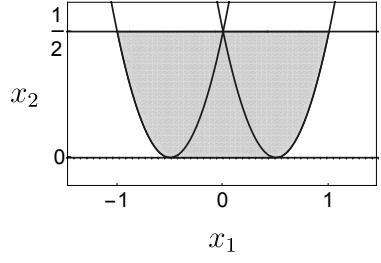}
\caption{Set $M$ from Example~\ref{zariex} and its Zariski closure.}\label{zariski}
\end{figure}

Note that for the sets in Examples~\ref{example1}--\ref{example2b} and  in Example~\ref{generalexamplecone} the description of the Minkoswki sum from Lemma~\ref{commontangent2} does  not require the operation $(\cdot)^+$ and $Q\bra{\mathcal{C}}$ is basic semi-algebraic.  In contrast, the operation is required for Example~\ref{quadrsticsin} and $Q\bra{\mathcal{C}}$ is not basic semi-algebraic. This shows that operation $(\cdot)^+$ can affect the properties of $Q\bra{\mathcal{C}}$ and that this is strongly tied to the Minkoswki sum operation. In fact, using Proposition~\ref{embpart2} below, Example~\ref{zariex} yields $C^1:=[-1,1]\times\set{0}$ and $C^2:=\set{x\in [-1,1]\times [0,1]\,:\, x_1^2\leq x_2}$ as examples of  basic semi-algebraic sets whose Mikowski sum is not basic semi-algebraic.

\section{Necessary and Sufficient Conditions for Piecewise Formulations}\label{necandsufsec}

Example~\ref{generalexamplecone} shows how condition \eqref{par2} of Proposition~\ref{newtheoref} may not be necessary to obtain an ideal formulation. We now give necessary and sufficient strength conditions  through a variant of \eqref{par1} that guarantees formulation validity.

\begin{definition}\label{generalformnonid}Let $\mathcal{C}:=\set{C^i}_{i=1}^k\in \mathbb{C}_n$ and $\mathcal{C}^j:=\set{C^{j,i}}_{i=1}^k\in \mathbb{C}_n$ for $j\in \sidx{m}$ be such that $C^i= \bigcap_{j=1}^m C^{j,i}$ for all $i\in \sidx{k}$ so that a valid formulation of $x\in \bigcup_{i=1}^k C^i$ is given by
\begin{equation}\label{sharpidealform}
\bra{x,y}\in Q\bra{\mathcal{C}^{j}}\quad \forall j\in \sidx{m},\quad \sum\nolimits_{i=1}^k y_i,\quad y\in \set{0,1}^k.
\end{equation}
We say \eqref{sharpidealform} is ideal if its continuous relaxation is equal to $Q\bra{\mathcal{C}}$ and \emph{sharp} if the projection of this relaxation onto the $x$ variables is equal to $\conv\bra{\bigcup_{i=1}^k C^i}$.
\end{definition}

Being sharp is a weaker strength requirement than being ideal (e.g. by Proposition~\ref{embpart2} below, if  \eqref{sharpidealform} is ideal, then it is sharp), but can still result in good computational performance (e.g. \cite[Section 2.2]{Mixed-Integer-Linear-Programming-Formulation-Techniques}). In fact, the polyhedral work of Balas, Blair and Jeroslow \cite{balas88,blair90,jeroslow88} considered in Section~\ref{polysec}  focused on constructing sharp formulations and resulted in necessary conditions that can be stated in the context of Definition~\ref{generalformnonid} as follows.

\begin{theorem}[Theorem 3 in \cite{blair90}]\label{necblairteo} Let $A\in \mathbb{R}^{m\times n}$ and for each $i\in \sidx{k}$ let $b^i\in \Real^m$ and $P^i=\set{x\in \Real^m\,:\,Ax\leq b^i}$, for each $B\in \mathcal{B}$ let $\mathcal{C}^B:=\set{C^{B,i}}_{i=1}^k$ be such that $C^{B,i}=P\bra{B,b^i}$ for all $i\in \sidx{k}$ and $\set{\mathcal{C}^j}_{j=1}^m=\set{\mathcal{C}^B}_{B\in \mathcal{B}}$. If \eqref{sharpidealform} is not sharp then \blue{there exists $B\in \mathcal{B}$, $i_1,i_2\in \sidx{k}$ and $u\in \Real^n$ such that
\begin{equation*}
\sigma_{\conv\bra{\bigcup_{i=1}^k C^i}}\bra{u}=\sigma_{C^{B,i_1}}\bra{u}\quad\text{and}\quad
																								\sigma_{\conv\bra{\bigcup_{i=1}^k C^i}}\bra{u}<\sigma_{C^{B,i_2}}\bra{u}.
\end{equation*}}
\end{theorem}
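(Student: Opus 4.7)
The plan is to produce the witness pair $(B,i_1,i_2,u)$ by combining a convex separation argument with a linear programming duality argument tailored to the common constraint matrix $A$. First, I would use that the formulation fails to be sharp to pick a point $\bar{x} \notin \conv(\bigcup_{i=1}^k P^i)$ whose preimage $(\bar{x},\bar{y})$ lies in the continuous relaxation, hence in every $Q(\mathcal{C}^B)$. A closed convex separation of $\bar{x}$ from $\conv(\bigcup_{i=1}^k P^i)$ then produces a direction $u \in \Real^n$ and a scalar $\alpha$ with $u\cdot\bar{x}>\alpha\geq u\cdot x$ for every $x$ in the union, so in particular
\[
\max_{i}\sigma_{P^i}(u)\;=\;\sigma_{\conv(\bigcup_{i=1}^k P^i)}(u)\;\leq\;\alpha\;<\;u\cdot\bar{x},
\]
and $\sigma_{\conv(\bigcup P^i)}(u)$ is finite.

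Next, I would single out $i_1\in\arg\max_i \sigma_{P^i}(u)$ and use LP duality on the bounded LP $\max\{u\cdot x:Ax\leq b^{i_1}\}$ to obtain a dual optimal $\mu^*\geq 0$ with $A^{T}\mu^*=u$ and $\mu^{*T}b^{i_1}=\sigma_{P^{i_1}}(u)$. I would then choose $\mu^*$ to be a basic feasible solution of the dual so that its support consists of linearly independent rows of $A$, and extend this support (padding $\mu^*$ with zeros) to a basis $B\in\mathcal{B}$. This produces the nontrivial decomposition $u=A_B^{T}\mu^*_B$ with $\mu^*_B\geq 0$.

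With this $B$ and $i_1$, the support function of the relaxed polyhedron $C^{B,i_1}=\{x:A_Bx\leq b^{i_1}_B\}$ is directly computable: for any $x\in C^{B,i_1}$, $u\cdot x=\mu^{*T}_B A_Bx\leq \mu^{*T}_B b^{i_1}_B=\sigma_{P^{i_1}}(u)$, with equality attained at $\bar{x}(B,b^{i_1})$. Thus $\sigma_{C^{B,i_1}}(u)=\sigma_{P^{i_1}}(u)=\sigma_{\conv(\bigcup P^i)}(u)$, delivering the first half of the conclusion. For the second half, $(\bar{x},\bar{y})\in Q(\mathcal{C}^B)=\conv(\bigcup_{i=1}^k C^{B,i}\times\{\e^i\})$ together with Theorem 3.3.2 of \cite{hiriart-lemarechal-2001} (as used in the proof of Proposition~\ref{newtheoref}) gives
\[
u\cdot\bar{x}\;\leq\;\sigma_{\mathrm{proj}_x Q(\mathcal{C}^B)}(u)\;=\;\max_i\sigma_{C^{B,i}}(u),
\]
so combining with $u\cdot\bar{x}>\sigma_{\conv(\bigcup P^i)}(u)$ produces an $i_2$ with $\sigma_{C^{B,i_2}}(u)>\sigma_{\conv(\bigcup P^i)}(u)$, as required (and automatically $i_2\neq i_1$).

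The main obstacle I anticipate is the bookkeeping around the basic-feasible-solution step when $\rank(A)<n$: the dual LP has equality constraints $A^{T}\mu=u$, and a BFS has at most $\rank(A)$ positive components with linearly independent active rows, but may have strictly fewer, so one must pad the support to a full element of $\mathcal{B}$ while preserving dual feasibility and optimality. A secondary care point is ensuring nonemptiness of $P^{i_1}$ and of the attaining set of the LP; both hold because $\sigma_{P^{i_1}}(u)$ is finite and nontrivial, so the primal LP is feasible and bounded, making standard LP duality applicable.
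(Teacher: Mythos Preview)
The paper does not supply its own proof of this statement: it is quoted verbatim as Theorem~3 of \cite{blair90} and used only to motivate the more general Theorem~\ref{newtheorefnew}. Your argument via convex separation followed by LP duality to extract a basis $B\in\mathcal{B}$ is correct and is the natural route (and essentially Blair's). Two small points worth tightening: (i) the separation step needs $\conv\bigl(\bigcup_i P^i\bigr)$ closed, which holds here because the $P^i$ share a common recession cone (they are in $\mathbb{C}_n$), and (ii) once you have $u=A_B^{T}\mu_B^*$ with $\mu_B^*\geq 0$, the finiteness of $\sigma_{C^{B,i_1}}(u)$ is automatic from the inequality $u\cdot x=\mu_B^{*T}A_Bx\leq\mu_B^{*T}b_B^{i_1}$, so you do not even need to invoke the explicit point $\bar{x}(B,b^{i_1})$ to bound it from above. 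Your flagged bookkeeping about padding a dual BFS support to a full basis in $\mathcal{B}$ is exactly the right concern and is handled as you describe.
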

To extend  Theorem~\ref{necblairteo} we use the following generalization to  non-polyhedral sets of a known relation between the Cayley embedding and the Minkowski sum of polytopes \cite{caytrick,karavelas2013maximum,WeibelPhd}. We present a proof in Section~\ref{proofofbdprop}.
\begin{restatable}{proposition}{embpartproptwo}\label{embpart2}
Let $\Delta^k:=\{\lambda\in \Real^k_+\,:\, \sum_{i=1}^k \lambda_i =1\}$, $\mathcal{C}:=\set{C^i}_{i=1}^k\in \mathbb{C}_n$, $Q\subseteq \Real^{n+k}$ be a closed convex set such that $Q\subseteq \Real^n\times \Delta^k$ and
\beq\label{begingformulation}\bra{x,y}\in Q \cap \bra{\Real^n\times\mathbb{Z}^k} \quad \Leftrightarrow \quad\exists i\in \set{1,\ldots,k} \text{ s.t. } y=\e^i\;\wedge\; x\in C^i,\eeq
and $Q\bra{\bar{y}}:=\set{x\in \Real^n: \bra{x,\bar{y}}\in Q}$. Then $Q\bra{\mathcal{C}}\subseteq Q$, $C_\infty=Q\bra{\mathcal{C}}_\infty$,
\beq\label{minkwoskioneway}
\forall \bar{y}\in \Delta^k\quad \quad \bar{x}\in \sum\nolimits_{i=1}^k \bar{y}_i C^i \quad\Rightarrow \quad\bra{\bar{x},\bar{y}}\in Q.
\eeq
and the following are equivalent
\begin{enumerate}
\item\label{minkowski1} $Q=Q\bra{\mathcal{C}}$.
\item\label{minkowski2} $\forall \bar{y}\in \Delta^k\quad \quad \bra{\bar{x},\bar{y}}\in Q  \quad\Rightarrow \quad \bar{x}\in \sum_{i=1}^k \bar{y}_i C^i$.
\item\label{minkowski4} $\bra{\bar{x},\frac{1}{k}\mathbf{1}}\in Q  \quad\Rightarrow \quad \bar{x}\in \frac{1}{k}\sum_{i=1}^k  C^i$.
 \setcounter{nameOfYourChoice}{\theenumi}
 \end{enumerate}
\end{restatable}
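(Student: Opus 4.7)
The plan is to verify the four assertions in order. Part 1 is immediate: the formulation condition~\eqref{begingformulation} forces $(x,e^i)\in Q$ for every $x\in C^i$, so by convexity of $Q$ we have $\conv\bigl(\bigcup_i C^i\times\{e^i\}\bigr)\subseteq Q$. For Part~2 (reading $C_\infty$ as $Q_\infty$), the inclusion $Q(\mathcal{C})_\infty\subseteq Q_\infty$ follows from Part~1; conversely, any $(d_x,d_y)\in Q_\infty$ must satisfy $d_y=0$ since $Q\subseteq\Real^n\times\Delta^k$ and $\Delta^k$ is compact, and then fixing any $x^1\in C^1$, the ray $\{(x^1+td_x,e^1):t\ge 0\}$ lies in $Q$, and each of these points is integer, so~\eqref{begingformulation} forces $x^1+td_x\in C^1$ for all $t\ge 0$, hence $d_x\in C^1_\infty$ and $(d_x,0)\in Q(\mathcal{C})_\infty$ by Proposition~\ref{embpart}(\ref{embpart1}). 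Part~3 then follows because every $\bar{x}\in\sum_i\bar{y}_iC^i$ can be written $\bar{x}=\sum_i\bar{y}_ix^i$ with $x^i\in C^i$, so $(\bar{x},\bar{y})=\sum_i\bar{y}_i(x^i,e^i)$ is a convex combination of points already known to lie in $Q$.

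For the equivalences, $(\ref{minkowski1})\Rightarrow(\ref{minkowski2})$ uses Carath\'eodory to write any $(\bar{x},\bar{y})\in Q=Q(\mathcal{C})$ as a finite convex combination $\sum_j\lambda_j(x_j,e^{i_j})$ with $x_j\in C^{i_j}$; grouping by $i$ yields $\bar{y}_i=\sum_{j:\,i_j=i}\lambda_j$ and $\bar{x}=\sum_i\bar{y}_iz^i$ with $z^i\in C^i$ the corresponding internal convex combination (for indices with $\bar{y}_i>0$), giving $\bar{x}\in\sum_i\bar{y}_iC^i$. The implication $(\ref{minkowski2})\Rightarrow(\ref{minkowski4})$ is immediate specialization to $\bar{y}=\tfrac{1}{k}\mathbf{1}$.

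The substantive step is $(\ref{minkowski4})\Rightarrow(\ref{minkowski1})$, for which the plan is a support-function argument. For each $u\in\Real^n$ set
\[
g_u(\bar{y})\;:=\;\sigma_{Q(\bar{y})}(u)\;-\;\sum\nolimits_{i=1}^k\bar{y}_i\,\sigma_{C^i}(u),\qquad \bar{y}\in\Delta^k.
\]
Convexity of $Q$ makes $\bar{y}\mapsto\sigma_{Q(\bar{y})}(u)$ concave (for $x^1\in Q(\bar{y}^1),x^2\in Q(\bar{y}^2)$ one has $\mu x^1+(1-\mu)x^2\in Q(\mu\bar{y}^1+(1-\mu)\bar{y}^2)$), so $g_u$ is concave; Part~3 gives $g_u\ge 0$; the formulation condition gives $g_u(e^i)=0$; and hypothesis~(\ref{minkowski4}) gives $g_u(\tfrac{1}{k}\mathbf{1})=0$. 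The key observation is that for every $\bar{y}\in\Delta^k\setminus\{\tfrac{1}{k}\mathbf{1}\}$ we have $\max_i\bar{y}_i>\tfrac{1}{k}$, so $\lambda^*:=1/(k\max_i\bar{y}_i)\in(0,1)$ and $\tilde{y}:=(\tfrac{1}{k}\mathbf{1}-\lambda^*\bar{y})/(1-\lambda^*)$ lies in $\Delta^k$ and satisfies $\tfrac{1}{k}\mathbf{1}=\lambda^*\bar{y}+(1-\lambda^*)\tilde{y}$; concavity then yields
\[
0\;=\;g_u\!\left(\tfrac{1}{k}\mathbf{1}\right)\;\ge\;\lambda^*\,g_u(\bar{y})+(1-\lambda^*)\,g_u(\tilde{y})\;\ge\;0,
\]
and both nonnegative summands must vanish, forcing $g_u(\bar{y})=0$. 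Hence $\sigma_{Q(\bar{y})}=\sigma_{Q(\mathcal{C})(\bar{y})}$ on $\Real^n$, so the two closed convex slices coincide, and since $Q,Q(\mathcal{C})\subseteq\Real^n\times\Delta^k$, slicewise equality forces $Q=Q(\mathcal{C})$.

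The main obstacle I anticipate is handling directions $u$ where $\sigma_{C^i}(u)=+\infty$ for some $i$, for which $g_u$ is only formally defined. This is benign because Part~2 implies $(Q(\bar{y}))_\infty=(Q(\mathcal{C})(\bar{y}))_\infty=C^1_\infty$, so the two support functions share the same effective domain (the polar of $C^1_\infty$) and the concavity argument may be restricted there; outside that domain both support functions equal $+\infty$ and agree trivially. A minor secondary issue is the closedness of the slice $Q(\mathcal{C})(\bar{y})=\sum_i\bar{y}_iC^i$ at boundary $\bar{y}$, which is guaranteed because $Q(\mathcal{C})$ itself is closed by Proposition~\ref{embpart}(\ref{embpart1}), so every slice of it is closed.
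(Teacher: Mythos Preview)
Your proof is correct and, for the crucial implication $(\ref{minkowski4})\Rightarrow(\ref{minkowski1})$, takes a genuinely different route from the paper. The paper argues by contrapositive and explicit separation: given $(\bar{x},\bar{y})\in Q\setminus Q(\mathcal{C})$, it finds a separating hyperplane, adjusts its $y$-coefficients so that the hyperplane nearly supports each fiber $C^i\times\{\e^i\}$, and then forms a convex combination of $(\bar{x},\bar{y})$ with near-maximizers $(\bar{x}^i(\varepsilon),\e^i)$ to land at the barycenter $\tfrac{1}{k}\mathbf{1}$ while preserving strict violation. Your argument instead packages the same geometry into the single observation that a nonnegative concave function on $\Delta^k$ vanishing at the relative-interior point $\tfrac{1}{k}\mathbf{1}$ must vanish everywhere. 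Both approaches exploit that the barycenter ``sees'' every point of the simplex from inside; yours does so through one concavity inequality and avoids the $\varepsilon$-approximation the paper needs to handle unattained suprema, at the cost of having to check (as you do) that the recession cones of all slices coincide so that the support functions share a common effective domain. The preliminary parts and the implications $(\ref{minkowski1})\Rightarrow(\ref{minkowski2})\Rightarrow(\ref{minkowski4})$ match the paper in substance.
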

\begin{theorem}\label{newtheorefnew} For the $\set{\mathcal{C}^j}_{j=1}^m$ from Definition~\ref{generalformnonid} and for any $\lambda\in \Delta^k$ let $Q^m\bra{\lambda}:=\bigcap_{j=1}^m \sum_{i=1}^k \lambda_i C^{j,i}$. Formulation \eqref{sharpidealform} is sharp if and only if
\[\max\nolimits_{i=1}^k \sigma_{C^i}\bra{u}=\max\nolimits_{\lambda\in \Delta^k}\sigma_{Q^m\bra{\lambda}}\bra{u} \quad \forall u\in \Real^n.\]
Formulation \eqref{sharpidealform} is ideal if and only if
\[\sum\nolimits_{i=1}^k \lambda_i \sigma_{C^i}\bra{u}=\sigma_{Q^m\bra{\lambda}}\bra{u} \quad \forall \lambda\in \Delta^k,\;u\in \Real^n,\]
or equivalently if and only if
\[\bra{1/k}\sum\nolimits_{i=1}^k \sigma_{C^i}\bra{u}=\sigma_{Q^m\bra{\bra{1/k}{\mathbf 1}}}\bra{u}\quad \forall u\in \Real^n.\]
Finally, the equivalences can be written as a function of $\sigma_{C^{j,i}}$ by noting that
\[\sigma_{Q^m\bra{\lambda}}\bra{u}=\liminf_{\bar{u}\to u} \inf\set{\sum\nolimits_{j=1}^m \sum\nolimits_{i=1}^k \lambda_i \sigma_{C^{j,i}}\bra{u^j}\,:\,\sum\nolimits_{j=1}^m u^j=\bar{u}}.\]
In particular, formulation \eqref{sharpidealform} is ideal if and only if for all $u\in \Real^n$
\begin{equation}\label{generalidealcondition}
\hspace{-0.1in}\sum\nolimits_{i=1}^k \sigma_{C^i}\bra{u}=\liminf_{\bar{u}\to u} \inf\set{\sum\nolimits_{j=1}^m \sum\nolimits_{i=1}^k \sigma_{C^{j,i}}\bra{u^j}\,:\,\sum\nolimits_{j=1}^m u^j=\bar{u}}.
\end{equation}
\end{theorem}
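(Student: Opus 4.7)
My plan is to apply Proposition~\ref{embpart2} to the continuous relaxation $Q_\text{rel}:=\bigcap_{j=1}^m Q\bra{\mathcal{C}^j}$ of formulation \eqref{sharpidealform}, and then translate the resulting conditions on Minkowski sums into conditions on support functions. First I would verify the hypotheses of Proposition~\ref{embpart2} for $Q_\text{rel}$: since each $Q\bra{\mathcal{C}^j}\subseteq \Real^n\times \Delta^k$ we have $Q_\text{rel}\subseteq \Real^n\times\Delta^k$, and property \eqref{begingformulation} holds because $\bra{x,\e^i}\in Q\bra{\mathcal{C}^j}$ iff $x\in C^{j,i}$, so $\bra{x,\e^i}\in Q_\text{rel}$ iff $x\in\bigcap_j C^{j,i}=C^i$. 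Applying Proposition~\ref{embpart2} to each $Q\bra{\mathcal{C}^j}$ itself identifies its $\bar{y}$-fiber with $\sum_i \bar{y}_i C^{j,i}$, hence the fiber of $Q_\text{rel}$ at $\bar{y}\in\Delta^k$ equals $Q^m\bra{\bar{y}}$.

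Idealness is exactly $Q_\text{rel}=Q\bra{\mathcal{C}}$, which by the equivalence of items \ref{minkowski1}--\ref{minkowski4} in Proposition~\ref{embpart2} reduces to the fiberwise inclusion $Q^m\bra{\bar{y}}\subseteq \sum_i \bar{y}_i C^i$ for all $\bar{y}\in\Delta^k$ (equivalently, just at $\bar{y}=(1/k)\mathbf{1}$). The reverse inclusion is automatic from $C^i\subseteq C^{j,i}$, so the condition becomes the set equality $Q^m\bra{\lambda}=\sum_i\lambda_i C^i$, and since both are closed convex sets this is equivalent to $\sigma_{Q^m\bra{\lambda}}\bra{u}=\sum_i \lambda_i \sigma_{C^i}\bra{u}$ for all $u$ (using $\sigma_{A+B}=\sigma_A+\sigma_B$ coming from $\lambda_i\geq 0$). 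For sharpness, the projection onto $x$ equals $\bigcup_{\lambda\in\Delta^k}Q^m\bra{\lambda}$ while $\conv\bra{\bigcup_i C^i}=\bigcup_\lambda \sum_i\lambda_i C^i$; equating their support functions ($\sigma_{\bigcup_\lambda S_\lambda}=\sup_\lambda \sigma_{S_\lambda}$ and $\sigma_{\conv\bra{\bigcup_i C^i}}=\max_i \sigma_{C^i}$) gives the first claimed equivalence $\max_i\sigma_{C^i}\bra{u}=\max_\lambda\sigma_{Q^m\bra{\lambda}}\bra{u}$.

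Finally, to express $\sigma_{Q^m\bra{\lambda}}$ in terms of the $\sigma_{C^{j,i}}$, I would combine two standard identities from convex analysis (see Chapter V in \cite{hiriart-lemarechal-2001}): the Minkowski-sum identity $\sigma_{\sum_i\lambda_i C^{j,i}}\bra{u}=\sum_i\lambda_i\sigma_{C^{j,i}}\bra{u}$, and the fact that the support function of an intersection of closed convex sets is the lower-semicontinuous closure of the infimal convolution of their support functions. The closure operation produces exactly the $\liminf_{\bar{u}\to u}$ that appears in the statement. Substituting into the idealness condition with $\lambda=(1/k)\mathbf{1}$ and cancelling the scalar $1/k$ yields \eqref{generalidealcondition}.

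The main technical obstacle I anticipate is the interplay of topological closures: the union $\bigcup_{\lambda\in\Delta^k}Q^m\bra{\lambda}$ is not a priori closed (so set equality and support-function equality differ on the boundary), and the infimal convolution in the intersection formula becomes the true support function only after closure. These are precisely what force the $\liminf$ in \eqref{generalidealcondition}, and care is also needed when some $\sigma_{C^i}\bra{u}$ equal $+\infty$, since then both sides should be interpreted in the extended reals and the domain considerations from Theorem~3.3.2 in \cite{hiriart-lemarechal-2001} (used in the proof of Proposition~\ref{newtheoref}) transfer to the present setting.
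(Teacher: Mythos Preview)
Your proposal is correct and follows essentially the same approach as the paper's proof: apply Proposition~\ref{embpart2} to identify the fibers of $\bigcap_{j=1}^m Q\bra{\mathcal{C}^j}$ at each $\lambda\in\Delta^k$ with $Q^m\bra{\lambda}$, reduce idealness and sharpness to the corresponding set equalities with Minkowski sums, and then pass to support functions via the standard identities from \cite{hiriart-lemarechal-2001} (including the closed infimal-convolution formula for intersections that produces the $\liminf$). Your discussion of the closure subtleties is a useful elaboration, but the overall line of argument matches the paper's.
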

\begin{proof}
By Theorem C.3.3.2 in \cite{hiriart-lemarechal-2001}  we obtain the characterization of $\sigma_{Q^m\bra{\lambda}}$ and that for all $u\in \Real^n$ we have that $\sigma_{\conv\bra{\bigcup_{i=1}^k C^i} }\bra{u}=\max_{i=1}^k \sigma_{C^i}\bra{u}$, $\sigma_{ \sum_{i=1}^k \lambda_i C^i}\bra{u}=\sum_{i=1}^k \lambda_i \sigma_{C^i}\bra{u}$ and $\sigma_{\bigcup_{\lambda\in \Delta^k} Q^m\bra{\lambda}}\bra{u}=\max_{\lambda\in \Delta^k} \sigma_{Q^m\bra{\lambda}}\bra{u}$.

The result for being sharp follows from Proposition~\ref{embpart2} implying $\bigcap_{j=1}^m Q\bra{\mathcal{C}^j}=\bigcup_{\lambda\in \Delta^k} Q^m\bra{\lambda} \times \set{\lambda}$ and hence that its projection onto the $x$ variables is $\bigcup_{\lambda\in \Delta^k} Q^m\bra{\lambda}$. The results for being ideal follow from Proposition~\ref{embpart2} implying that $\bigcap_{j=1}^m Q\bra{\mathcal{C}^j}=Q\bra{\mathcal{C}}$ if and only if $Q^m\bra{\lambda}=\sum_{i=1}^k \lambda_i C^i$ for all $\lambda\in \Delta^k$ or equivalently if $Q^m\bra{\bra{1/k}{\mathbf 1}}=(1/k)\sum_{i=1}^k C^i$.\qed
\end{proof}
The conditions for formulation \eqref{sharpidealform} being ideal and sharp from Theorem~\ref{newtheorefnew} can be contrasted by noting that for all $u\in \Real^n$
\[\max\nolimits_{i=1}^k \sigma_{C^i}\bra{u}=\max\nolimits_{\lambda\in \Delta^k} \sum\nolimits_{i=1}^k \lambda_i\sigma_{C^i}\bra{u}\geq \sum\nolimits_{i=1}^k(1/k)\sigma_{C^i}\bra{u}.\]
Hence, being sharp requires matching the maximum weighted average of the support functions while being ideal requires matching all weighted averages or equivalently the equal weight average or simply the sum.

The necessary and sufficient condition \eqref{generalidealcondition} for being ideal of Theorem~\ref{newtheorefnew} can in turn be contrasted with condition \eqref{par2} of Proposition~\ref{newtheoref} which requires
\[\forall u\in \Real^n\quad \exists j\in\sidx{m} \text{ s.t. }\sigma_{C^i}\bra{u}=\sigma_{C^{j,i}}\bra{u}\; \forall i\in \sidx{k}.\]
For instance, condition \eqref{generalidealcondition} can be simplified to replace condition \eqref{par2} with the slightly weaker condition
\blue{
\beq\label{ex5cond}\begin{alignedat}{3}\forall u\in \Real^n\; \exists \set{u^j}_{j=1}^m\subseteq \Real^n \; &\text{ s.t. }\\ u	=\sum\nolimits_{j=1}^m u^j \quad\text{ and }&\quad\sigma_{C^i}\bra{u}=\sum\nolimits_{j=1}^m\sigma_{C^{j,i}}\bra{u^j}\quad \forall i\in \sidx{k}.\end{alignedat}\eeq
}
We can check that sets $\{\mathcal{C}^j\}_{j=1}^2$ in the first part of Example~\ref{generalexamplecone} satisfy condition \eqref{ex5cond}, but only if we add recession cone $C^{j,0}_\infty$ following Lemma~\ref{commontangent2} (cf. the left side of Figure~\ref{gcfig1} where the dotted curve describes $C^{1,1}$ if we do not add the cone). Similarly to the comments after Example~\ref{example2b}, one way to interpret the need to satisfy condition \eqref{ex5cond} for Example~\ref{generalexamplecone} is to ensure that there is a non-zero intersection of the normals to $\{C^{1,i}\}_{i=1}^2$ at the portions of the boundary highlighted in white in Figure~\ref{gcfig1}. The following corollary formalized this idea into a sufficient condition that can be useful to verify that formulation \eqref{sharpidealform} is ideal and/or to guide the construction of $\{\mathcal{C}^j\}_{j=1}^m$ to obtain an ideal formulation.
\begin{corollary}\label{alternativetheo}Let the $\set{\mathcal{C}^j}_{j=1}^m$ from Definition~\ref{generalformnonid} be such that  $\aff\bra{Q\bra{\mathcal{C}}}\subseteq \bigcap_{j=1}^m Q\bra{\mathcal{C}^{j}}$, and for $\mathcal{D}:=\set{\mathcal{D}^i}_{i=1}^k$ in $\Real^n$ let $L\bra{\mathcal{D}}:=\sum_{i=1}^k L\bra{D^i}$ and
$N\bra{\mathcal{D}}:=\set{\bra{{x}^i}_{i=1}^k\in \cart\nolimits_{i=1}^k \bd\bra{D^i}  \,:\, L\bra{\mathcal{D}}\cap\bigcap\nolimits_{i=1}^k N_{D^i}\bra{{x}^i} \neq \set{\bf 0}}$.
Then formulation \eqref{sharpidealform} is ideal if and only if
\begin{equation}\label{generalnormalconecond}
N\bra{\mathcal{C}}\subseteq \bigcup\nolimits_{j=1}^m N\bra{\mathcal{C}^j}.
\end{equation}
\end{corollary}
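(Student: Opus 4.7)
The plan is to reduce idealness of \eqref{sharpidealform} to the set equality $Q\bra{\mathcal{C}} = \bigcap_{j=1}^m Q\bra{\mathcal{C}^j}$ via Proposition~\ref{embpart2}, and then translate this equality into the normal-cone condition \eqref{generalnormalconecond} through the boundary decomposition of Proposition~\ref{bdprop}. Because $C^i = \bigcap_j C^{j,i}$, the inclusion $Q\bra{\mathcal{C}} \subseteq \bigcap_j Q\bra{\mathcal{C}^j}$ is automatic, and combining it with the hypothesis $\aff\bra{Q\bra{\mathcal{C}}} \subseteq \bigcap_j Q\bra{\mathcal{C}^j}$ gives a common affine hull. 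Hence idealness reduces to the reverse inclusion --- equivalently, to equality of relative interiors --- in this common affine hull.

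For sufficiency, fix $\mathcal{X}=(x^{*,i})_{i=1}^k \in N\bra{\mathcal{C}}$ and pick $j^*$ with $\mathcal{X} \in N\bra{\mathcal{C}^{j^*}}$. A witness $u' \in L\bra{\mathcal{C}^{j^*}} \cap \bigcap_i N_{C^{j^*,i}}\bra{x^{*,i}}$ yields, via the derivation leading to \eqref{relbdunion1}, an exposing hyperplane $H$ of $Q\bra{\mathcal{X}}$ on $Q\bra{\mathcal{C}^{j^*}}$. Since $Q\bra{\mathcal{X}} \subseteq Q\bra{\mathcal{C}} \subseteq \bigcap_j Q\bra{\mathcal{C}^j} \subseteq Q\bra{\mathcal{C}^{j^*}}$ and $Q\bra{\mathcal{X}} = Q\bra{\mathcal{C}^{j^*}}\cap H$, the same $H$ exposes $Q\bra{\mathcal{X}}$ as a face of the intersection. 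Together with the slices $\set{y_i=0}$ from \eqref{relbdunion2}, which are shared by every $Q\bra{\mathcal{C}^j}$, these faces cover $\relbd\bra{Q\bra{\mathcal{C}}}$; separation within the common affine hull then rules out any point of $\bigcap_j Q\bra{\mathcal{C}^j} \setminus Q\bra{\mathcal{C}}$.

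For necessity, assume the formulation is ideal and fix $\mathcal{X}\in N\bra{\mathcal{C}}$. Pick $(x^*,y^*) \in \relint\bra{Q\bra{\mathcal{X}}}$; then $(x^*,y^*)\in\relbd\bra{\bigcap_j Q\bra{\mathcal{C}^j}}$, and using $\relint\bra{\bigcap_j Q\bra{\mathcal{C}^j}} = \bigcap_j \relint\bra{Q\bra{\mathcal{C}^j}}$ (common affine hull with non-empty interior intersection) gives $(x^*,y^*) \in \relbd\bra{Q\bra{\mathcal{C}^{j^*}}}$ for some $j^*$. Proposition~\ref{bdprop} applied to $\mathcal{C}^{j^*}$ places $(x^*,y^*)$ either in a slice $\set{y_i=0}$ (ruled out because a relative-interior point of $Q\bra{\mathcal{X}}$ has all $y$-coordinates strictly positive) or in some $Q\bra{\mathcal{X}'}$ with $\mathcal{X}' \in N\bra{\mathcal{C}^{j^*}}$. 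A face-minimality argument --- the smallest face of $Q\bra{\mathcal{C}^{j^*}}\cap Q\bra{\mathcal{C}}$ through $(x^*,y^*)$ is $Q\bra{\mathcal{X}}$ --- then forces $\mathcal{X}' = \mathcal{X}$, giving $\mathcal{X} \in N\bra{\mathcal{C}^{j^*}}$.

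The main obstacle is the subspace mismatch between $L\bra{\mathcal{C}}$ and the $L\bra{\mathcal{C}^j}$: an exposing normal witnessing $\mathcal{X}\in N\bra{\mathcal{C}}$ in general need not lie in any $L\bra{\mathcal{C}^j}$, so in both directions one must independently produce (or extract) the exposing normal required by $N\bra{\mathcal{C}^{j^*}}$. The affine-hull hypothesis is precisely what lets such lifts go through by ensuring that the $\aff\bra{C^{j,i}}$ match $\aff\bra{C^i}$ up to compatible lineality, so that a face of $Q\bra{\mathcal{C}^{j^*}}$ restricts to a face of $Q\bra{\mathcal{C}}$ and vice versa. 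Carefully implementing these lifts, and verifying the $\mathcal{X}=\mathcal{X}'$ identification in the necessary direction, will be the most delicate parts of the proof.
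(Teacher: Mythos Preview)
Your approach is essentially the paper's: reduce idealness to $Q\bra{\mathcal{C}}=\bigcap_{j}Q\bra{\mathcal{C}^{j}}$, use the affine-hull hypothesis to match affine hulls, dispose of portion~\eqref{relbdunion2} via the automatic constraints $y_i\geq 0$, and translate portion~\eqref{relbdunion1} into \eqref{generalnormalconecond} through Proposition~\ref{bdprop}. The paper's argument is a two-line sketch along exactly these lines; you have simply unpacked it into separate sufficiency and necessity directions.

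One step in your necessity direction does not go through as written and should be replaced. You cannot conclude $\mathcal{X}'=\mathcal{X}$: the tuple $\mathcal{X}'=\bra{x'^{,i}}_i$ lives in $\cart_i\bd\bra{C^{j^*,i}}$ while $\mathcal{X}=\bra{x^{*,i}}_i$ lives in $\cart_i\bd\bra{C^{i}}$, and the relation $\sum_i y_i^*x^{*,i}=\sum_i y_i^*x'^{,i}$ coming from $(x^*,y^*)\in Q\bra{\mathcal{X}}\cap Q\bra{\mathcal{X}'}$ does not force pointwise equality. But you do not need $\mathcal{X}'=\mathcal{X}$. Let $(u',v')$ be the direction exposing the face of $Q\bra{\mathcal{C}^{j^*}}$ that contains $Q\bra{\mathcal{X}'}$, so that $(x^*,y^*)$ attains $\sigma_{Q\bra{\mathcal{C}^{j^*}}}(u',v')$. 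Since $(x^*,y^*)=\sum_i y_i^*\bra{x^{*,i},\e^i}$ with every $y_i^*>0$ and each $\bra{x^{*,i},\e^i}\in Q\bra{\mathcal{C}}\subseteq Q\bra{\mathcal{C}^{j^*}}$, every $\bra{x^{*,i},\e^i}$ also attains this maximum. By Lemma~\ref{bdpropauxlemma} this forces $x^{*,i}\in F_{C^{j^*,i}}(u')$ for all $i$, so the same $u'\in L\bra{\mathcal{C}^{j^*}}\setminus\set{\bf 0}$ witnesses $\mathcal{X}\in N\bra{\mathcal{C}^{j^*}}$ directly. This also sidesteps the ``face-minimality'' and $L$-mismatch worries you flagged in the necessity direction.
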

\begin{proof}
We have that $Q\bra{\mathcal{C}}=\bigcup_{j=1}^m Q\bra{\mathcal{C}^j}$ if and only if their affine hulls and relative boundaries match. Under the assumptions we have $Q\bra{\mathcal{C}}\subseteq \bigcap_{j=1}^m Q\bra{\mathcal{C}^j}$, $\aff\bra{Q\bra{\mathcal{C}}}\subseteq \bigcap_{j=1}^m Q\bra{\mathcal{C}^{j}}$ and $y_i\geq 0$ for all $i\in \sidx{n}$ and $\bra{x,y}\in  \bigcap_{j=1}^m Q\bra{\mathcal{C}^j}$. Hence, $Q\bra{\mathcal{C}}=\bigcup_{j=1}^m Q\bra{\mathcal{C}^j}$ if and only if portion \eqref{relbdunion2} of the boundary characterization of $Q\bra{\mathcal{C}}$ from Proposition~\ref{bdprop} is equal to the union of the same portions for the $Q\bra{\mathcal{C}^j}$, which is equivalent to \eqref{generalnormalconecond}.
\qed
\end{proof}
We can check that sets $\{\mathcal{C}^j\}_{j=1}^2$ in the first part of Example~\ref{generalexamplecone} also satisfy condition \eqref{generalnormalconecond} and redundant sets $\mathcal{C}^3$ are not needed to show formulation \eqref{generalconeex} is ideal. Now, in this case, the redundancy of $\mathcal{C}^3$ needed for Proposition~\ref{newtheoref} only resulted in easy to recognize duplicate inequalities in \eqref{generalconeex}. However, the following example shows how using Corollary~\ref{alternativetheo} instead of Proposition~\ref{newtheoref} can avoid more consequential redundancies.
\begin{example}\label{Example66}Consider again the sets from Example~\ref{Example22} given by $C^1=G^1\cap [0,r]^n$ for $G^1:=\set{x\in \Real^n\,:\, \prod_{j=1}^n (2 - x_j) \geq 1, \; x_j\leq 2\; \forall j\in \sidx{n}}$, and $C^2=[-2,0]^n$. The first version of these sets takes $r=2-2^{-1/n}$ and is depicted in Figure~\ref{3da} for $n=3$.
The redundancy analysis in the example yielded the simplified version of the formulation from Theorems~\ref{isotone} and \ref{isotonegeneral} for $x\in C^1\cup C^2$ given by
\begin{equation}\label{simpleformbd}
\gamma_{G^1}\bra{x}\leq y_1,\; -2 y_2  \leq x_j \leq r y_1,\; \forall j\in \sidx{n},\;
y_1+y_2=1,\; y\in \set{0,1}^2.
 \end{equation}
An alternative way to get this formulation is by noting that the boundary of $C^1$ has a polyhedral portion associated to the variable bounds and a non-polyhedral portion associated to $G^1$. This non-polyhedral portion is highlighted dark gray in Figure~\ref{3da1} for $n=3$, and for all $n$ it can be sub-divided into $\bd\bra{G^1}\cap\bra{0,r}^n$  and $\bd\bra{G^1}\cap\bd\bra{[0,r]^n}$. If $x^1\in \bd\bra{G^1}\cap\bra{0,r}^n$ we have that $N_{C^1}\bra{x^1}$ is contained in the strictly positive orthant. Hence for all $x^1\in \bd\bra{G^1}\cap\bra{0,r}^n$ we have $\bra{x^1,x^2}\in N\bra{\mathcal{C}}$ if and if $x^2={\bf 0}$, which is also  highlighted in dark gray in Figure~\ref{3da2}. In contrast, because of the choice of $r$ we have that if $x^1\in \bd\bra{G^1}\cap\bd\bra{[0,r]^n}$ then there exist $J\subseteq \sidx{n}$ such that $x^1\in \set{x\in \bd\bra{G^1}\,:\, x_j=r\quad \forall j\in J}$ and $\bra{x^1,x^2}\in N\bra{\mathcal{C}}$ if and if $x^2 \in \bigcup_{j\in J} \set{x\in [-2,0]^n\,:\, x_j=0}$. Then condition \eqref{generalnormalconecond} is satisfied for $\set{\mathcal{C}^j}_{j=1}^2$ given by  $C^{1,1}:=[0,r]^n$, $C^{1,2}:=[-2,0]^n$, $C^{2,1}=G^1$, $C^{2,2}=\Real^n_-$. In particular, the key for satisfying the condition is that for all $\bra{x^1,x^2}\in N\bra{\mathcal{C}}$ such that $x^1\in \bd\bra{G^1}\cap\bd\bra{[0,r]^n}$  we have that $\bra{x^1,x^2}\in N\bra{\mathcal{C}^1}$.  Finally, Corollary~\ref{alternativetheo} with this decomposition yields precisely \eqref{simpleformbd}.\qed
\end{example}
\begin{figure}[htpb]
\centering
\subfigure[Set $C^1$.]{\includegraphics[scale=0.3]{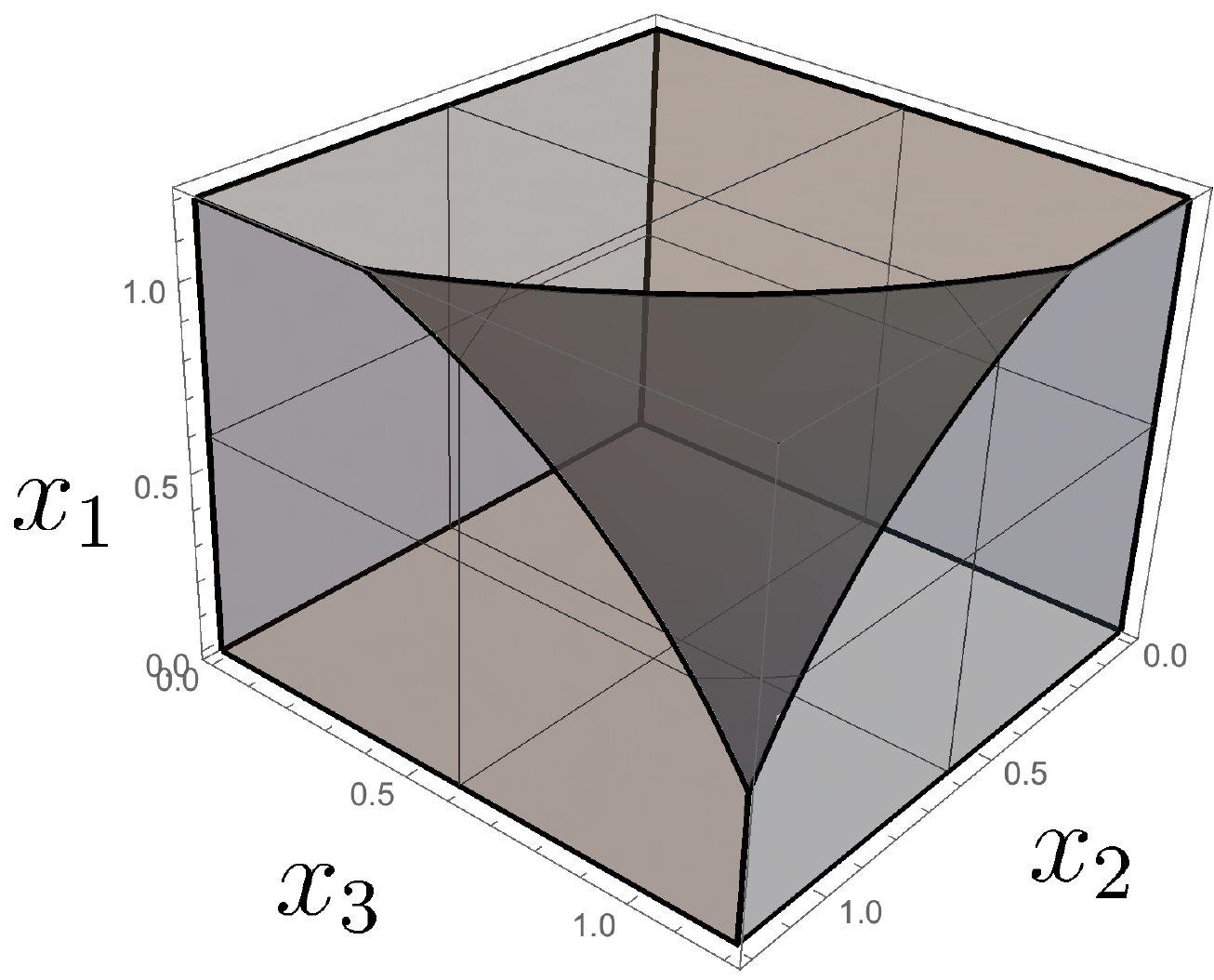}\label{3da1}}\quad\quad\quad
\subfigure[Set $C^2$.]{\includegraphics[scale=0.4]{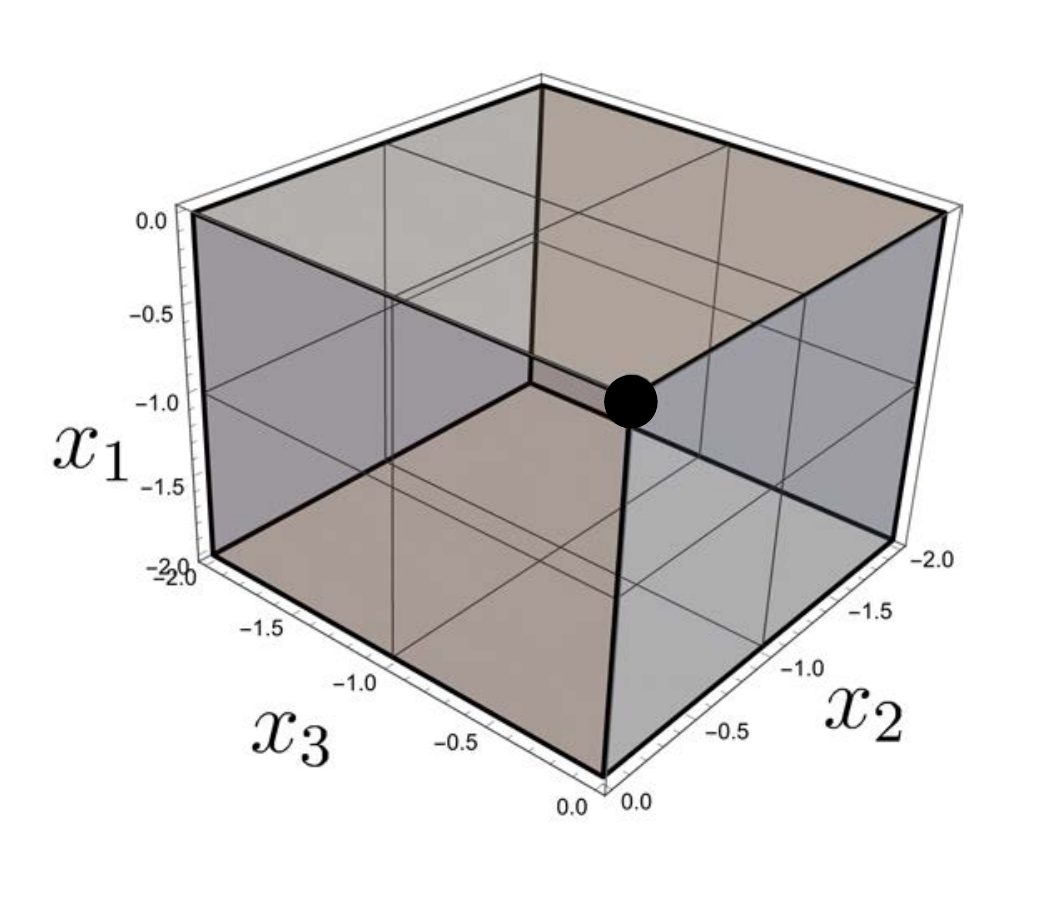}\label{3da2}}
\caption{Sets for Example~\ref{Example66}.}\label{3da}
\end{figure}

Our final example illustrates how Corollary~\ref{alternativetheo} can be used to show Theorem~\ref{isotone} and give a geometric interpretation of the associated formulation.

\begin{example}\label{Example33}Consider now the second version of the sets from Example~\ref{Example22} which corresponds to the same sets in Example~\ref{Example66}, but with $r=2$. These sets depicted in Figure~\ref{3db} for $n=3$. We can again use $\set{\mathcal{C}^j}_{j=1}^2$ given by  $C^{1,1}:=[0,r]^n$, $C^{1,2}:=[-2,0]^n$, $C^{2,1}=G^1$, $C^{2,2}=\Real^n_-$
to get valid formulation \eqref{simpleformbd}. However, from Example~\ref{Example22}  we know that for this choice of $r$ this formulation is no longer ideal. Indeed, condition \eqref{generalnormalconecond} of Corollary~\ref{alternativetheo} is no longer  satisfied because we no longer have $\bra{x^1,x^2}\in N\bra{\mathcal{C}^1}$ for all $\bra{x^1,x^2}\in N\bra{\mathcal{C}}$ such that $x^1\in \bd\bra{G^1}\cap\bd\bra{[0,s]^n}$. For instance, if $x^1\in D^1:=\bd\bra{G^1}\cap\bra{(0,s)^{n-1}\times \set{0}}$ (highlighted in dark gray in Figure~\ref{3db1}) and $x^2\in D^2:=\bd\bra{C^2}\cap\bra{\set{0}^{n-1}\times[-r,0]}$ (highlighted in dark gray in Figure~\ref{3db2}) we have that $\bra{x^1,x^2}\in N\bra{\mathcal{C}}$, but $x^1\notin \bd\bra{C^{1,1}}$. This specific case can be resolved by adding $\mathcal{C}^4$ such that $C^{4,1}:=G^1+\spann\bra{\set{\e^n}}$ and $C^{4,2}:=G^1_\infty+\spann\bra{\set{\e^n}}=(-\infty,0]^{n-1}\times \set{0}$ ($C^{4,1}$ is depicted in Figure~\ref{3db1} by the transparent meshed surface), as $\bra{x^1,x^2}\in N\bra{\mathcal{C}^4}$ for all $\bra{x^1,x^2}\in D^1\times D^2$. Similarly, we can resolve all additional cases and satisfy condition \eqref{generalnormalconecond} by adding $\mathcal{C}^J$ such that $C^{J,1}:=G^1+\spann\bra{\set{\e^j}_{j\in J}}$ and $C^{J,2}:=G^1_\infty+\spann\bra{\set{\e^j}_{j\in J}}$ for all $J\subseteq \sidx{n}$ with $\abs{J}\leq n-1$. By noting that $\gamma_{C^{J,1}}\bra{x}=\gamma_{G^1}\bra{\brac{x}_{ J}}$, we have that the formulation obtained from Corollary~\ref{alternativetheo} for $\set{\mathcal{C}^J}_{J\subseteq \sidx{n}}$ is precisely formulation \eqref{oldisotoneexfor} obtained from Theorem~\ref{isotone}.\qed
\end{example}
\begin{figure}[htpb]
\centering
\subfigure[Set $C^1$.]{\includegraphics[scale=0.45]{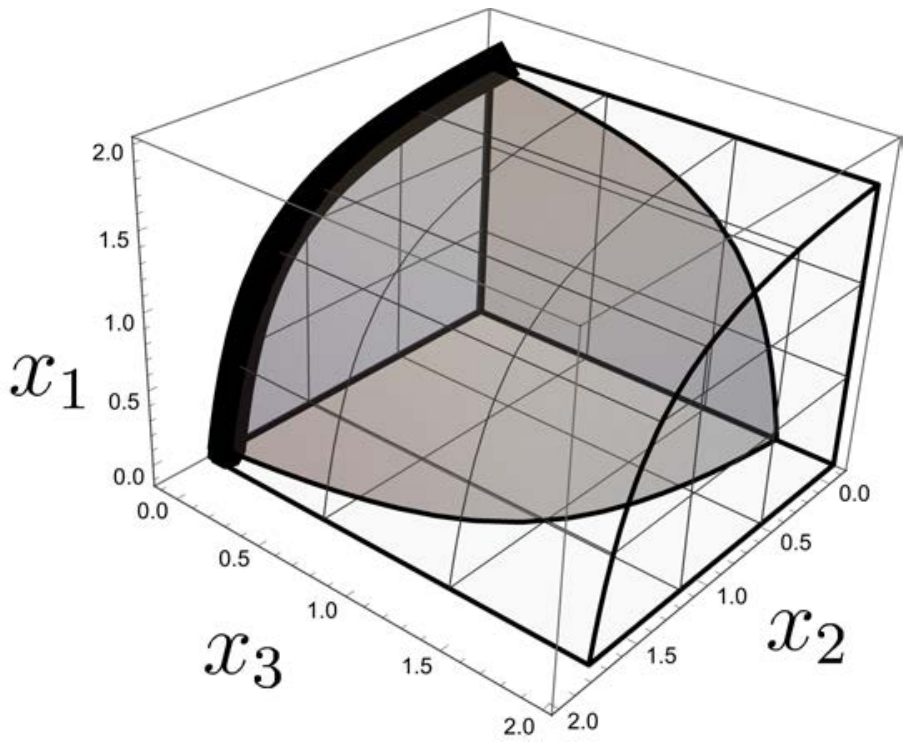}\label{3db1}}\quad\quad\quad
\subfigure[Set $C^2$.]{\includegraphics[scale=0.42]{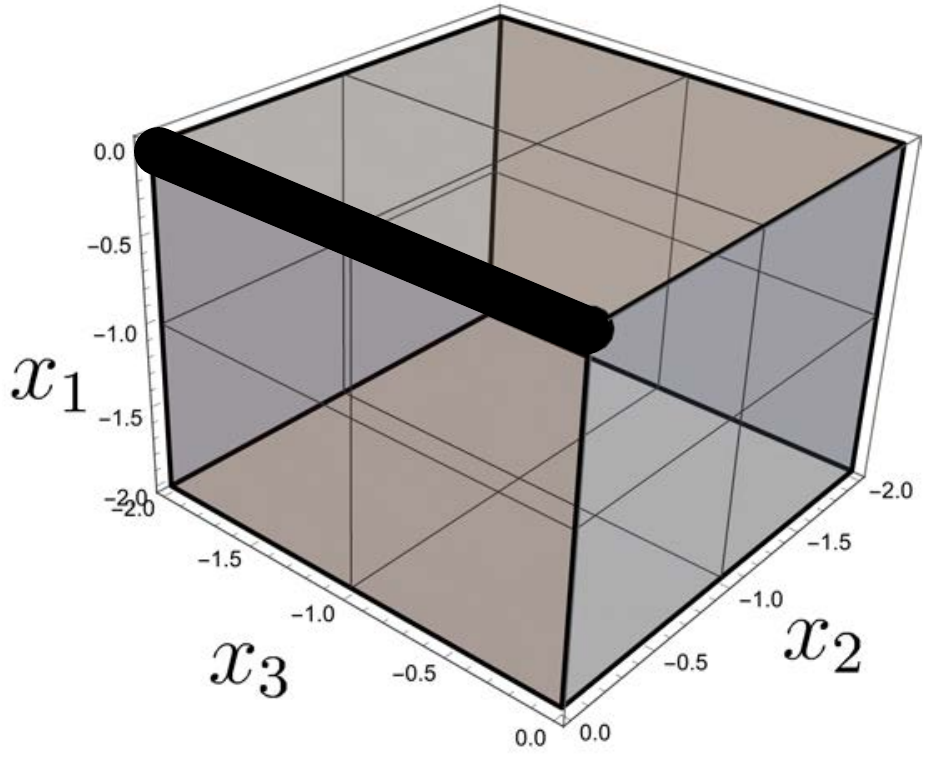}\label{3db2}}
\caption{Sets for Example~\ref{Example33}.}\label{3db}
\end{figure}

\blue{
\section{Conclusions}

Modeling disjunctive constraints with ideal MIP formulations that avoid the variable copies of standard convex hull formulations can provide a computational advantage over both convex hull and Big-M formulations \cite{lodi15,Hijazi}. Unfortunately, existing techniques to construct such formulations are restricted to special structures or require ad-hoc decomposition techniques. In this paper, we  introduced systematic and generic construction tools for these formulations. The tools do require understanding the geometry of the disjunctive constraints. However,  when this understanding is available, the techniques are easily applicable even for high-dimensional constraints, constraints with a large number of terms, and highly non-polyhedral constraints (e.g. Example~\ref{generalexamplecone}). The resultant formulations can usually be represented in a format compatible with MIP solvers through simple \emph{gauge-calculus} (e.g. Lemma~\ref{gaugelemma}). However, these representations may include maximum operations that could lead to differentiability issues (e.g. Examples~\ref{quadrsticsin}, \ref{quadrsticsin2} and \ref{Example22}). Such  issues can be avoided through standard linear programming tricks, but these introduce continuous auxiliary variables that are similar to the variable copies we aimed to avoid. Nonetheless, these auxiliary variables may not necessarily  have the same negative computational effect as the variable copies. Finally, as illustrated in Example~\ref{Example22},  the max operation and the continuous auxiliary variables can sometimes be avoided with little or no loss of formulation strength, and, as illustrated in \cite{lodi15,Hijazi}, even when some strength is lost, the  formulations can still provide an advantage.
}
\section{Omitted Proofs}\label{proofsec}

\subsection{Theorem \ref{extendedTheorem} and Proposition~\ref{embpart}}\label{Moreongaugesec}

\begin{proof}[of Theorem \ref{extendedTheorem}]
Validity is direct from Lemma~\ref{gaugepropertiesprop}, 	$C^i_\infty=\bra{C^i-a^i}_\infty$ and $\mathcal{C}\in \mathbb{C}_n$. For idealness, let $Q$ be the continuous relaxation of \eqref{extendedformulation} and assume for a contradiction that there exist a minimal face $F$ of $Q$  and   $(x,y)\in F$  with $y\notin \set{0,1}^k$. Without loss of generality $y_1,y_2\in (0,1)$.  Let $\varepsilon=\min\{y_1,y_2,1-y_1,1-y_2\}\in (0,1)$, $\underline{y}_1=\overline{y}_2=y_1+\varepsilon$, $\underline{y}_2=\overline{y}_1=y_2-\varepsilon$, $\underline{y}_i=\overline{y}_i=y_i$ for all  $i\notin\set{1,2}$,
	$\underline{x}^i=(\underline{y}_i/y_i)x^i$ and $\overline{x}^i=(\overline{y}_i/y_i)x^i$  for $i\in\set{1,2}$, $\underline{x}^i=\overline{x}^i=x^i$  for all  $i\notin\set{1,2}$, $\underline{x}=\sum_{i=1}^k\underline{x}^i$  and $\overline{x}=\sum_{i=1}^k\overline{x}^i$. Then $(\underline{x},\underline{y})\neq(\overline{x},\overline{y})$, $(x,y)=(1/2)(\underline{x},\underline{y})+(1/2)(\overline{x},\overline{y})$. Multiplying $\gamma_{C^i-b^i}\bra{x^i-b^i y_i}\leq y_i$ for $i\in \set{1,2}$ by $\underline{y}_i/y_i$ or $\overline{y}_i/y_i$ and using the positive homogeneity of $\gamma_{C^i-b^i}$ we have   $(\underline{x},\underline{y}), (\overline{x},\overline{y})\in Q$. Hence, $(\underline{x},\underline{y}), (\overline{x},\overline{y})\in F$. Furthermore,  by construction either $\underline{y}_1=1$, $\underline{y}_2=0$, $\overline{y}_1=0$ or $\overline{y}_2=1$. If $\underline{y}_1=1$, then $\set{\bra{x,y}\in F\,:\, y_1=1}\subsetneq F$ is a face of the continuous relaxation of \eqref{extendedformulation}, which contradicts the minimality of $F$. All other three cases are analogous. 	The final statement follows from the recession cone of the continuous relaxation of \eqref{extendedformulation} being equal to all $\bra{x,y}$ such that $x\in C^1_\infty$, $y=0$ and $x^i\in C^1_\infty$ for all $i\in \sidx{k}$. \qed
\end{proof}

\begin{proof}[of Proposition~\ref{embpart}]
Part \ref{embpart1} follow directly from Corollary 9.8.1 in \cite{rockafellar2015convex} by noting that if $\set{C^i}_{i=1}^k\in \mathbb{C}_n$
 then $\set{C^i\times\set{e^i}}_{i=1}^k\in \mathbb{C}_{n+1}$. For part  \ref{embpart3} note that we have that $\bra{x,y}\in Q\bra{\mathcal{C}}$ if and only if $y\in \Real^k_+$, $\sum_{i=1}^k y_i=1$ and
\beq\label{sumcond1}
\exists \bra{\tilde{x}^i}_{i=1}^k\in \cart\nolimits_{i=1}^k C^i \text{ s.t. } x=\sum\nolimits_{i=1}^k y_i \tilde{x}^i.
\eeq
The result follows directly if   \eqref{sumcond1} is equivalent to
\beq\label{sumcond2}
\exists \bra{x^i}_{i=1}^k\in \Real^{n\cdot k} \text{ s.t. } x=\sum\nolimits_{i=1}^k  x^i \text{ and } \gamma_{C^i-b^i}\bra{x^i-b^i y_i}\leq y_i.
\eeq
 To show this equivalence first note that $\tilde{x}^i\in C^i$ if and only if  $\gamma_{C^i-b^i}\bra{\tilde{x}^i-b^i }\leq 1$, and if $y_i>0$ this last condition is in turn equivalent to $\tilde{x}^i=x^i/y_i$ for some $x^i\in \Real^n$ such that $\gamma_{C^i-b^i}\bra{x^i-b^i y_i}\leq y_i$. Then note that if $y_i=0$, then $\gamma_{C^i-b^i}\bra{x^i-b^i y_i}\leq y_i$ if and only if $x^i\in C^i_\infty$. To show that \eqref{sumcond1} implies \eqref{sumcond2} simply let $x^i=y_i \tilde{x}^i $. For the reverse implication assume without loss of generality that $y_1>0$ and let $I_0=\set{i\in \sidx{k}\,:\, y_i=0}$. Then  $\tilde{x}^1=x^1/y_1 +\sum_{i\in I_0} x^i\in C^1$ by  $\set{C^i}_{i=1}^k\in \mathbb{C}_n$. Finally, the implication follows because $\tilde{x}^i=x^i/y_i\in C^i $ for all $i\in \sidx{k}\setminus I_0$. Part \ref{embpart44} follows directly from part  \ref{embpart3}.	\qed
\end{proof}

\subsection{Proof of Proposition~\ref{generalorthprop}}\label{proofofgeneralorthpropsec}

\begin{lemma}\label{finalfinalgaugelemma}Let $C\subseteq \Real^n$ be a closed convex set containing $\bf 0$, $\set{v^j}_{j=1}^n\subseteq \Real^n$ be an orthonormal basis of $\Real^n$, $s\in \set{-1,0,1}^n$, $t\in \set{-1,1}^n$, $K=\cone\bra{\set{ s_j v^j}_{j=1}^n}$, $M=\cone\bra{\set{ t_j v^j}_{j=1}^n}$ and
 $u^j=(-s_j t_j)^+ s_j v^j$ for all $j\in \sidx{n}$.
If $C\cap K$ is compact and  $\bra{\bra{C\cap K}-K}\cap K=C\cap K$ then $\bra{x,y}\in\epi\bra{\gamma_{C\cap K+M}}$ if and only if
\begin{equation}\label{gaugeconechar}
\gamma_C\bra{\sum\nolimits_{j=1}^n u^j \bra{u^j\cdot x}^+ }\leq y,\; ((1-\abs{s_j})+(s_j t_j)^+)t_j  v^j\cdot x \geq 0 \; \forall j\in  \sidx{n}.\end{equation}
\end{lemma}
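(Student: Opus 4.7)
The plan is to directly verify the equivalence $(x,y) \in \epi(\gamma_{C \cap K + M})$ iff $(x,y)$ satisfies \eqref{gaugeconechar}, handling $y>0$ and $y=0$ separately. First I would simplify notation by setting $J := \set{j \in \sidx{n} : s_j = -t_j}$, which is precisely the set of indices with $u^j \neq 0$: for $j \in J$ we have $u^j = s_j v^j$, a unit vector, so $P(x) := \sum_{j=1}^n u^j (u^j \cdot x)^+ = \sum_{j \in J} s_j v^j (s_j v^j \cdot x)^+$ is the orthogonal projection of $x$ onto $K' := \cone(\set{u^j}_{j \in J}) \subseteq K$. The linear inequalities of \eqref{gaugeconechar} vanish for $j \in J$ and reduce to $t_j v^j \cdot x \geq 0$ for $j \notin J$ (where $s_j = 0$ or $s_j = t_j$).

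For $y > 0$, in the forward direction I assume $x = y a + b$ with $a \in C \cap K$ and $b \in M$, and expand $a = \sum_l \alpha_l s_l v^l$, $b = \sum_l \beta_l t_l v^l$ with nonnegative coefficients. A direct componentwise calculation confirms the linear inequalities of \eqref{gaugeconechar}. For the gauge inequality, define
\[
a^* := a - \sum\nolimits_{l \in J} \min(\alpha_l, \beta_l/y) s_l v^l = \sum\nolimits_{l \notin J} \alpha_l s_l v^l + \sum\nolimits_{l \in J} \max(0, \alpha_l - \beta_l/y) s_l v^l.
\]
Then $a^* \in K$ by construction and $a^* = a - k$ with $k \in K$, so the hypothesis $((C \cap K) - K) \cap K = C \cap K$ gives $a^* \in C \cap K$. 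Observing that $P(x)/y = a^* - \sum_{l \notin J} \alpha_l s_l v^l$ with the subtracted term again in $K$, and that $P(x)/y \in K' \subseteq K$, the hypothesis applied once more yields $P(x)/y \in C \cap K \subseteq C$, i.e.\ $\gamma_C(P(x)) \leq y$. For the reverse direction, I set $a := P(x)/y$ (which lies in $C \cap K$ by the gauge inequality together with $P(x) \in K'$) and $b := x - P(x)$; orthonormality of $\set{v^j}_{j=1}^n$ then gives $t_j v^j \cdot b = t_j v^j \cdot x \geq 0$ for $j \notin J$ (by the linear inequalities), while for $j \in J$ a direct computation using $t_j = -s_j$ yields $t_j v^j \cdot b = (t_j v^j \cdot x)^+ \geq 0$. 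Hence $b \in M$ and $x = y a + b \in y(C \cap K) + M = y(C \cap K + M)$.

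For $y = 0$, the goal is to show $(x, 0)$ satisfies \eqref{gaugeconechar} iff $x \in (C \cap K + M)_\infty = M$ (using compactness of $C \cap K$ and that $M$ is a cone). The ``if'' direction is a direct check. For ``only if'', the gauge constraint gives $P(x) \in C_\infty$ by Lemma~\ref{gaugepropertiesprop}; combined with $P(x) \in K' \subseteq K$ this yields $P(x) \in C_\infty \cap K = (C \cap K)_\infty$, which equals $\set{\bf 0}$ by compactness of $C \cap K$. Hence $P(x) = 0$, and the same componentwise analysis as above shows $x = x - P(x) \in M$. The main obstacle is the gauge inequality for $y > 0$: without the construction of $a^*$ and the twofold invocation of $((C \cap K) - K) \cap K = C \cap K$, it is not clear that the projection $P(x)/y$ lies back inside $C$ when one starts from an arbitrary decomposition $x = y a + b$ with $a \in C \cap K$, $b \in M$.
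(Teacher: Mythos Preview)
Your proposal is correct and follows essentially the same strategy as the paper's proof: decompose $x$ along the orthonormal basis, identify $P(x)$ with the part that must land in $C\cap K$, and use the hypothesis $((C\cap K)-K)\cap K=C\cap K$ to push $P(x)/y$ back into $C\cap K$. The only minor differences are that the paper first invokes Lemma~\ref{gaugelemma} to reduce to $y\in\{0,1\}$ (after observing that \eqref{gaugeconechar} cuts out a closed convex cone), and in the forward direction it constructs a single $\tilde{x}\in K$ with $x^C-\tilde{x}=P(x)$ directly, thereby applying the monotonicity hypothesis once rather than twice via your intermediate $a^*$; neither difference is material.
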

\begin{proof} Let $E$ be the region described by \eqref{gaugeconechar}. We have that $E$ is a closed convex cone such that $y\geq 0$ for all $\bra{x,y}\in E$ so by Lemma~\ref{gaugelemma} we just need to show that $\bra{x,0}\in E$ is equivalent to $x\in \bra{ C\cap K+M}_\infty=M$ and that $\bra{x,1}\in E$ is equivalent to $x\in  C\cap K+M$.

For the first implication of both equivalence let $y\in \set{0,1}$, $C_1=C$, $C_0=C_\infty$, $\bra{x,y}\in E$, $J=\set{j\in \sidx{n}\,:\, u^j\cdot x> 0}$, $x^C=\sum_{j\in J} v^j v^j\cdot x$ and $x^M=\sum_{j\in \sidx{k}\setminus J} v^j v^j\cdot x$. Because $\set{v^j}_{j=1}^n\subseteq \Real^n$ is an orthonormal basis we have $x=x^C+x^M$. Furthermore, $\sum\nolimits_{j=1}^n u^j (u^j\cdot x)^+=\sum\nolimits_{j=1}^n v^j v^j\cdot x^C=x^C$ so $x^C\in C_y$, and $s_j v^j \cdot x^C >0$ for all $j\in J$ and $s_j v^j \cdot x^C =0$ for all $j\in \sidx{n}\setminus J$ so $x^C\in K$. Finally, if $j \in \sidx{n}\setminus J$, then either (i) $s=-t$ and $t_j v^j\cdot x\geq 0$, or (ii) $s\in\set{0,t}$. In the second case the linear inequalities of \eqref{gaugeconechar}
imply  $t_j v^j\cdot x\geq 0$. Then $t_j v^j\cdot x^M\geq 0$ for all $j \in \sidx{n}\setminus J$ and $t_j v^j\cdot x^M= 0$ for all $j \in  J$. Hence, $x^M\in M$ and for $y=1$ we have $x\in C\cap K+M$. Similarly for  $y=0$ we have $x\in C_\infty\cap K+M= \bra{C\cap K}_\infty+M=M$ (cf. Proposition A.2.2.5 in \cite{hiriart-lemarechal-2001}).

For the reverse implication of the first equivalence note that if $x\in M$, then  $\bra{x,0}\in E$ because $x$ satisfies the linear inequalities of  \eqref{gaugeconechar} and $\bra{u^j\cdot x}^+=0$ for all $j\in \sidx{n}$. For the second equivalence let $x=x^C+x^M$ with $x^C\in C\cap K$ and $x^M\in M$. Then $x$ satisfies the linear inequalities of  \eqref{gaugeconechar}  because both $x^C$ and $x^M$ satisfy them. Now let $J:=\set{j\in \sidx{n}\,:\, s_j=-t_j,\; s_jv^j\cdot\bra{x^C+x^M}> 0}$ and
$\tilde{x}:=\sum\nolimits_{j\in \sidx{n}\setminus J} v^j v^j\cdot x^C \quad$ $+\sum\nolimits_{j\in J} v^j\bra{-v^j x^M}$.
By the definition of $J$ and because $x^M\in M$ and $x^C\in K$ we have $\tilde{x}\in K$ and $x^C-\tilde{x}=\sum_{j\in J} v^j v^j\cdot\bra{x^C + x^M}\in K$. Then $x^C-\tilde{x}\in \bra{ \bra{C\cap K}-K}\cap K$ and hence by the assumption on $C$ and $K$ we have $x^C-\tilde{x}\in C$. Then $x$ satisfies non-linear inequality of  \eqref{gaugeconechar}  because $J=\set{j\in \sidx{n}\,:\, u^j\cdot \bra{x^C + x^M}> 0}$ and hence
$x^C-\tilde{x}=\sum_{j\in J} v^j v^j\cdot\bra{x^C + x^M}=\sum\nolimits_{j=1}^n u^j (u^j\cdot \bra{x^C + x^M})^+$.\qed
\end{proof}

\begin{proof}[of Proposition~\ref{generalorthprop}]
The result will follow from Proposition~\ref{embpart} by showing that \eqref{orthogonalplusprojcone} is the projection of the continuous relaxation of \eqref{extendedformulation} for the considered sets.
Noting that $(-0 t)^+ 0=0 $ for all $t\in \set{-1,1}$ we can use Lemma~\ref{finalfinalgaugelemma} to show that the continuous relaxation of  \eqref{extendedformulation} is given by
 \begin{subequations}\label{orthogonalplusprojconeext}
 % \begin{alignat}{3}\gamma_{G^i}\bra{\sum\nolimits_{j\in J_i} u^{i,j} \bra{u^{i,j}\cdot \bra{x^i-b^i y_i} }^+}&\leq y_i&\quad&\forall i\in \sidx{k}\\ \eqqq\bra{s_j^i,t_j} ((1-\abs{s_j^i})+(s_j^i t_j)^+)t_j v^j\cdot \bra{x^i-b^i y_i} &\geq 0 &\quad& \forall i\in \sidx{k}\,\; j\in  \sidx{n}\label{btobarb}\\
\begin{alignat}{3}\gamma_{G^i}\bra{\sum\nolimits_{j\in J_i} u^{i,j} \bra{u^{i,j}\cdot \bra{x^i-b^i y_i} }^+}&\leq y_i&\quad&\forall i\in \sidx{k}\\  ((1-\abs{s_j^i})+(s_j^i t_j)^+)t_j v^j\cdot \bra{x^i-b^i y_i} &\geq 0 &\quad& \forall i\in \sidx{k}\,\; j\in  \sidx{n}\label{btobarb}\\
 \sum\nolimits_{i=1}^k x^i=x,\quad\sum\nolimits_{i=1}^k y_i=1,\quad y_i &\geq 0 &\quad& \forall i\in\sidx{k}.
\end{alignat}\end{subequations}
Now, for all $i\in \sidx{k}$ and $j\in \sidx{n}$ such that $s_j=0$ or $s_j=t_j$ we have $\underline{b}^{i}_j=t_j v^j\cdot b^i$, so \eqref{btobarb} is dominated by $t_j v^j\cdot x^i-\underline{b}^{i}_j y_i \geq 0$ for all $i\in \sidx{k}$ and $j\in  \sidx{n}$ (the additional inequalities for case $s_j=t_j$ are clearly valid). To show that \eqref{orthogonalplusprojcone} is contained in the projection of \eqref{orthogonalplusprojconeext}  let $\bra{x,y}$ be feasible for \eqref{orthogonalplusprojcone} and for all $i\in \sidx{k}$ let  $\lambda_j^i:=y_i t_j \underline{b}^{i}_j$ if $j\in \sidx{n}\setminus J_i$ and $\lambda_j^i=v^j\cdot x-\sum_{l\in \sidx{k}\setminus \set{i}} \lambda_j^l$ if $j\in J_i$. Finally, for all $i\in\sidx{k}$ let $x^i=\sum_{j=1}^n \lambda_j^i v^j$. We can check that $\bra{x,\bra{x^i}_{i=1}^k,y}$ is feasible for \eqref{orthogonalplusprojconeext}. In particular, $\bra{x,y}$ feasible for \eqref{orthogonalplusprojcone1} implies $\bra{x^i,y_i}$ is feasible for \eqref{btobarb} because $u^{i,j}=0$ if $s^i_j\neq -t_j$ and if $s^i_j= -t_j$, then $\bra{-s_j^i t_j}^+s_j^it_j=-1$, $\underline{b}_j^l=-\bar{b}^{i,l}_j$, and hence $u^{i,j}\cdot \bra{x^i-b^i y_i}=u^{i,j}\cdot (x- b^i y_i)+\sum_{l\in \sidx{k}\setminus \set{i}}y_i  \underline{b}^{i}_j=u^{i,j}\cdot x-\sum\nolimits_{l=1}^k \overline{b}^{i,l}_j y_l $.
 The reverse inclusion follows from validity of \eqref{orthogonalplusprojcone} plus $y\in \mathbb{Z}^k$ as a formulation for $x\in \bigcup_{i=1}^k C^i$.\qed
 \end{proof}

\subsection{Proof of Proposition~\ref{bdprop}}\label{proofofbdprop}

\begin{proposition}\label{knwonbdlemma} For a closed convex set $C\subseteq \Real^n$ we have that $\relbd\bra{C}=\bigcup_{d\in D\cap L(C)} F_C\bra{d}$ for $D=L(C)\setminus \set{0}$ or  $D=L(C)\cap\dom\bra{\sigma_C}\setminus \set{0}$. In addition, if $u\in \Real^n$ and $w\in L(C)^\perp$, then $F_C(u)=F_C(u-w)$.
\end{proposition}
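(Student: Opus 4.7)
The plan is to first establish the auxiliary statement about $F_C\bra{u}=F_C\bra{u-w}$ for $w\in L\bra{C}^\perp$, then use it as the bridge for the main equality. The equivalence of the two choices of $D$ will follow from the observation that $F_C\bra{d}=\emptyset$ whenever $d\notin \dom\bra{\sigma_C}$: in that case $\sigma_C\bra{d}=\infty$ is not attained, so such $d$ contribute nothing to the union, and both unions agree.

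For the auxiliary statement, the key fact is that $y-x\in L\bra{C}$ for every $x,y\in C$, so $w\in L\bra{C}^\perp$ forces $w\cdot\bra{y-x}=0$. Hence $w\cdot z$ is equal to some constant $c_w$ for all $z\in C$, and therefore $\bra{u-w}\cdot z=u\cdot z-c_w$ on $C$. A constant shift of a linear functional preserves its set of maximizers on any set, so $\sigma_C\bra{u-w}=\sigma_C\bra{u}-c_w$ and $F_C\bra{u}=F_C\bra{u-w}$ follow immediately.

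For the inclusion $\bigcup_{d\in L\bra{C}\setminus\set{\bf 0}} F_C\bra{d}\subseteq \relbd\bra{C}$, I would argue by contradiction: given $d\in L\bra{C}\setminus\set{\bf 0}$ and $x\in F_C\bra{d}$, if $x\in \relint\bra{C}$ the standard characterization of the relative interior (moving from $x$ in any direction of $L\bra{C}$ stays in $C$ for small enough steps) yields some $\varepsilon>0$ with $x+\varepsilon d\in C$. But then $d\cdot\bra{x+\varepsilon d}=\sigma_C\bra{d}+\varepsilon\norm{d}^2>\sigma_C\bra{d}$, contradicting $d\cdot x=\sigma_C\bra{d}$.

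For the reverse inclusion, I would take $x\in\relbd\bra{C}$ and apply a \emph{proper} supporting hyperplane theorem (by separating $\set{x}$ from $\relint\bra{C}$ inside $\aff\bra{C}$ and extending the resulting functional to $\Real^n$) to obtain $u\in\Real^n$ with $u\cdot x=\sigma_C\bra{u}$ and $u\notin L\bra{C}^\perp$. Letting $d$ be the orthogonal projection of $u$ onto $L\bra{C}$ gives $d\in L\bra{C}$, $u-d\in L\bra{C}^\perp$, and $d\neq{\bf 0}$ since $u\notin L\bra{C}^\perp$. The auxiliary statement then yields $F_C\bra{d}=F_C\bra{u}\ni x$, placing $x$ in the desired union. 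The main obstacle is guaranteeing a \emph{proper} supporting hyperplane, i.e. one whose linear functional is not constant on $C$; this is precisely the content of the classical proper separation theorem applied to the boundary point $x$ and $\relint\bra{C}$, and once it is invoked the remaining projection-onto-$L\bra{C}$ step is routine.
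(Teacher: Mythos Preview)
Your proof is correct. For the second statement, your argument and the paper's are essentially identical: both observe that $w\cdot z$ is constant on $C$ (the paper phrases this via $\sigma_C(w)=\sigma_C(-w)$ and $\sigma_C(u-w)=\sigma_C(u)+\sigma_C(-w)$, you phrase it as a constant shift $c_w$), and conclude that maximizers of $u\cdot z$ and $(u-w)\cdot z$ over $C$ coincide.

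For the first statement the paper simply cites Proposition C.3.1.5 of Hiriart-Urruty and Lemar\'echal, whereas you give a self-contained argument. Your forward inclusion (moving into $\relint(C)$ along $d\in L(C)$ would beat $\sigma_C(d)$) and your reverse inclusion (proper separation at a relative-boundary point yields $u\notin L(C)^\perp$, then project onto $L(C)$ and invoke the auxiliary statement) are exactly the standard route one finds in that reference, so the two proofs ultimately coincide; yours just unpacks the citation. Your remark that directions $d\notin\dom(\sigma_C)$ contribute $F_C(d)=\emptyset$ cleanly handles the equivalence of the two choices of $D$, which the paper leaves implicit.
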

\begin{proof}
The proof of the first statement is identical to that of Proposition C.3.1.5 in \cite{hiriart-lemarechal-2001}. For the second note that by Definition C.2.1.4 and Proposition C.1.1.7 we have that $\sigma_C(w)=\sigma_C(-w)$ and $\sigma_C(u-w)=\sigma_C(u)+\sigma_C(-w)$. Furthermore, $-w\cdot x=\sigma_C(-w)$ for all $x\in C$. Then for any $x\in C$ we have
\begin{alignat*}{3}x\in F_C(u) &\Leftrightarrow u\cdot x =\sigma_C(u)\Leftrightarrow u\cdot x -w\cdot x=\sigma_C(u)+\sigma_C(-w) \\&\Leftrightarrow \bra{u-w}\cdot x=\sigma_C(u-w)\Leftrightarrow x\in F_C(u-w).\hspace{1in}\qed\end{alignat*}
\end{proof}

\begin{lemma}\label{bdpropauxlemma} Let $\mathcal{C}:=\set{C^i}_{i=1}^k\in \mathbb{C}_n$, $u\in \Real^n$ and $v\in \Real^k$. Then $\sigma_{Q\bra{\mathcal{C}}}\bra{u,v}=\max_{i=1}^k \sigma_{C^i}\bra{u}+ v\cdot e^i$
and $F_{Q\bra{\mathcal{C}}}\bra{{u,v}}=\conv\bra{\bigcup_{i\in I\bra{u,v}} F_{C^i}\bra{u}\times e^i }$ for $I\bra{u,v}:=\set{i\in \sidx{k}\,:\, \sigma_{C^i}\bra{u}+ v\cdot e^i=\sigma_{Q\bra{\mathcal{C}}}\bra{{u,v}}}$.
\end{lemma}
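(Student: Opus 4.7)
The plan is to compute both the support function value and the exposed face of $Q(\mathcal{C})$ by leveraging the convex-hull definition together with the explicit representation of points in $Q(\mathcal{C})$ established in the proof of Proposition~\ref{embpart}. For the support function identity, I would invoke Theorem 3.3.2 in \cite{hiriart-lemarechal-2001} (already used in the proof of Proposition~\ref{newtheoref}): $\sigma_{\conv(S)} = \sigma_S$ and the support function of a union is the pointwise maximum of the support functions of the constituents. Combined with the straightforward identity $\sigma_{C^i \times \{e^i\}}(u,v) = \sigma_{C^i}(u) + v \cdot e^i$, this immediately gives $\sigma_{Q(\mathcal{C})}(u,v) = \max_{i=1}^k \sigma_{C^i}(u) + v \cdot e^i$.

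For the exposed face characterization, denote $\alpha := \sigma_{Q(\mathcal{C})}(u,v)$ and prove the two inclusions. The inclusion $\conv\bigl(\bigcup_{i \in I(u,v)} F_{C^i}(u) \times \{e^i\}\bigr) \subseteq F_{Q(\mathcal{C})}(u,v)$ follows because any $(x, e^i)$ with $x \in F_{C^i}(u)$ and $i \in I(u,v)$ satisfies $(u,v) \cdot (x, e^i) = \sigma_{C^i}(u) + v \cdot e^i = \alpha$, hence lies in the exposed face; the exposed face is convex, so its convex hull is also contained.

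For the reverse inclusion I would use the representation derived in the proof of Proposition~\ref{embpart}: every $(x,y) \in Q(\mathcal{C})$ satisfies $y \in \Delta^k$ and admits $\tilde{x}^i \in C^i$ with $x = \sum_{i=1}^k y_i \tilde{x}^i$. Then
\[
(u,v) \cdot (x,y) = \sum\nolimits_{i=1}^k y_i \bigl(u \cdot \tilde{x}^i + v \cdot e^i\bigr) \leq \sum\nolimits_{i=1}^k y_i \bigl(\sigma_{C^i}(u) + v \cdot e^i\bigr) \leq \alpha,
\]
and if $(x,y) \in F_{Q(\mathcal{C})}(u,v)$ both inequalities are equalities. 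Forcing equality in the second yields $y_i > 0 \Rightarrow i \in I(u,v)$, while forcing equality in the first yields $y_i > 0 \Rightarrow \tilde{x}^i \in F_{C^i}(u)$. Writing $(x,y) = \sum_{i : y_i > 0} y_i (\tilde{x}^i, e^i)$ then exhibits $(x,y)$ as a convex combination of points in $\bigcup_{i \in I(u,v)} F_{C^i}(u) \times \{e^i\}$.

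I do not anticipate any substantive obstacle; the only subtlety is edge-case bookkeeping. If $\sigma_{C^i}(u) = \infty$ for some $i$ then both sides are consistent since the argument never selects such $i$ with $y_i > 0$. Similarly, if some $i \in I(u,v)$ has $F_{C^i}(u) = \emptyset$ (sup not attained), the argument simply does not use that index in the convex combination, so both sides of the identity agree (possibly both empty when \emph{every} $i \in I(u,v)$ has empty $F_{C^i}(u)$).
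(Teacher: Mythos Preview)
Your proposal is correct and follows essentially the same approach as the paper: both obtain the support function identity from Theorem~C.3.3.2 in \cite{hiriart-lemarechal-2001}, then use the convex-combination representation $(x,y)=\sum_i y_i(\tilde{x}^i,e^i)$ of points in $Q(\mathcal{C})$, derive the two-step inequality chain $u\cdot \tilde{x}^i + v\cdot e^i \le \sigma_{C^i}(u)+v\cdot e^i \le \sigma_{Q(\mathcal{C})}(u,v)$, and force both to equality whenever $y_i>0$. The only cosmetic difference is that you spell out the two inclusions separately and add explicit edge-case remarks, whereas the paper packages the argument as a single if-and-only-if.
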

\begin{proof} The characterization of $\sigma_{Q\bra{\mathcal{C}}}\bra{u,v}$ is direct from Theorem C.3.3.2 in \cite{hiriart-lemarechal-2001}. For the characterization of the face of $Q\bra{\mathcal{C}}$ exposed by $\bra{u,v}$ note that $\bra{x,y}\in F_{Q\bra{\mathcal{C}}}\bra{{u,v}}$ if and only if there exist $\lambda\in \Delta^k:=\set{\lambda\in \Real^k_+\,:\, \sum_{i=1}^k \lambda_i =1}$  and $x^i\in C^i$ for $i\in \sidx{k}$ such that $x=\sum_{i=1}^k \lambda_i x^i$, $y=\sum_{i=1}^k \lambda_i e^i$ and
\beq\label{facelemma1}
u\cdot x+v\cdot y= \sum\nolimits_{i=1}^k \lambda_i \bra{u\cdot x^i + v\cdot e^i} = \sigma_{Q\bra{\mathcal{C}}}\bra{u,v}.
\eeq
By the definition of $\sigma_{C^i}$ and the characterization of $\sigma_{Q\bra{\mathcal{C}}}$ for all $i\in \sidx{k}$
\beq\label{facelemma2}
u\cdot x^i + v\cdot e^i \leq \sigma_{C^i}\bra{u} +v\cdot e^i \leq \sigma_{Q\bra{\mathcal{C}}}\bra{u,v}.
\eeq
So if $\lambda_i>0$ in \eqref{facelemma1} for $i\in \sidx{k}$ then both inequalities in \eqref{facelemma2} hold as equalities for $i$. Then  \eqref{facelemma1} holds if and only if for all $i\in \sidx{k}$ with $\lambda_i>0$ we have (i) $u\cdot x^i=\sigma_{C^i}\bra{u}$ or equivalently $x^i\in F_{C^i}\bra{u}$, and (ii) $\sigma_{C^i}\bra{u}+ v\cdot e^i=\sigma_{Q\bra{\mathcal{C}}}\bra{{u,v}}$.\qed\end{proof}

\begin{proof}[of Proposition \ref{bdprop}]\label{proofofbdprop} Let $ E:=\{\bra{x,y}\in \Real^{n+k}\,:\, \sum\nolimits_{i=1}^k y_i=1,\quad Ax=\sum\nolimits_{i=1}^k A b^i y_i\}$. The inclusion $\aff\bra{Q\bra{\mathcal{C}}}\subseteq E$ follows by noting that  $\aff\bra{C^i}\subseteq \set{x\in \Real^n\,:\, Ax=Ab^i}$ and hence $\bra{x,\e^i}\in E$ for all $x\in C^i$. For the reverse inclusion let $\bra{x,y}\in E$ and $\bar{x}=\bra{x-\sum_{i=1}^k b^i y_i}$. Then $A\bar{x}={\bf 0}$, so $\bar{x}\in L\bra{\mathcal{C}}$ and hence there exist $\bar{x}^i\in L\bra{C^i}$ for $i\in \sidx{k}$ such that $\bar{x}=\sum_{i=1}^k \bar{x}^i$. For any $i\in \sidx{k}$ and $\lambda\neq 0$ we have $\bar{x}^i/\lambda\in L\bra{C^i}$, $\bar{x}^i/\lambda+b^i\in \aff\bra{C^i}$ and hence $\bra{\bar{x}^i/\lambda+b^i,\e^i}\in \aff\bra{C^i\times \set{\e^i}}\subseteq \aff\bra{Q\bra{\mathcal{C}}}$. In particular, for any any $i\in \sidx{k}$ we have $\bra{\bar{x}^i,{\bf 0}}=\bra{\bar{x}^i/2+b^i,\e^i}-\bra{-\bar{x}^i/2+b^i,\e^i}\in L\bra{Q\bra{\mathcal{C}}}$ and if $y_i\neq 0$ we have $\bra{\bar{x}^i/y_i+b^i,\e^i}\in \aff\bra{Q\bra{\mathcal{C}}}$. Then, letting $I_0=\set{i\in \sidx{k}\,:\, y_i=0}$ and $I_1=\sidx{n}\setminus I_0$ we have $x=\sum_{i\in I_1} y_i \bra{\bar{x}^i/y_i+b^i,\e^i} +\sum_{i\in I_0} \bra{\bar{x}^i,{\bf 0}} \in \aff\bra{Q\bra{\mathcal{C}}}$.

Then by Proposition~\ref{knwonbdlemma} we have $\relbd\bra{Q\bra{\mathcal{C}}}= \bigcup\nolimits_{\bra{u,v}\in L\bra{\mathcal{C}}\setminus\set{0}}F_{Q\bra{\mathcal{C}}}\bra{u,v}$.
The result will follow by refining the right hand side of this inclusion to include only the $F_{Q\bra{\mathcal{C}}}\bra{u,v}$ that are maximal with respect to inclusion.

We begin by showing that \eqref{relbdunion1} corresponds to the maximal faces when $u \in \mathcal{U}\bra{\mathcal{C}}$. Indeed, from Lemma~\ref{bdpropauxlemma} we only need to show that for all $\bar{u} \in \mathcal{U}\bra{\mathcal{C}}$ there exist $\bra{u,v}\in L\bra{\mathcal{C}}\setminus\set{0}$ such that $I\bra{u,v}=\sidx{k}$ and $F_{C^i}\bra{\bar{u}}=F_{C^i}\bra{u}$ for all $i\in \sidx{k}$. For that first let  $\bar{v}\in \Real^k$ be such that $\bar{v}_1= -\frac{1}{k}\sum_{i=2}^k \bra{\sigma_{C^1}(\bar{u})-\sigma_{C^i}(\bar{u})}$ and $\bar{v}_j= \sigma_{C^1}(\bar{u})-\sigma_{C^j}(\bar{u})  -\frac{1}{k}\sum_{i=2}^k \bra{\sigma_{C^1}(\bar{u})-\sigma_{C^i}(\bar{u})}$ for all $j\in \sidx{k}\setminus \set{1}$. Then, $I\bra{\bar{u},\bar{v}}=\sidx{k}$ and $\sum_{i=1}^k \bar{v}_i=0$. If $\bar{u}-\sum\nolimits_{i=1}^k  b^i \bar{v}_i\in L\bra{\mathcal{C}}$ we are done by letting $\bra{u,v}=\bra{\bar{u},\bar{v}}$. If not, there exist $w\in L\bra{\mathcal{C}}^\perp$ such that $u-\sum\nolimits_{i=1}^k  b^i v_i\in L\bra{\mathcal{C}}$ for $u=\bar{u}-w$ and $v=\bar{v}$. Now, for any $i\in \sidx{k}$ we have $w\in L\bra{\mathcal{C}}^\perp\subseteq L\bra{C^i}^\perp $ and hence by Proposition~\ref{knwonbdlemma} we have $F_{C^i}\bra{\bar{u}}=F_{C^i}\bra{\bar{u}-w}=F_{C^i}\bra{u}$. In particular, for all $i\in \sidx{k}$ there exist $x^i\in F_{C^i}\bra{u}$ such that  $\sigma_{C^i}(\bar{u}-w)=x^i\cdot \bra{\bar{u}-w} $ and $\sigma_{C^i}(\bar{u})=x^i\cdot \bar{u}$. Then   ${\sigma_{C^1}(\bar{u})-\sigma_{C^i}(\bar{u})}={\sigma_{C^1}(\bar{u}-w)-\sigma_{C^i}(\bar{u}-w)}$ for all $i\in \sidx{n}$ and hence $I\bra{{u},{v}}=\sidx{k}$ and $\sum_{i=1}^k {v}_i=0$ by the definition of $\bar{v}=v$ and $u=\bar{u}-w$.

We can also check that \eqref{relbdunion2} corresponds to the maximal faces exposed by $\bra{0,v}$ for $v\in\Real^k$, which are precisely those exposed when there exist $i\in \sidx{n}$ such that  $v_i=1-k$  and $v_j=1$ for $i\neq j$.

The last case is $u\in L\bra{\mathcal{C}}\setminus \set{0}$ and there exist $\emptyset\neq I\subseteq \sidx{k}$ such that $F_{C^i}\bra{u}=\emptyset$ for  $i\in I$ and $F_{C^i}\bra{u}\neq \emptyset$ for  $i\in \sidx{n}\setminus I$. An analog argument to  case $u \in \mathcal{U}\bra{\mathcal{C}}$ shows that the maximal faces here correspond to $\bra{u,v}\in L\bra{\mathcal{C}}\setminus\set{0}$ such that $I\bra{u,v}=\sidx{n}\setminus I$. However, those faces are contained in $ \conv\bra{\bigcup\nolimits_{j\neq i} C^j \times \set{\e^j}}$ for any $i\in I$, which are already included in \eqref{relbdunion2}.

The alternative characterizations for \eqref{relbdunion1}/\eqref{relbdunion2} follow from the fact that  $x\in F_C\bra{u}$ if and only if $u\in N_C\bra{x}$ (e.g. Proposition C.3.1.4 in \cite{hiriart-lemarechal-2001}).\qed
\end{proof}

\subsection{Proof of Proposition~\ref{embpart2}}

\begin{proof}[of Proposition~\ref{embpart2}]
Property \eqref{begingformulation} implies $\bigcup\nolimits_{i=1}^k C^i\times \set{e^i}\subseteq Q$ which shows $Q\bra{\mathcal{C}}\subseteq Q$,  $Q\bra{\mathcal{C}}_\infty \subseteq Q_\infty$ and \eqref{minkwoskioneway}. $Q\subseteq \Real^n\times \Delta^k$ implies  $Q_\infty \subseteq \Real^n\times \set{0}$ and $Q\bra{e^i}=C^i\times \set{e^i}$ further implies that $Q_\infty \subseteq C^1_\infty\times \set{0}=Q\bra{\mathcal{C}}_\infty$.

Part \ref{minkowski1} implies Part \ref{minkowski2} is direct from the definition of $Q\bra{\mathcal{C}}$, which together with \eqref{minkwoskioneway} shows their equivalence. Part \ref{minkowski2} implies \ref{minkowski4}  is direct.

For  \ref{minkowski4} implies \ref{minkowski1} we show that if $Q\bra{\mathcal{C}}\subsetneq Q$, then there exist $\tilde{x}\in \Real^n$ such that $\bra{\tilde{x},\frac{1}{k}\mathbf{1}}\in Q $ and $\tilde{x}\notin \frac{1}{k} \sum_{i=1}^k C^i$. For this we first claim that if $\bra{\bar{x},\bar{y}}\in Q\setminus Q\bra{\mathcal{C}}$ then there exist $a\in \Real^n$, $b\in \Real^k$ and $c\in \Real$ that satisfy the following three separation conditions: (i) $a\cdot \bar{x}+b\cdot \bar{y}> c$, (ii)$a\cdot x+b\cdot y\leq  c$ for all $\bra{x,y}\in Q\bra{\mathcal{C}}$, and (iii) for all  $i\in \sidx{k}$ and $\varepsilon>0$ there exist $\bar{x}^i\bra{\varepsilon}\in C^i$ such that $a\cdot \bar{x}^i\bra{\varepsilon}+b\cdot e^i\geq c-\varepsilon$. Indeed the first two  follow from the separation theorem for closed convex sets. If the third condition does not hold for some $i\in \sidx{k}$ then $\max\set{a\cdot x\,:\, x\in C^i}< c- b_i$ and because $\bar{y}\geq 0$ we can decrease $b_i$ to achieve the equality while still satisfying the first two conditions.

Now, because of \eqref{minkwoskioneway} for
$Q=Q\bra{\mathcal{C}}$ and separation condition (ii) we have
\beq\label{coldeq} a\cdot x+(1/k)\sum\nolimits_{i=1}^k b_i\leq  c \quad \forall x\in (1/k) \sum\nolimits_{i=1}^k C^i. \eeq
Additionally, because $\bar{y}\in \Delta^k$ there exist $\bra{\lambda_0,\lambda}\in \Delta^{k+1}$ with $\lambda_0>0$ such that $\lambda_0 \bar{y}+\sum_{i=1}^k \lambda_i e^i= \frac{1}{k} \mathbf{1}$. If  $\sum_{i=1}^k \lambda_i=0$, then $\lambda_0=1$, $\bar{y}=\frac{1}{k}\mathbf{1}$ and $\bra{\bar{x},\frac{1}{k}\mathbf{1}}\in Q $.  Hence, because of separation condition (i) and \eqref{coldeq} we have $\bar{x}\notin \frac{1}{k} \sum_{i=1}^k C^i$. If instead we have  $\sum_{i=1}^k \lambda_i>0$, then there exist $\varepsilon>0$ such that $\lambda_0\bra{a\cdot \bar{x}+b\cdot \bar{y}-c}/\bra{\sum_{i=1}^k \lambda_i}>\varepsilon$ because of separation condition (i). For such $\bra{\lambda_0,\lambda}$ and $\varepsilon$ let $\bra{\tilde{x},\tilde{y}}=\lambda_0 \bra{\bar{x},\bar{y}}+\sum_{i=1}^k \lambda_i \bra{\bar{x}^i\bra{\varepsilon},e^i}$. Because $ \bra{\bar{x}^i\bra{\varepsilon},e^i}\in Q$ for each $i\in \sidx{k}$ we then have that $\bra{\tilde{x},\frac{1}{k}\mathbf{1}}\in Q $. Furthermore, because separation conditions (i) and (iii), and the condition on $\varepsilon$ we have $a\cdot \tilde{x}+\frac{1}{k}\sum_{i=1}^k b_i > c$ and hence by \eqref{coldeq} we have $\tilde{x}\notin \frac{1}{k} \sum_{i=1}^k C^i$.
\qed
\end{proof}

\begin{acknowledgements}
This research was partially supported by NSF under grant CMMI-1351619. \blue{We thank two anonymous referees for their constructive comments that improved the paper's presentation.}
 \end{acknowledgements}

\bibliographystyle{spmpsci}

\begin{thebibliography}{10}
\providecommand{\url}[1]{{#1}}
\providecommand{\urlprefix}{URL }
\expandafter\ifx\csname urlstyle\endcsname\relax
  \providecommand{\doi}[1]{DOI~\discretionary{}{}{}#1}\else
  \providecommand{\doi}{DOI~\discretionary{}{}{}\begingroup
  \urlstyle{rm}\Url}\fi

\bibitem{andradas1994ubiquity}
Andradas, C., Ruiz, J.M.: Ubiquity of {\l}ojasiewicz's example of a nonbasic
  semialgebraic set.
\newblock The Michigan Mathematical Journal \textbf{41}, 465--472 (1994)

\bibitem{balas88}
Balas, E.: On the convex-hull of the union of certain polyhedra.
\newblock Operations Research Letters \textbf{7}, 279--283 (1988)

\bibitem{ben2001lectures}
Ben-Tal, A., Nemirovski, A.: Lectures on modern convex optimization: analysis,
  algorithms, and engineering applications.
\newblock Society for Industrial Mathematics (2001)

\bibitem{bestuzheva2016convex}
Bestuzheva, K., Hijazi, H., Coffrin, C.: Convex relaxations for quadratic
  on/off constraints and applications to optimal transmission switching
  (2016).
\newblock Optimization Online,
  \url{http://www.optimization-online.org/DB_HTML/2016/07/5565.html}.

\bibitem{blair90}
Blair, C.: Representation for multiple right-hand sides.
\newblock Math. Program. \textbf{49}, 1--5 (1990)

\bibitem{blekherman2013semidefinite}
Blekherman, G., Parrilo, P., Thomas, R.: Semidefinite Optimization and Convex
  Algebraic Geometry.
\newblock MPS-SIAM Series on Optimization. SIAM (2013)

\bibitem{lodi15}
Bonami, P., Lodi, A., Tramontani, A., Wiese, S.: On mathematical programming
  with indicator constraints.
\newblock Math. Program. \textbf{151}, 191--223 (2015)

\bibitem{springerlink:10.1007/s101070050106}
Ceria, S., Soares, J.: Convex programming for disjunctive convex optimization.
\newblock Math. Program. \textbf{86}, 595--614 (1999)

\bibitem{gunluk2010perspective}
G{\"u}nl{\"u}k, O., Linderoth, J.: Perspective reformulations of mixed integer
  nonlinear programs with indicator variables.
\newblock Math. Program. \textbf{124}, 183--205 (2010)

\bibitem{Helton}
Helton, J.W., Nie, J.: Sufficient and necessary conditions for semidefinite
  representability of convex hulls and sets.
\newblock {SIAM} Journal on Optimization \textbf{20}, 759--791 (2009)

\bibitem{Hijazi}
Hijazi, H., Bonami, P., Cornu{\'e}jols, G., Ouorou, A.: Mixed-integer nonlinear
  programs featuring ``on/off'' constraints.
\newblock Comput. Optim. Appl. \textbf{52}, 537--558 (2012)

\bibitem{Hijazioptonline14}
Hijazi, H., Bonami, P., Ouorou, A.: A Note on Linear On/Off Constraints
  (2014).
\newblock Optimization Online,
  \url{http://www.optimization-online.org/DB_HTML/2014/04/4309.html}.


\bibitem{hijazi2013convex}
Hijazi, H., Coffrin, C., Van~Hentenryck, P.: Convex quadratic relaxations for
  mixed-integer nonlinear programs in power systems.
\newblock Math. Program. Comput. \textbf{9}, pp. 321--367 (2017)

\bibitem{hiriart-lemarechal-2001}
Hiriart-Urruty, J.B., {Lemar\'echal}, C.: Fundamentals of Convex Analysis.
\newblock Springer Verlag, Heidelberg (2001)

\bibitem{caytrick}
Huber, B., Rambau, J., Santos, F.: The cayley trick, lifting subdivisions and
  the bohne-dress theorem on zonotopal tilings.
\newblock J. Eur. Math. Soc. \textbf{2}, 179--198 (2000)

\bibitem{jeroslow88}
Jeroslow, R.G.: A simplification for some disjunctive formulations.
\newblock European Journal of Operational Research \textbf{36}, 116--121 (1988)

\bibitem{karavelas2013maximum}
Karavelas, M.I., Konaxis, C., Tzanaki, E.: The maximum number of faces of the
  minkowski sum of three convex polytopes.
\newblock In: G.D. da~Fonseca, T.~Lewiner, L.M. Pe{\~{n}}aranda, T.M. Chan,
  R.~Klein (eds.) Symposuim on Computational Geometry 2013, SoCG '13, Rio de
  Janeiro, Brazil, June 17-20, 2013, pp. 187--196. {ACM} (2013)

\bibitem{DBLP:conf/ipco/LubinYBV16}
Lubin, M., Yamangil, E., Bent, R., Vielma, J.P.: Extended formulations in
  mixed-integer convex programming.
\newblock In: Q.~Louveaux, M.~Skutella (eds.) Integer Programming and
  Combinatorial Optimization - 18th International Conference, {IPCO} 2016,
  Li{\`{e}}ge, Belgium, June 1-3, 2016, Proceedings, \emph{LNCS}, vol. 9682,
  pp. 102--113. Springer (2016)

\bibitem{lubin2016polyhedral}
Lubin, M., Yamangil, E., Bent, R., Vielma, J.P.: Polyhedral approximation in
  mixed-integer convex optimization
\newblock Math. Program., to appear. (2017). DOI:10.1007/s10107-017-1191-y

\bibitem{rockafellar2015convex}
Rockafellar, R.: Convex Analysis.
\newblock Princeton Landmarks in Mathematics and Physics. Princeton University
  Press (2015)

\bibitem{springerlink:10.1007/s101070050103}
Stubbs, R.A., Mehrotra, S.: A branch-and-cut method for 0-1 mixed convex
  programming.
\newblock Math. Program. \textbf{86}, 515--532 (1999)

\bibitem{mohit1}
Tawarmalani, M.: Inclusion certificates and simultaneous convexification of
  functions  (2010).
\newblock Optimization Online,
  \url{http://www.optimization-online.org/DB_HTML/2010/09/2722.html}.

\bibitem{mohit2}
Tawarmalani, M., Richard, J., Chung, K.: Strong valid inequalities for
  orthogonal disjunctions and bilinear covering sets.
\newblock Math. Program. \textbf{124}, 481--512 (2010)

\bibitem{Embedding}
Vielma, J.P.: Embedding formulations and complexity for unions of polyhedra.
\newblock Management Science, to appear. (2017) DOI:10.1287/mnsc.2017.2856

\bibitem{Mixed-Integer-Linear-Programming-Formulation-Techniques}
Vielma, J.P.: {Mixed integer linear programming formulation techniques}.
\newblock SIAM Review \textbf{57}, 3--57 (2015)

\bibitem{DBLP:journals/mp/VielmaN11}
Vielma, J.P., Nemhauser, G.L.: Modeling disjunctive constraints with a
  logarithmic number of binary variables and constraints.
\newblock Math. Program. \textbf{128}, 49--72 (2011)

\bibitem{WeibelPhd}
Weibel, C.: Minkowski sums of polytopes: Combinatorics and computation.
\newblock Ph.D. thesis, {\'E}cole Polytechnique F{\'e}d{\'e}rale de Lausanne
  (2007)

\end{thebibliography}

\end{document}